\newtheorem{theorem}{\bf Theorem}[section]
\newtheorem{lemma}[theorem]{Lemma}
\newtheorem{coro}{\bf Corollary}[section]
\newtheorem{assum}{\bf Assumption}[section]
\newtheorem{rem}{\bf Remark}[section]
\newlength{\kaka}
\newcommand{\ahref}[2]{}
\newcommand{\obs}{^{\text{obs}}}
\newcommand{\mD}{\mathcal{D}}
\newcommand{\mE}{\mathcal{E}}
\newcommand{\beq}{\begin{equation}}
\newcommand{\eeq}{\end{equation}}
\newcommand{\lb}{\label}
\newcommand{\bea}{\begin{eqnarray}}
\newcommand{\eea}{\end{eqnarray}}
\newcommand{\bxr}{\begin{array}}
\newcommand{\exr}{\end{array}}
\newcommand\exs{\hspace*{0.4mm}}
\newcommand\xxs{\hspace*{0.2mm}}
\newcommand\nxs{\hspace*{-0.2mm}}
\newcommand{\norms}[1]{\parallel\! #1 \!\parallel}
\newcommand{\bfG} {\boldsymbol{G}}
\newcommand{\bSig} {\boldsymbol{\Sigma}}
\newcommand{\bC} {\boldsymbol{C}}
\newcommand{\bK} {\boldsymbol{K}}
\newcommand{\bR} {\boldsymbol{\sf R}}
\newcommand{\btau} {\boldsymbol{\tau}}
\newcommand{\bn} {\boldsymbol{n}}
\newcommand{\ba} {\boldsymbol{a}}
\newcommand{\bx} {\boldsymbol{x}}
\newcommand{\by} {\boldsymbol{y}}
\newcommand{\bI} {\boldsymbol{I}}
\newcommand{\sip} {\!\cdot\!}
\newcommand{\R}{\mathbb{R}}
\newcommand{\OOd}{{\Omega}}
\newcommand{\bzero}{\boldsymbol{0}}
\newcommand{\bu} {\boldsymbol{u}}
\newcommand{\bt} {{\boldsymbol{t}}}
\newcommand{\bv} {\boldsymbol{v}}
\newcommand{\bxi} {\boldsymbol{\xi}}
\newcommand{\bw} {\boldsymbol{w}}
\newcommand{\bPsi}{\boldsymbol{\Psi}}
\begin{document}

\title{\bf Ultrasonic imaging in highly heterogeneous backgrounds}

\author[1,2]{Fatemeh Pourahmadian\thanks{Corresponding author email: fatemeh.pourahmadian@colorado.edu}}
\author[3]{Houssem Haddar}
\affil[1]{Department of Civil, Environmental \& Architectural Engineering, University of Colorado Boulder, USA}
\affil[2]{Department of Applied Mathematics, University of Colorado Boulder, USA}
\affil[3]{INRIA, Center of Saclay Ile de France and UMA, ENSTA Paris Tech, Palaiseau Cedex, FRANCE}
\date{} 
\renewcommand\Affilfont{\itshape\small}

\maketitle

\vspace{-10mm}
\begin{abstract}
 
 This work formally investigates the differential evolution indicators as a tool for ultrasonic tracking of elastic transformation and fracturing in randomly heterogeneous solids. Within the framework of periodic sensing, it is assumed that the background at time $t_\circ$ contains \emph{(i)}~a multiply connected set of viscoelastic, anisotropic, and piece-wise homogeneous inclusions, and \emph{(ii)}~a union of possibly disjoint fractures and pores. The support, material properties, and interfacial condition of scatterers in \emph{(i)} and \emph{(ii)}  are unknown, while elastic constants of the matrix are provided. The domain undergoes progressive variations of arbitrary chemo-mechanical origins such that its geometric configuration and elastic properties at future times are distinct. At every sensing step $t_\circ, t_1, \ldots$, multi-modal incidents are generated by a set of boundary excitations, and the resulting scattered fields are captured over the observation surface. The test data are then used to construct a sequence of wavefront densities by solving the spectral scattering equation. The incident fields affiliated with distinct pairs of obtained wavefronts are analyzed over the stationary and evolving scatterers for a suit of geometric and elastic evolution scenarios entailing both interfacial and volumetric transformations. The main theorem establishes the invariance of pertinent incident fields at the loci of static fractures and inclusions between a given pair of time steps, while certifying variation of the same fields over the modified regions. These results furnish a basis for theoretical justification of differential evolution indicators for imaging in complex composites which, in turn, enable the exclusive tomography of evolution in a background endowed with many unknown features.                          
 
\end{abstract}


\section{Introduction}

Many critical components in aerospace structures and energy systems are comprised of highly heterogeneous composites~\cite{pokh2017,bond2015}. Examples include (a) single- and polycrystalline superalloys deployed in aeroengine and gas turbine blades~\cite{bali2022},~(b) interpenetrating phase metamaterials such as SiC-SiC composites used in accident-tolerant nuclear fuel claddings~\cite{arre2022,terr2018}, and (c) multifunctional polymer matrix composites with a wide spectrum of applications thanks to their exceptional mechanical properties~\cite{del2021,van2018}. The topology and characteristics of such materials at micro- and meso- scales are often unknown, or only known to a limited extent because of variabilities in the manufacturing process~\cite{meier2019, netz2022} and/or aging~\cite{yan2018}. In addition, mechanisms of deterioration via corrosion, fatigue, irradiation, and thermal cycling are yet to be fully understood. These processes, however, are responsible for continuous microstructural evolution leading to inevitable development of micro/macro cracks and volumetric damage zones which may result in the loss of functional performance in key components~\cite{beau2017,roy2017}. 

Recent developments in sensing technology have resulted in a suit of imaging solutions germane to complex environments~\cite{amin2017,faria2022,zhao2018, garc2018,bych2018,zhan2022,bakre2022,zhan2020}. State-of-the-art examples include:~penetrating-radar techniques~\cite{amin2017}, infrared thermography~\cite{faria2022}, laser shearography~\cite{zhao2018}, X-ray computed tomography~\cite{garc2018}, acoustic tomography~\cite{bych2018}, ultrasonic surface wave methods~\cite{zhan2022}, nonlinear ultrasound~\cite{bakre2022}, and laser ultrasonic imaging~\cite{zhan2020}. Among which, ultrasonic sensing often emerges as the preferred (or the only feasible) imaging modality in many applications. Laser ultrasonics~\cite{wilc2020,zhan2019,davi1993}, in particular, has come under the spotlight for enabling non-contact actuation and measurement that is crucial for high-fidelity in-situ monitoring of fabrication processes and advanced manufacturing~\cite{zime2021,zhan2020, liu2022}. 

Existing approaches to ultrasonic waveform inversion mostly rely on~{(a)}~distinct patterns in the measured scattered field associated with certain modes of propagation,~{(b)}~specific sensing configurations, and~(c)~major postulates on the nature of wave motion in the background which, by and large, forgo the uncertain (yet important) scattering signatures affiliated with the specimen's microstructure. Such attributes expedite the data processing, yet entail the following impediments:~{(i)}~unstable reconstructions featuring many artifacts,~{(ii)}~significant errors in heterogeneous and anisotropic backgrounds where multiple scattering generates remarkable wave dispersion and attenuation,~{(iii)}~major restrictions on the geometry of incident and/or measurement grids,~{(iv)}~limited scalability beyond the controlled laboratory environment. Thus, there is a critical need for next-generation imaging solutions that carefully integrate state-of-the-art instrumentation and advanced data analytic solutions to enable fast (yet robust) ultrasonic tomography of complex processes in uncertain or unknown environments.  

Ongoing efforts in this vein are mainly focused on~(a) optimization-based full-waveform inversion, and~(b) machine learning (ML). Inverse algorithms in (a) have so far been associated with tardy reconstructions due to their high computational cost. Lately, a few studies showed that the latter may be addressed by leveraging deep learning solutions pertinent to partial differential equations such as physics-informed neural networks~\cite{chen2020,chen2022}. However, the majority of paradigms in (b) make use of ML principles within the framework of existing logics for ultrasonic imaging so that the above-mentioned barriers are not fundamentally resolved. Nonetheless, ML schemes are shown to facilitate the implementation of various imaging solutions, and may serve as effective post-processing tools for image enhancement~\cite{van2019,cant2022,arin2020}.

In applied mathematics, in parallel, over two decades of research in inverse scattering and transmission eigenvalues has given rise to a suit of rigorous algorithms for non-iterative waveform inversion~\cite{cako2020,cako2016,audi2017,guzi2015,bonn2019}. Recently, sampling-based approaches to inverse scattering have been applied to laser-ultrasonic test data~\cite{Venk2022}. The results demonstrate superior reconstructions in terms of quality and resolution compared to conventional methods. In addition, introduction of the differential evolution indicators~\cite{AudiDLSM} as a tool for waveform tomography in unknown media showcase a unique opportunity to achieve the above-mentioned goal for real-time, laser-based monitoring. The differential indicators have so far been theoretically established for (i) acoustic imaging of obstacles in periodic and random media~\cite{nguy2020,audi2021}, and (ii) elastic-wave imaging of fractures in monolithic solids~\cite{pour2020,pour2021(3)}. A rigorous justification of this imaging modality for its potentially dominant field of application i.e., ultrasonic tomography in complex composites -- where the background features a random distribution of heterogeneities and discontinuities of unknown support and material characteristics which are subject to both interfacial and volumetric evolution, is still lacking. The present study is an effort toward establishing the differential evolution indicators in the general case of solids. In this vein, a special attention is paid to pose the forward scattering problem in a broad sense by taking advantage of contributions on the nature of transmission eigenvalues in elastodynamics~\cite{char2002, char2002(2), char2008, char2006, bell2010, bell2013,cako2021}. 

More specifically, the direct problem formulates sequential experiments conducted on a randomly structured composite such that at every sensing step $t_\circ, t_1, \ldots$, the specimen features an arbitrary networks of pores and fractures along with a set of viscoelastic, anisotropic, and heterogeneous inclusions embedded in an elastic matrix. Properties of the binder are assumed to be known, while the support, material properties, and interfacial condition of scatterers are a-priori unknown and subject to spatiotemporal evolution. In this setting, boundary excitations at every time step give rise to distinct scattering footprints on the measurement surface. The idea is to use the sequence of scattered field measurements to design an imaging functional endowed with appropriate invariance with respect to stationary scatterers between any pairs of time steps such that the associated reconstructions uniquely expose the support of mechanical evolution in a given timeframe without the need to image the entire domain which may be insurmountable. To this end, the conditions for wellposedness of the forward problem are identified. The set of spectral scattering equations is then defined for waveform inversion. At every time step, the affiliated scattering operator and its related properties are carefully analyzed in order to construct a sequence of approximate solutions to the scattering equation with strong convergence characteristics under a certain condition. The obtained solutions i.e., wavefront densities are then used to specify a set of incident fields over a generic model of the background, which forms the basis for differential imaging indicators. Next, the incidents corresponding to distinct pairs of wavefronts are analyzed over the stationary and evolving scatterers for a suit of geometric and elastic evolution configurations. In the general case of solids, the latter involves a number of novel scenarios including (a) fracturing at bimaterial interfaces, (b) elastic transformation and/or expansion of fractured inclusions, and (c) elastic conversion of microcracked damage zones. The main theorem establishes the invariance of incident fields at the loci of stationary fractures and inclusions, while certifying variation of the same fields over the evolved regions. These results pave the way for differential tomography of evolution in unknown backgrounds.              
 
In the sequel, \textcolor{black}{Section}~\ref{S1} provides the geometric and mechanistic description of the direct scattering problem. \textcolor{black}{Section}~\ref{Sc1} specifies the fundamental properties of scattering operators. The main theorems in \textcolor{black}{Section}~\ref{DEI} establish the behavior of differential indicators in the case of composites. \textcolor{black}{Section}~\ref{IR} is dedicated to a numerical implementation and discussion of the results.

\section{Preliminaries}\label{S1}

Consider periodic illumination of mechanical evolution in a randomly structured composite shown in Fig.~(\ref{comp_ps}). At the first sensing step $t = t_{\circ}$,  the specimen $\mathcal{B} \subset \R^3$ is comprised of~(i) a linear, elastic, isotropic and homogeneous binder of mass density $\rho \in \R$ and Lam\'{e} parameters $\mu, \lambda \in \R$,~(ii) a union of bounded inclusions $\mD_\circ^\star \cup \mD_\circ^o = \mD_\circ \subset \mathcal{B}$ with Lipschitz boundaries composed of penetrable $\mD_\circ^\star$ and impenetrable $\mD_\circ^o$ components where in the former case, the inclusions may be viscoelastic, anisotropic, and multiply connected, and~(iii) a network of discontinuity surfaces $\Gamma_{\circ} \subset \mathcal{B}$ characterized by the complex-valued and heterogeneous interfacial stiffness matrix $\bK_{\circ}(\bxi)$ where $\bxi \in \Gamma_{\circ}$ is the position vector. The specimen may be exposed to irradiation or chemical reactions as common producers of interfacial damage~\cite{lach2022}, and/or subject to thermal cycling, fatigue, and shock-waves which are mostly responsible for volumetric degradation~\cite{lieo2021} so that at any future sensing steps $t = t_i \nxs> t_\circ$, $i \in \mathbb{N}$, the domain $\mathcal{B}$ features an evolved set of inclusions $\mD_i^\star \cup \mD_i^o = \mD_i \subset \mathcal{B}$ and interfaces $\Gamma_{i} \subset \mathcal{B}$ such that $\mD_{i-1} \subset \mD_{i}$ and $\Gamma_{i-1}\!\setminus\nxs \overline{\mD^o_{i}} \subset \Gamma_{i}$, $\forall i \in \mathbb{N}$. For further clarity, let $\rho_\kappa = \rho_\kappa(\bxi) >0$ and $\bC_\kappa = \bC_\kappa(\bxi)$, $\kappa = \circ, 1, \ldots $, designate the mass density and (complex-valued) viscoelasticity tensor associated with the penetrable obstacles $\mD_\kappa^\star$ at $t_{\kappa}$. Here, $(\bC_\kappa, \rho_\kappa)$ are understood in a piecewise-constant sense i.e.,~$\mD_\kappa^\star$ can be decomposed into $N_\kappa^\star$ open, simply connected, and non-overlapping subsets $\mD_\kappa^n \subset \mD_\kappa^\star$ (of Lipschitz boundaries) where both $\rho_\kappa$ and $\bC_\kappa$ are constants $\forall \bxi \in \mD_\kappa^n$, and $\overline{\mD_\kappa^\star} = {\textstyle \bigcup_{n \exs =1}^{N_\kappa^\star}} \overline{\mD_\kappa^n}$. It should be noted that $\forall t_\kappa$, $\mD_\kappa^\star$ and $\mD_\kappa^o$ are assumed to be disjoint i.e.,~$\overline{\mD_\kappa^\star} \cap \overline{\mD_\kappa^o} = \emptyset$. In addition, the support of $\Gamma_{\kappa}$ may be decomposed into $N_\kappa$ smooth open subsets $\Gamma_n \subset \Gamma_{\kappa}$, $n=1,\ldots, N_\kappa$, such that $\Gamma_{\kappa} = {\textstyle \bigcup_{n \exs=1}^{N_\kappa}} \Gamma_n$. The support of $\Gamma_n$ may be arbitrarily extended to a closed Lipschitz surface $\partial D_n$ of a bounded simply connected domain $D_n$, so that the unit normal vector $\bn$ to $\Gamma_n$ coincides with the outward normal vector to $\partial D_n$. We assume that $\Gamma_n$ is an open set (relative to $\partial D_n$) with a positive surface measure. 

In this setting, let ${\mE}^\star_{i} \cup {\mE}^o_{i}$ and $\hat\Gamma_i \cup \tilde\Gamma_i$ respectively specify the support of volumetric and interfacial evolution within $[t_{i-1},\,t_i]$ for $i \in \mathbb{N}$ such that 
\beq \lb{HT}
\begin{aligned}
&\overline{\hat{\Gamma}_{i}} \, \colon\!\!\!= \overline{{\Gamma}_{i}\nxs\setminus\nxs\overline{{\Gamma}_{i-1}}}, \quad\,\,\, \overline{\tilde{\Gamma}_{i}} \, \colon\!\!\!= \overline{\big\lbrace \bxi \in {\Gamma}_{i-1}\!\setminus\nxs \overline{\mD^o_{i}}: \,\,\, \bK_{i-1}(\bxi) \exs \neq\, \bK_{i}(\bxi) \big\rbrace}, \\
& \overline{{\mE}^o_{i}} \,\xxs \colon\!\!\!=\, \overline{\mD_i^o\nxs\setminus\nxs\overline{\mD_{i-1}^o}}, \quad \overline{{\mE}^\star_{i}} \,\xxs \colon\!\!\!=\, \overline{\text{supp}(\bC_{i} - \bC_{i-1}) \exs\cup\, \text{supp}(\rho_{i} - \rho_{i-1})},  \quad i = 1,2,\ldots.
\end{aligned}
\eeq   

 At $t = t_{\circ}$, the specimen's external boundary $\partial \mathcal{B}$ and the binder's constants $(\lambda, \mu, \rho)$ are known which may be used to define a \emph{baseline model} of the background. On the other hand, the support of pre-existing scatterers $\mD_\circ \cup \Gamma_{\circ}$ and their designated properties $(\bC_\circ, \rho_\circ, \bK_\circ)(\bxi)$ are unknown. Given sequential sensory data at $t_{i-1}$ and $t_i$ for $i \in \mathbb{N}$, the objective is to reconstruct the support of volumetric and interfacial evolution $\mE^\star_i \cup \mE^o_i \cup \hat\Gamma_i \cup \tilde\Gamma_i$ defined by~\eqref{HT}.

\begin{figure}[tp!]
\vspace*{-0mm}
\begin{center} 
\includegraphics[width=0.8\linewidth]{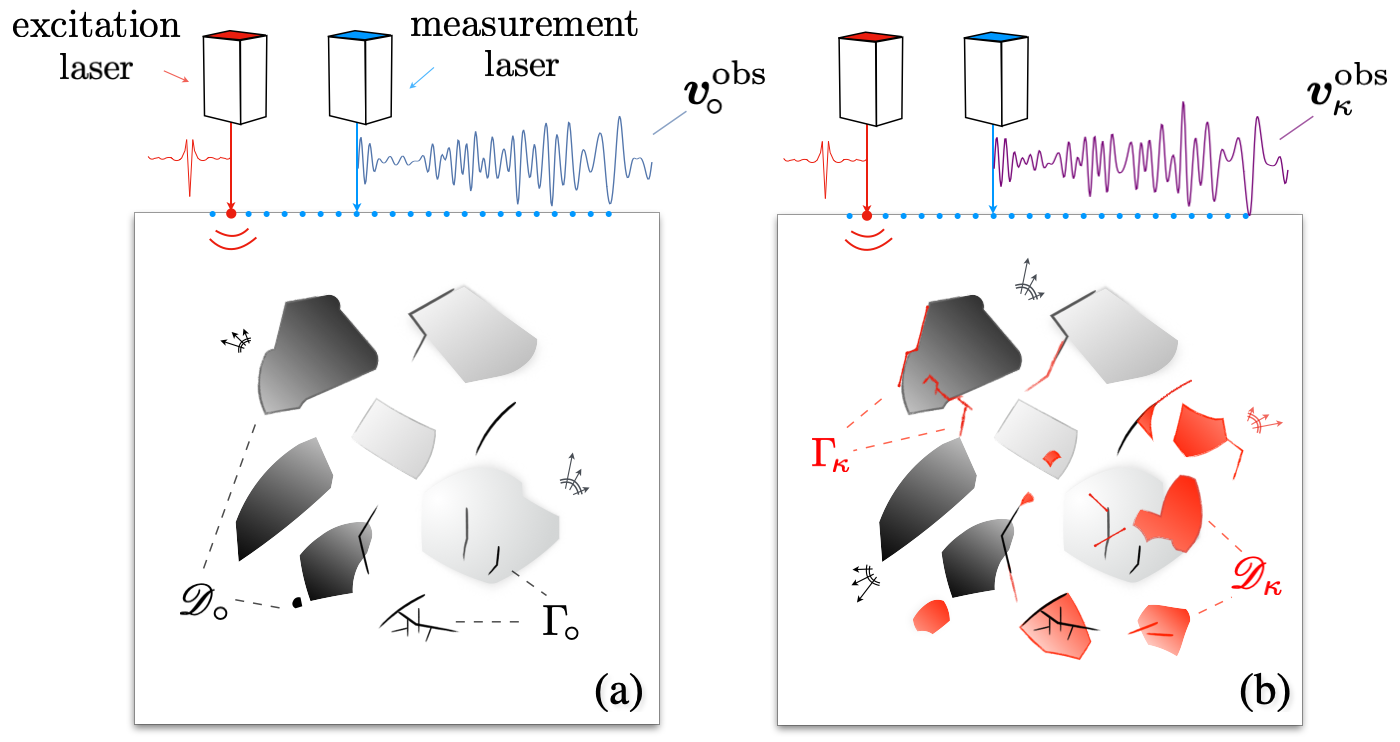}
\end{center} \vspace*{-6.5mm}
\caption{Ultrasonic sensing of evolution in a heterogeneous composite featuring a network of arbitrary inclusions, fractures, and pores of unknown distribution, elastic properties, and interfacial conditions: {\bf(a)}~{\emph{primary experiments}} conducted at $t \nxs=\nxs t_\circ$ when a set of boundary excitations, interacting with the unknown pre-existing scatterers $\mD_\circ \cup \Gamma_{\circ}$, induce the scattered field $\bv_{\nxs\circ}{\!\!\nxs\obs}\!$ on the observation surface, and~{\bf(b)}~{\emph{secondary experiments}} performed sequentially at later times $t_i \!=\! \lbrace t_1, t_2, ... \rbrace$ when new and evolved scatterers $\mD_\kappa \cup \Gamma_{\kappa}$ lead to distinct waveform measurements $\bv_{\kappa}{\!\!\!\nxs\obs}$.}
\label{comp_ps}\vspace*{-5.0mm}
\end{figure}


\begin{assum}\label{tfc}
Let $\Gamma^{o}_{\kappa} \subset \Gamma_{\kappa}$ denote the union of traction-free cracks at $t_\kappa$ such that 
\beq \nonumber
\overline{{\Gamma}_{\kappa}^{o}} \, \colon\!\!\!= \overline{\big\lbrace \bxi \in {\Gamma}_{\kappa}: \,\,\, \bK_{\kappa}(\bxi) \exs =\, \bzero \big\rbrace}, \qquad \kappa = \circ,1,\ldots,
\eeq
then $\mathcal{B} \nxs\setminus\!  \overline{\mD^{o}_{\kappa} \cup \Gamma^{o}_{\kappa}} $ remains connected $\forall t_\kappa$.
\end{assum}
 
\begin{assum}\label{CP}
\vspace*{-1mm}
Let $\Re(\cdot)$ and $\Im(\cdot)$ respectively denote the real and imaginary parts of a complex-valued quantity, and recall that the fourth-order tensor $\bC_\kappa(\bxi)$ represents the viscoelastic and anisotropic behavior of inclusions $\mD_\kappa^\star$ at $t_\kappa$. Then, the real part of $\bC_\kappa$ is bounded by piecewise-constant and strictly positive functions $\text{\sf c}_\kappa$ and $\text{\sf C}_\kappa$, while the magnitude of its imaginary part is constrained by piecewise-constant and non-negative functions $\text{\sf v}_\kappa$ and $\text{\sf V}_\kappa$ such that $\forall \boldsymbol{\Upphi} \in \mathbb{C}(\mD_\kappa^\star)^{3\times3\nxs}$,
\beq \lb{CkP}
\!\left\{\begin{array}{l}
\!\!\text{\sf c}_\kappa {|}\boldsymbol{\Upphi} {|}^2 \,\leqslant\, \Re(\boldsymbol{\Upphi} \exs\colon\nxs \bC_\kappa \colon \overline{\boldsymbol{\Upphi}}) \,\leqslant\, \text{\sf C}_\kappa {|} \boldsymbol{\Upphi} {|}^2 \qquad \text{in  $\mD_\kappa^\star$}  \\*[1.5mm]
\!\!\text{\sf v}_\kappa {|}\boldsymbol{\Upphi} {|}^2 \,\leqslant\, -\Im(\boldsymbol{\Upphi} \exs\colon\nxs \bC_\kappa \colon \overline{\boldsymbol{\Upphi}}) \,\,\leqslant\, \text{\sf V}_\kappa {|} \boldsymbol{\Upphi} {|}^2 \quad\, \text{in  $\mD_\kappa^\star$}
\end{array}\right.,
 \qquad \kappa = \circ, 1, \ldots.
\eeq   

Also, the interfacial stiffness matrix $\bK_{\nxs\kappa} \in L^\infty(\Gamma_\kappa)^{3\times 3}\!$ is symmetric $\forall \kappa$, while satisfying \mbox{$\overline{\boldsymbol{\varphi}} \cdot \Im \bK_{\nxs\kappa}(\bxi)\cdot \boldsymbol{\varphi} \leqslant 0$, $\forall \boldsymbol{\varphi}\in\mathbb{C}(\Gamma_\kappa)^3$}. 
\end{assum}

\begin{assum}\label{ITP_A1}
\vspace*{-1mm}
Let $\bC = \lambda\bI_2\!\otimes\!\bI_2 + 2\mu\bI_4$ denote the binder's fourth-order elasticity tensor, wherein $\bI_m \,(m\!=\!2,4)$ designates the $m$th-order symmetric identity tensor. Then, given the Poisson's ratio $\nu = \frac{\lambda}{2(\lambda+\mu)}$ and ${\mathcal{B}}^-_\kappa \colon \!\!\! = {\mathcal{B}} \exs \backslash \overline{\mD_\kappa \nxs\cup \Gamma_{\kappa}}$, observe that $\forall \xxs \boldsymbol{\Uppsi} \in \mathbb{C}({\mathcal{B}}^-_\kappa)^{3\times3\nxs}$, 
\beq\label{Clim}\nonumber
\!\left\{\begin{array}{l}
\!\!2\mu {|}\boldsymbol{\Uppsi} {|}^2 \,\leqslant\, \Re(\boldsymbol{\Uppsi} \colon\! \bC \colon \overline{\boldsymbol{\Uppsi}}) \,\leqslant\, (3\lambda+2\mu) {|} \boldsymbol{\Uppsi} {|}^2 \qquad \text{\,\,\, for \, $ 0 \,<\, \nu \,<\, \frac{1}{2}$} \\*[1.5mm]
\!\!(3\lambda+2\mu) {|}\boldsymbol{\Uppsi} {|}^2 \,\leqslant\, \Re(\boldsymbol{\Uppsi} \colon\! \bC \colon \overline{\boldsymbol{\Uppsi}}) \,\leqslant\, 2\mu  {|} \boldsymbol{\Uppsi} {|}^2 \qquad \text{for \, $ -1 \,<\, \nu \,<\, 0$}
\end{array}\right.,
\quad \Im(\boldsymbol{\Uppsi} \colon\! \bC \colon \overline{\boldsymbol{\Uppsi}}) ~=~ 0.
\vspace*{0.5mm}
\eeq
Next, invoke $\mD_\kappa^n \subset \mD_\kappa^\star$ with constant $(\bC_\kappa, \rho_\kappa)$, and let $\text{\sf c}_\kappa^n$, $\text{\sf C}_\kappa^n$, $\text{\sf v}_\kappa^n$, and $\text{\sf V}_\kappa^n$ represent the respective values of $\text{\sf c}_\kappa$, $\text{\sf C}_\kappa$, $\text{\sf v}_\kappa$, and $\text{\sf V}_\kappa$ in each $\mD_\kappa^n$, $n \in \lbrace 1,\ldots, N^\star_\kappa \rbrace$ and $\kappa \in \lbrace \circ, 1, \ldots \rbrace$. Then, in light of~\cite{bell2010}, $\forall\kappa$ 
\beq\label{ITP_A2}\nonumber
\big( \, \rho \,<\, \rho_\kappa \,\,\land\,\, \max\lbrace 3\lambda+2\mu, 2\mu \rbrace  \,<\, \min\lbrace \text{\sf c}_\kappa^n \rbrace \exs \big)  \,\lor\, \big( \,  \rho \,>\, \rho_\kappa \,\,\land\,\, \min\lbrace 3\lambda+2\mu, 2\mu \rbrace  \,>\, \max\lbrace \text{\sf c}_\kappa^n \rbrace \exs \big).
\eeq
\end{assum}

\paragraph*{Experiments.}
The domain $\mathcal{B}$ is subject to periodic inspections at time steps $t_\kappa = \lbrace t_\circ, t_1, \ldots \rbrace$. At every $t_\kappa$, the specimen is excited by a combination of ultrasonic sources on its external boundary $\partial \mathcal{B}_t$ so that the corresponding incident field $\bu^{\textrm{f}} \in H^1(\mathcal{B})^3$ in the baseline model is governed by 
\beq\lb{uf}
\begin{aligned}
&\nabla \exs\sip\exs \bC \exs \colon \! \nabla \bu^{\textrm{f}}(\bxi) \,+\, \rho \exs \omega^2 {\bu}^{\textrm{f}}(\bxi) ~=~ \bzero, \quad & \bxi \in {\mathcal{B}}, \\*[0.0mm]
&\bn \exs\sip\exs \bC \exs \colon \!  \nabla  \bu^{\textrm{f}}(\bxi)~=~\btau(\bxi),  \quad & \bxi \in \partial {\mathcal{B}}_t, \\*[0.0mm]
& \bu^{\textrm{f}}(\bxi)~=~\bzero,   \quad &\bxi \in {\partial\mathcal{B} \nxs \setminus \!{\partial {\mathcal{B}}_t}}.
\end{aligned}   
\eeq  

Here, the illumination frequency $\omega>0$ is selected such that the shear wavelength $\lambda_s = 2 \pi \sqrt{\mu\exs /(\rho \omega^2)}$ is sufficiently smaller than the characteristic length scale of the sought-for objects; $\bn$ is the unit outward normal to the sample's boundary $\partial \mathcal{B}$; $\btau(\bxi)$ represents the external traction on the Neumann part of the boundary $\partial {\mathcal{B}}_t \subset \partial {\mathcal{B}}$ which includes the source input. It is assumed that $\text{supp}(\partial\mathcal{B} \setminus \overline{\partial {\mathcal{B}}_t}) = \emptyset$, i.e.,~a set of fixed boundary points of zero surface measure prevent the rigid body motion. Henceforth, the homogeneous Dirichlet part of the boundary will be implicitly indicated. At every sensing step $t_\kappa$, the interaction of $\bu^{\textrm{f}}$ with the hidden scatterers $\Gamma_{\kappa} \cup \mD_\kappa$ gives rise to the total field $(\bu^\kappa, \bw^\kappa) \in H^1({\mathcal{B}}^-_\kappa)^3 \nxs\times\nxs H^1(\mD^\star_\kappa \! \setminus \! \overline{\Gamma_{\kappa}})^3$ satisfying      
\beq\lb{uk}
\begin{aligned}
&\nabla \exs\sip\exs \bC \exs \colon \! \nabla \bu^\kappa(\bxi) \,+\, \rho \exs \omega^2 {\bu}^\kappa(\bxi) ~=~ \bzero, \quad &  \bxi \exs \in {\mathcal{B}}^-_\kappa, \\*[0.0mm]
&\nabla \exs\sip\exs [\exs \bC_\kappa(\bxi) \exs \colon \! \nabla \bw^\kappa\exs](\bxi) \,+\, \rho_\kappa(\bxi) \exs \omega^2 {\bw}^\kappa(\bxi) ~=~ \bzero, \quad &  \bxi \exs \in \mD^\star_\kappa  \setminus \overline{\Gamma_{\kappa}},   \\*[0.2mm]
&\bt[ \bu^\kappa](\bxi)~=~\bt[ \bw^\kappa](\bxi),  \,\,\, \bu^\kappa(\bxi)~=~\bw^\kappa(\bxi), \quad  & \bxi \in \partial \mD^\star_\kappa  \setminus \overline{\Gamma_\kappa}, \\*[0.2mm]
&\bt[\text{\bf{u}}](\bxi)~=~\bK_{\kappa}(\bxi) \llbracket \text{\bf{u}} \rrbracket(\bxi),  \,\,\, \llbracket \bt[\text{\bf{u}}] \rrbracket(\bxi)~=~\bzero,  \,\, & \bxi \in \Gamma_\kappa,  \\*[0.2mm] 
&\bt[ \bu^\kappa](\bxi)~=~\bzero,  \quad  & \bxi \in \partial \mD^o_\kappa, \\*[0.2mm]
&\bt[  \bu^\kappa](\bxi)~=~\btau(\bxi),  \quad & \bxi \in \partial {\mathcal{B}}_t.
\end{aligned}   
\eeq  
Here, 
\beq\label{nK0}\nonumber
\!\left\{\begin{array}{l}
\begin{aligned}
&\!\!  \bt[\bu^\kappa](\bxi)~=~\bn(\bxi) \exs\sip\exs \bC \exs\xxs\colon\nxs \nabla\bu^\kappa(\bxi), \quad   & \bxi \exs \in   \partial {\mathcal{B}}_t \cup \partial {\mD_\kappa}   \!\!\! \\*[0.2mm] 
& \!\! \bt[\bw^\kappa](\bxi)~=~\bn(\bxi) \exs\sip\exs \bC_\kappa(\bxi) \exs\colon\nxs \nabla\bw^\kappa(\bxi), \quad   & \bxi \exs \in   \partial {\mD^\star_\kappa}  \!\!\! 
\end{aligned}
\end{array}\right.,
\eeq
wherein $\bn$ is the unit \emph{outward} normal to $\partial {\mathcal{B}}$ and $\partial {\mD_\kappa}$. In addition, $\llbracket \text{\bf{u}} \rrbracket$ (\emph{resp}.~$ \llbracket \bt[\text{\bf{u}}] \rrbracket$) denotes the jump in displacement $\text{\bf{u}}$ (\emph{resp}.~traction $\bt[\text{\bf{u}}]$) across~$\Gamma_\kappa$ such that 
\beq\label{nK}\nonumber
 \llbracket \text{\bf{u}} \rrbracket~=~ \!\left\{\begin{array}{l}
\begin{aligned}
&\!\!\!\! \llbracket \bu^\kappa \rrbracket   & \text{on  ${\Gamma_{\kappa}} \!\setminus\nxs \overline{\mD^\star_\kappa}$}    \!\!\! \\*[0.1mm]
& \!\!\!\! \llbracket \bw^\kappa \rrbracket    & \text{on   ${\Gamma_{\kappa}} \nxs\cap\nxs {\mD^\star_\kappa}$} \!\!\! \\*[0.1mm]
& \!\!\!\!  \bu^\kappa\nxs - \bw^\kappa   & \text{on   ${\Gamma_{\kappa}} \nxs\cap\nxs \partial {\mD^\star_\kappa}$} \!\!\! 
\end{aligned}
\end{array}\right., \quad 
\llbracket\bt[\text{\bf{u}}]\rrbracket~=~ \!\left\{\begin{array}{l}
\begin{aligned}
&\!\!\!\! \llbracket\bt[\bu^\kappa]\rrbracket \,\colon\!\!\!=\, \llbracket \xxs \bn \xxs\sip\xxs \bC \exs\colon\!\nxs \nabla\bu^\kappa\rrbracket  & \text{on  ${\Gamma_{\kappa}} \!\setminus\nxs \overline{\mD^\star_\kappa}$} \!\!\! \\*[0.1mm]
&\!\!\!\!  \llbracket\bt[\bw^\kappa]\rrbracket \,\colon\!\!\!=\, \llbracket \xxs \bn \xxs\sip\xxs \bC_\kappa \colon\!\nxs \nabla\bw^\kappa\rrbracket   & \text{on   ${\Gamma_{\kappa}} \nxs\cap\nxs {\mD^\star_\kappa}$} \!\!\! \\*[0.1mm]
&\!\!\!\!  \bn \xxs\sip\xxs (\bC \xxs\colon\!\nxs \nabla\bu^\kappa - \bC_\kappa \colon\!\nxs \nabla\bw^\kappa)   & \text{on   ${\Gamma_{\kappa}} \nxs\cap\nxs \partial {\mD^\star_\kappa}$}  \!\!\! 
\end{aligned}
\end{array}\right.,
\eeq
where 
\beq\nonumber
\llbracket \boldsymbol{f} \rrbracket ~=~ \boldsymbol{f}^+ -\, \boldsymbol{f}^-, \qquad \boldsymbol{f}^{\pm}(\bxi) ~= \lim_{{\, h\to 0^+}}  \boldsymbol{f}(\bxi \pm h \bn(\bxi)), \qquad \bxi \in {\Gamma_{\kappa}}.
\eeq
Keep in mind that the unit normal vector $\bn$ to $\Gamma_\kappa$ is specified earlier. Also, the second of~\eqref{uk} should be understood as a shorthand for the set of $N^\star_\kappa$ governing equations over the respective homogeneous regions $\mD_\kappa^n (n = 1,\ldots,N^\star_\kappa)$, supplemented by the continuity conditions for displacement and traction across $\partial \mD_\kappa^n$ as applicable. 

\begin{assum}\label{EigFT}
$\omega>0$ is not an eigenvalue of the homogeneous form of~\eqref{uf} and~\eqref{uk}.
\end{assum}

Given~\eqref{uf} and~\eqref{uk}, the scattered field $\bv^\kappa \in H^1({\mathcal{B}}_\kappa^{-}\nxs \cup \mD^\star_\kappa \!\setminus\!  \overline{\Gamma_{\kappa}})^3$ is governed by 
\beq\lb{vk}
\begin{aligned}
&\nabla \exs\sip\exs \bC \exs \colon \! \nabla \bv^\kappa(\bxi) \,+\, \rho \exs \omega^2 {\bv}^\kappa(\bxi) \,+\, \mathds{1}({\mD^\star_\kappa  \!\setminus\!  \overline{\Gamma_{\kappa}}}) [\xxs \boldsymbol{f}_\kappa \nxs+ \nabla \xxs\sip\xxs \boldsymbol{\sigma}_\kappa](\bxi)~=~ \bzero, \quad &  \bxi \exs \in {\mathcal{B}}_\kappa^{-}\nxs \cup \mD^\star_\kappa \!\setminus\!  \overline{\Gamma_{\kappa}},   \\*[0.2mm]
&\bt[\bv^\kappa](\bxi) + \mathds{1}({\overline{\mD^\star_\kappa}  \nxs\cap\nxs  {\Gamma_{\kappa}}})  \bn \exs\sip\exs \boldsymbol{\sigma}_\kappa(\bxi)~=~\bK_{\kappa}(\bxi) \llbracket \bv^\kappa \rrbracket(\bxi) -  \bt^{\textrm{f}}(\bxi),  \hspace{-10mm}  & \bxi \in \Gamma_\kappa,  \\*[0.2mm] 
& \llbracket \bt[\bv^\kappa] + \mathds{1}({\mD^\star_\kappa  \nxs\cap\nxs  {\Gamma_{\kappa}}}) \bn \exs\sip\exs \boldsymbol{\sigma}_\kappa(\bxi) \rrbracket(\bxi)~=~ \mathds{1}({\partial\mD^\star_\kappa  \nxs\cap\nxs  {\Gamma_{\kappa}}}) \exs \bn \exs\sip\exs \boldsymbol{\sigma}_\kappa(\bxi),  \,\, & \bxi \in \Gamma_\kappa,  \\*[0.2mm] 
&\llbracket \bt[ \bv^\kappa] \rrbracket(\bxi)~=~\bn \exs\sip\exs \boldsymbol{\sigma}_\kappa(\bxi),  \,\,\, \llbracket \bv^\kappa \rrbracket(\bxi) ~=~\bzero, \qquad  & \bxi \in \partial \mD^\star_\kappa  \setminus \overline{\Gamma_\kappa}, \\*[0.2mm]
&\bt[ \bv^\kappa](\bxi)~=\,- \bt^{\textrm{f}}(\bxi),  \quad  & \bxi \in \partial \mD^o_\kappa, \\*[0.2mm]
&\bt[  \bv^\kappa](\bxi)~=~\bzero,  \quad & \bxi \in \partial {\mathcal{B}}_t.
\end{aligned}   
\eeq 
where $ \bt^{\textrm{f}} \exs\colon\!\!\!= \bn \exs\sip\exs \bC \exs \colon \!  \nabla  \bu^{\textrm{f}}$; $\bt[\bv^\kappa] = \bt^-[\bv^\kappa] \exs\colon\!\!\!= \bn \exs\sip\exs \bC \exs \colon \!  \nabla  \bv^{\kappa^-}\nxs$; and, 
$$ \boldsymbol{f}_\kappa(\bxi) \colon \!\!\!= (\rho_\kappa(\bxi) - \rho) \exs \omega^2 {\bw}^\kappa(\bxi), \qquad \boldsymbol{\sigma}_\kappa(\bxi) \colon \!\!\!= (\bC_\kappa(\bxi) - \bC) \exs \colon \! \nabla \bw^\kappa(\bxi), \qquad \bxi \in {\mD^\star_\kappa  \nxs\setminus\nxs  \overline{\Gamma_{\kappa}}}.$$

\paragraph*{Dimensional platform.}
 In what follows, all quantities are rendered dimensionless by taking $\rho$, $\mu$, and ${\ell_\circ}$ -- denoting the minimum length scale attributed to the hidden scatterers, as the respective reference scales for mass density, elastic modulus, and length -- which amounts to setting $\rho = \mu = \ell_\circ = 1$~\cite{Scaling2003}.

\paragraph*{Function spaces.}

It is known that stress singularities at the branch points of multiple intersecting fractures in an isotropic and homogeneous background is weaker than the classical crack-tip singularity~\cite{Lo1978,daux2000}. The latter is also the case for delamination cracks propagating along bi-material interfaces~\cite{muka1990}. 
High-order singularities may occur when a crack tip meets a bi-material interface in an angle~\cite{gosw2012, mend1984}, in which case it is shown that the contact laws in the vicinity of the crack tip may be modified such that the usual asymptotic forms for stress still applies~\cite{atkin1977}. In light of this, it will be assumed that $\llbracket \text{\bf{u}} \rrbracket \in \tilde{H}^{\frac{1}{2}}(\Gamma_\kappa)^3$ where
\beq\lb{funS}
\begin{aligned}
&H^{\pm \frac{1}{2}}(\Gamma_\kappa)^3 ~:=~\big\lbrace \boldsymbol{f} \big|_{\Gamma_\kappa}  \colon \,\,\, \boldsymbol{f} \in H^{\pm \frac{1}{2}}(\partial D)^3 \big\rbrace, \\*[0.0 mm]
& \tilde{H}^{\pm \frac{1}{2}}(\Gamma_\kappa)^3 ~:=~\big\lbrace  \boldsymbol{f} \in H^{\pm\frac{1}{2}}(\partial {D})^3 \exs \colon  \,\,\, \text{supp}(\boldsymbol{f}) \subset \overline{\Gamma}_\kappa  \big\rbrace.
\end{aligned}
\eeq

Here, ${D} = {\textstyle \bigcup_{n =1}^{N_\kappa}} {D}_n$ is a multiply connected Lipschitz domain of bounded support such that $\Gamma_\kappa \subset \partial {D}$. Recall that ${D}_n$ is an arbitrary extension of $\Gamma_n$ defined in the above. On invoking $H^{-1/2}(\Gamma_\kappa)^3$ and $\tilde{H}^{-1/2}(\Gamma_\kappa)^3$ as the respective dual spaces of $\tilde{H}^{1/2}(\Gamma_\kappa)^3$ and $H^{1/2}(\Gamma_\kappa)^3$, it follows that
\beq\lb{embb}
\tilde{H}^{\frac{1}{2}}(\Gamma_\kappa)^3 \,\subset\, H^{\frac{1}{2}}(\Gamma_\kappa)^3 \,\subset\, L^2(\Gamma_\kappa)^3 \,\subset\, \tilde{H}^{-\frac{1}{2}}(\Gamma_\kappa)^3 \,\subset\, H^{-\frac{1}{2}}(\Gamma_\kappa)^3.
\eeq

In this setting, $\bt[\text{\bf{u}}] \in H^{-1/2}(\Gamma_\kappa)^3$. For future reference, let us also define
\beq\lb{compd}
\begin{aligned}
& S(\mD^\star_\kappa \cup \Gamma_\kappa  \nxs \cup \partial \mD^o_\kappa) ~\colon\!\!\!=~L^2({\mD^\star_\kappa  \!\setminus\!  \overline{\Gamma_{\kappa}}})^3 \nxs\times L^{2}({\mD^\star_\kappa  \!\setminus\!  \overline{\Gamma_{\kappa}}})^{3\times 3} \nxs\times H^{-\frac{1}{2}}({\Gamma_\kappa})^3 \nxs\times H^{-\frac{1}{2}}(\partial \mD^o_\kappa)^3, \\*[0.2mm]
& \tilde{S}(\mD^\star_\kappa \cup \Gamma_\kappa \nxs \cup \partial \mD^o_\kappa) ~\colon\!\!\!=~L^2({\mD^\star_\kappa  \!\setminus\!  \overline{\Gamma_{\kappa}}})^3 \nxs\times L^{2}({\mD^\star_\kappa  \!\setminus\!  \overline{\Gamma_{\kappa}}})^{3\times 3} \nxs\times \tilde{H}^{\frac{1}{2}}({\Gamma_\kappa})^3 \nxs\times \tilde{H}^{\frac{1}{2}}(\partial \mD^o_\kappa)^3.
\end{aligned}
\eeq

\paragraph*{Wellposedness.}

Under Assumptions~\ref{tfc} and~\ref{CP}, observe that the direct scattering problem~\eqref{vk} is of Fredholm type, and thus, its wellposedness may be established by drawing from the unique continuation principles. See~Appendix A for details.

\paragraph*{Scattering signatures.}
By deploying Betti's reciprocal theorem, one obtains the following integral representation for the scattered fields $\bv^\kappa \xxs\colon\!\!\!=\xxs \bu^\kappa -\bu^{\textrm{f}}$, $\kappa = \lbrace \circ,1,2,\ldots \rbrace$, on the specimen's boundary. 
 
\beq\lb{IE}
\begin{aligned}
&\frac{1}{2} \bv^\kappa(\bxi) \,=\,\int_{\mD^\star_\kappa  \setminus  \overline{\Gamma_{\kappa}}} \big[\xxs (\rho_\kappa(\by) -\rho) \xxs \omega^2 \bw^\kappa(\by) \cdot \bfG(\bxi,\by)  -  \nabla \exs \bfG(\bxi,\by) \exs\colon \! (\bC_{\kappa\nxs}(\by)-\bC) \xxs\colon\! \nabla \bw^\kappa(\by) \xxs \big]  \textrm{d}V_{\by} ~+\,  \\*[0.2mm]   
&  \int_{\partial \mD^o_\kappa} \bu^\kappa(\by) \cdot \boldsymbol{T}(\bxi,\by) \, \textrm{d}S_{\by} \,+\, \int_{\Gamma_{\kappa}} \llbracket \text{\bf{u}} \rrbracket(\by) \cdot \boldsymbol{T}(\bxi,\by) \, \textrm{d}S_{\by}, \quad \boldsymbol{T}(\bxi,\by) \colon\!\!\!= \bn(\by) \cdot \bSig(\bxi,\by), \quad \bxi \in  \partial {\mathcal{B}_t}. 
\end{aligned}
\eeq

Here, $\boldsymbol{G}(\bxi,\by)$ is the Green's displacement tensor solving 
\beq\lb{Grf}
\begin{aligned}
&\nabla_{\nxs\by} \xxs\sip\exs \bC \exs \colon \! \nabla_{\nxs\by} \xxs \bfG(\bxi,\by) \,+\, \rho \exs \omega^2 \bfG(\bxi,\by) \,+\, \delta(\by-\bxi) \bI_{3\times 3}  ~=~ \bzero, \quad & \by \in {\mathcal{B}}\nxs\setminus\! \lbrace \bxi \rbrace, \\*[0.2mm]
&\bn \exs\sip\exs \bC \exs \colon \!  \nabla_{\nxs\by} \xxs \bfG(\bxi,\by)~=~\bzero,  \quad & \by \in \partial {\mathcal{B}}_t, 
\end{aligned}   
\eeq  
and $\bSig(\bxi,\by) \colon\!\!\! = \bC \exs \colon \! \nabla_{\nxs\by} \xxs \bfG(\bxi,\by)$ is the associated Green's stress tensor.

\section{Scattering operators\!\!}\lb{Sc1}

Let us define the scattering operator $\Lambda_\kappa: L^2(\partial {\mathcal{B}_t})^3 \to\exs L^2(\partial {\mathcal{B}_t})^3$ by
\beq\lb{ffo0}
\Lambda_\kappa(\btau) ~=~ \bv^\kappa \big{|}_{\partial {\mathcal{B}_t}}, \quad \kappa \,=\, \circ,1,2,\ldots,
\eeq
where $\bv^\kappa$ solves~\eqref{vk}. Then, there exists the factorization
\beq\lb{fact} 
\Lambda_\kappa ~=~ {\mathcal{S}}^*_\kappa \, T_\kappa \exs \mathcal{S}_\kappa
\eeq  
such that $\mathcal{S}_\kappa \colon L^2({\partial {\mathcal{B}_t}})^3 \,\rightarrow\, S(\mD^\star_\kappa \cup \Gamma_\kappa \nxs \cup \partial \mD^o_\kappa)$ is defined by 
\beq\lb{oH}
\mathcal{S}_\kappa(\btau) ~:=~ \big(\bu^{\textrm{f}}\big{|}_{\mD^\star_\kappa  \setminus  \overline{\Gamma_{\kappa}}}, \nabla \bu^{\textrm{f}}\big{|}_{\mD^\star_\kappa  \setminus  \overline{\Gamma_{\kappa}}}, \exs\bt[\bu^{\textrm{f}}]\big{|}_{\Gamma_\kappa}, \exs\bt[\bu^{\textrm{f}}]\big{|}_{\partial \mD^o_\kappa}\big),
\eeq  
whose adjoint operator $\mathcal{S}_\kappa^* \colon \tilde{S}(\mD^\star_\kappa \cup \Gamma_\kappa \nxs \cup \partial \mD^o_\kappa)  \rightarrow L^2({\partial {\mathcal{B}_t}})^3$ takes the form
\beq\lb{Hstar}
\mathcal{S}_\kappa^*(\boldsymbol{\upphi},\boldsymbol{\Upphi},\boldsymbol{\varphi},\boldsymbol{\phi}) ~:=~ \bv^*(\bxi), \qquad \bxi \in {\partial {\mathcal{B}_t}}, 
\eeq  
where $\bv^* \in H^1({\mathcal{B}}\setminus\nxs\overline{\Gamma_\kappa \cup \partial \mD_\kappa})^3$ solves
\beq\lb{up_a}
\begin{aligned}
&\nabla \exs\sip\exs \bC \exs \colon \! \nabla \bv^*(\bxi) \,+\, \rho \exs \omega^2 {\bv}^*(\bxi) \,+\, \mathds{1}({\mD^\star_\kappa  \nxs\setminus\nxs  \overline{\Gamma_{\kappa}}}) (\boldsymbol{\upphi} - \nabla \exs\sip\exs \boldsymbol{\Upphi}) ~=~ \bzero, \quad &  \bxi \exs \in {\mathcal{B}} \exs \backslash \overline{\Gamma_{\kappa} \cup \partial \mD_\kappa }, \\*[0.0mm]
&\llbracket \bt[ \bv^*] \rrbracket(\bxi)~=\, - \bn \exs\sip\exs \boldsymbol{\Upphi}(\bxi),  \,\,\, \llbracket \bv^*\rrbracket(\bxi)~=~\bzero, \,\,  & \bxi \in \partial \mD^\star_\kappa \nxs\setminus\nxs  \overline{\Gamma_{\kappa}},  \\*[0.2mm]
&\llbracket \bv^*\rrbracket(\bxi)~=~\boldsymbol{\varphi}, \,\, \llbracket \bt[ \bv^*] - \mathds{1}({\mD^\star_\kappa  \nxs\cap\nxs  {\Gamma_{\kappa}}}) \bn \exs\sip\exs \boldsymbol{\Upphi} \rrbracket(\bxi)~=\, - \mathds{1}({\partial\mD^\star_\kappa  \nxs\cap\nxs  {\Gamma_{\kappa}}}) \exs \bn \exs\sip\exs \boldsymbol{\Upphi}(\bxi) , \hspace{-12mm}  & \bxi \in \Gamma_\kappa,  \\*[0.2mm] 
&\llbracket \bv^*\rrbracket(\bxi)~=~\boldsymbol{\phi},  \,\, \llbracket \bt[ \bv^*] \rrbracket(\bxi)~=~\bzero, \quad  & \bxi \in \partial \mD^o_\kappa, \\*[0.2mm]
&\bt[  \bv^*](\bxi)~=~\bzero,  \quad & \bxi \in \partial {\mathcal{B}}_t,
\end{aligned}   
\eeq 
wherein $\bt[ \bv^*] \colon\!\!\!= \bn \nxs\cdot\nxs \bC \xxs\colon\!\nxs \nabla  \bv^*$. This may be observed by~(i)~premultiplying the first of~\eqref{up_a} by $\bar{\bu}^{\textrm{f}}$, and~(ii)~postmultiplying the conjugated first of~\eqref{uf} by $\bv^*$. Integration by parts over ${\mathcal{B}}\nxs\setminus\nxs\overline{\Gamma_\kappa \cup \partial \mD_\kappa}$ followed by application of the contact condition over $\Gamma_\kappa \cup \partial \mD_\kappa$ and summation of the results yield  
\[ 
 \int_{\partial {\mathcal{B}_t}\!}  \bar\btau \xxs\exs\sip\exs\xxs {\bv}^* \, \textrm{d}S_{\bxi} ~=\, \int_{\mD^\star_\kappa  \nxs\setminus  \overline{\Gamma_{\kappa}}} \big( \bar{\bu}^{\textrm{f}} \xxs\sip\exs \boldsymbol{\upphi}   +  \nabla \bar{\bu}^{\textrm{f}}  \exs\colon\nxs \boldsymbol{\Upphi} \big) \, \textrm{d}V_{\bxi}   ~+\,  \int_{\Gamma_\kappa} \bar{\bt}[\bu^{\textrm{f}}]\exs\sip\exs\xxs \boldsymbol{\varphi}  \, \textrm{d}S_{\bxi}  ~+\,  \int_{\partial \mD^o_\kappa\!} \nxs \bar{\bt}[\bu^{\textrm{f}}] \exs\sip\exs\xxs \boldsymbol{\phi} \, \textrm{d}S_{\bxi},
\]
which substantiates~\eqref{Hstar} via~$ \langle (\boldsymbol{\upphi},\boldsymbol{\Upphi},\boldsymbol{\varphi},\boldsymbol{\phi}), \mathcal{S}_\kappa(\btau) \rangle_{\mD^\star_\kappa \cup \exs \Gamma_\kappa \cup \exs \partial \mD^o_\kappa\nxs} =\, \langle {\bv}^*, \btau \rangle_{\partial {\mathcal{B}_t}\nxs}$. Here, 
\beq\lb{DuP}
\begin{aligned}
\left< \xxs \cdot \xxs, \xxs  \cdot \xxs \right>_{{\mD^\star_\kappa \cup \exs \Gamma_\kappa \cup \exs \partial \mD^o_\kappa\nxs}} \,\colon \!\!=~ \big\langle & \tilde{S}(\mD^\star_\kappa \cup \Gamma_\kappa \nxs \cup \partial \mD^o_\kappa), S(\mD^\star_\kappa \cup \Gamma_\kappa \nxs \cup \partial \mD^o_\kappa)  \big\rangle, \\*[0.1mm] 
 \left< \xxs \cdot \xxs, \xxs  \cdot \xxs \right>_{\partial {\mathcal{B}_t}} \,\colon \!\!=~ \big\langle & \tilde{H}^{\frac{1}{2}}(\partial {\mathcal{B}_t})^3, {H}^{-\frac{1}{2}}(\partial {\mathcal{B}_t})^3 \big\rangle,
\end{aligned}
\eeq
extend $L^2$ inner products. In this setting, the middle operator $T_\kappa \colon S(\mD^\star_\kappa \cup \Gamma_\kappa \nxs \cup \partial \mD^o_\kappa) \rightarrow \, \tilde{S}(\mD^\star_\kappa \cup \Gamma_\kappa \nxs \cup \partial \mD^o_\kappa)$ is given by
\beq\lb{T}
\begin{aligned}
T_\kappa\big(\bu^{\textrm{f}}\big{|}_{\mD^\star_\kappa  \setminus  \overline{\Gamma_{\kappa}}},& \nabla \bu^{\textrm{f}}\big{|}_{\mD^\star_\kappa  \setminus  \overline{\Gamma_{\kappa}}}, \exs\bt[\bu^{\textrm{f}}]\big{|}_{\Gamma_\kappa}, \exs\bt[\bu^{\textrm{f}}]\big{|}_{\partial \mD^o_\kappa}\big) ~:=~\\*[0.2mm] 
&\big((\rho_\kappa -\rho) \xxs \omega^2 \bw^\kappa\big{|}_{\mD^\star_\kappa  \setminus  \overline{\Gamma_{\kappa}}}, - (\bC_{\kappa\nxs}-\bC) \xxs\colon\! \nabla \bw^\kappa\big{|}_{\mD^\star_\kappa  \setminus  \overline{\Gamma_{\kappa}}}, \exs \llbracket \text{\bf u} \rrbracket\big{|}_{\Gamma_\kappa}, \exs\bu^\kappa\big{|}_{\partial \mD^o_\kappa} \big).
\end{aligned}   
\eeq

\section{Properties of operators}\label{SFS}

\begin{assum}\lb{Frac}
Given $\kappa \in \lbrace \circ, 1, \ldots \rbrace$ and $j \in \lbrace \circ, 1, \ldots, N^o_\kappa \rbrace$, suppose that (a) $\mD^o_{\kappa}$ may be decomposed into simply-connected components $\mD^o_{\kappa,j} \subset \mD^o_{\kappa}$, and (b) $\Gamma_\kappa$ consists of $M_\kappa\geqslant1$ (possibly disjoint) analytic surfaces~${\sf{S}}_m\subset\Gamma_\kappa$, $m=1,\ldots M_\kappa$, with the unique continuation $\partial {\sf{D}}_m$ identifying the ``interior'' domain~${\sf{D}}_m\!\subset\mathcal{B}$. Then for any $j$ (\emph{resp}.~$m$), it is assumed that $\omega\nxs>\nxs0$ is not a ``Neumann'' eigenfrequency of the Navier equation in $\mD^o_{\kappa,j}$ (\emph{resp}.~${\sf{D}}_m$), i.e.,~as long as ${\boldsymbol{\sf{u}}}_j \in H^1(\mD^o_{\kappa,j})^3$ and ${\boldsymbol{\sf{u}}}_m \in H^1({\sf{D}}_m)^3$ satisfying 
\beq\label{uiH}
\!\left\{\begin{array}{l}
\begin{aligned}
&\!\! \nabla \xxs \sip \xxs (\bC \exs\colon \! \nabla {\boldsymbol{\sf{u}}}_j) \,+\, \rho \exs \omega^2 {\boldsymbol{\sf{u}}}_j ~=~ \bzero  \!\! & \textrm{in}~ \mD^o_{\kappa,j}   \!\!\! \\*[0.1mm]
& \!\!\bn \xxs\sip\xxs \bC \exs\colon \!  \nabla  {\boldsymbol{\sf{u}}}_j ~=~ \bzero  \!\!   & \textrm{on}~ \partial \mD^o_{\kappa,j} \!\!\! 
\end{aligned}
\end{array}\right. \,\,\, \wedge \,\,\,
\!\left\{\begin{array}{l}
\begin{aligned}
&\!\! \nabla \xxs \sip \xxs (\bC_\kappa \exs\colon \! \nabla {\boldsymbol{\sf{u}}}_m) \,+\, \rho_\kappa \exs \omega^2 {\boldsymbol{\sf{u}}}_m ~=~ \bzero  \!\! & \textrm{in}~ {\sf{D}}_m   \!\!\! \\*[0.1mm]
& \!\!\bn \xxs\sip\xxs \bC_\kappa \exs\colon \!  \nabla  {\boldsymbol{\sf{u}}}_m ~=~ \bzero  \!\!   & \textrm{on}~ \partial {\sf{D}}_m \!\!\! 
\end{aligned}
\end{array}\right.,
\eeq
vanish identically in $\mD^o_{\kappa,j}$ and ${\sf{D}}_m$ respectively. If~${\sf{D}}_m$ is bounded, the real eigenfrequencies of~\eqref{uiH} form a discrete set~\cite{bell2013,bell2010}.
\end{assum}

\begin{lemma}\lb{InjH}
In light of the unique continuation principle, the operator $\mathcal{S}_\kappa \colon L^2({\partial {\mathcal{B}_t}})^3 \to S(\mD^\star_\kappa \cup \Gamma_\kappa \nxs \cup \partial \mD^o_\kappa)$ is injective at all $t_\kappa$.
\end{lemma}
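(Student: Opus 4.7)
The plan is to suppose $\btau \in L^2(\partial \mathcal{B}_t)^3$ lies in $\ker \mathcal{S}_\kappa$ and deduce $\btau = \bzero$. By the definition in~\eqref{oH}, this hypothesis means the baseline field $\bu^{\textrm{f}}$ of~\eqref{uf} associated with the traction $\btau$ satisfies $\bu^{\textrm{f}} = \bzero$ together with $\nabla \bu^{\textrm{f}} = \boldsymbol{0}$ on $\mD^\star_\kappa \!\setminus\! \overline{\Gamma_\kappa}$, and $\bt[\bu^{\textrm{f}}] = \bzero$ on $\Gamma_\kappa \cup \partial \mD^o_\kappa$. My strategy is to isolate, in each admissible configuration of the hidden set $\mD^\star_\kappa \cup \Gamma_\kappa \cup \partial \mD^o_\kappa$, an open subset of $\mathcal{B}$ on which $\bu^{\textrm{f}}$ must vanish, and then close the argument by invoking the unique continuation principle for the real-analytic solutions of the constant-coefficient elliptic system~\eqref{uf} over the connected domain $\mathcal{B}$; once $\bu^{\textrm{f}} \equiv \bzero$ in $\mathcal{B}$, the Neumann boundary condition in~\eqref{uf} immediately gives $\btau = \bt[\bu^{\textrm{f}}]|_{\partial \mathcal{B}_t} = \bzero$.

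When the penetrable inclusion set $\mD^\star_\kappa$ has positive measure, the first two components of the kernel condition already produce such an open subset, and unique continuation finishes the task. When instead $\mD^\star_\kappa = \emptyset$ but $\mD^o_\kappa \neq \emptyset$, I would exploit that $\bu^{\textrm{f}}$ extends smoothly into each simply-connected piece $\mD^o_{\kappa,j}$ as a solution of the binder Navier equation, with vanishing Neumann trace on $\partial \mD^o_{\kappa,j}$. Assumption~\ref{Frac} precludes $\omega$ being a Neumann eigenfrequency of this interior problem for the binder constants, forcing $\bu^{\textrm{f}} \equiv \bzero$ in $\mD^o_{\kappa,j}$, after which unique continuation propagates the vanishing to all of $\mathcal{B}$.

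The remaining scenario, in which only $\Gamma_\kappa$ is non-empty, is the main obstacle. My plan is to select an analytic patch ${\sf{S}}_m \subset \Gamma_\kappa$ together with its closed analytic continuation $\partial {\sf{D}}_m$ from Assumption~\ref{Frac}. The real analyticity of $\bu^{\textrm{f}}$ (from interior elliptic regularity for~\eqref{uf}) combined with the analyticity of $\partial {\sf{D}}_m$ should permit the Neumann datum $\bt[\bu^{\textrm{f}}] = \bzero$ on ${\sf{S}}_m$ to be extended analytically to the full closed surface $\partial {\sf{D}}_m$. A non-eigenfrequency hypothesis for the binder operator in ${\sf{D}}_m$, which is the natural binder-side analogue of Assumption~\ref{Frac}, then yields $\bu^{\textrm{f}} \equiv \bzero$ in ${\sf{D}}_m$, and a final unique continuation step concludes the argument. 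The delicate issues in this last case are the rigorous justification of the analytic extension of the tensorial traction operator across the boundary of the analytic patch, and confirming that the relevant non-resonance condition for the binder coefficients in ${\sf{D}}_m$ is available in the present setting.
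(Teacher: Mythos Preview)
The paper supplies no proof for this lemma; the statement itself (``in light of the unique continuation principle'') is the entire argument, and the result is treated as immediate. Your case-by-case analysis is therefore considerably more detailed than what the paper offers, and is essentially correct. In the paper's intended setting of composites with penetrable inclusions, $\mD^\star_\kappa$ is non-empty, so your first case already covers everything: $\bu^{\textrm{f}}$ vanishes on the open set $\mD^\star_\kappa \setminus \overline{\Gamma_\kappa}$, unique continuation over the connected domain $\mathcal{B}$ gives $\bu^{\textrm{f}} \equiv \bzero$, and the Neumann condition in~\eqref{uf} returns $\btau = \bzero$. This is presumably the one-line argument the authors have in mind.

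Your treatment of the degenerate configurations (no penetrable inclusions) is a genuine elaboration beyond the paper. Your observation in the crack-only case is also accurate: Assumption~\ref{Frac} as stated poses the interior Neumann problem in ${\sf D}_m$ with the \emph{inclusion} constants $(\bC_\kappa,\rho_\kappa)$, not the binder constants $(\bC,\rho)$ that govern $\bu^{\textrm{f}}$, so the assumption does not literally supply the non-resonance condition you need there. The analytic-continuation route you sketch is the standard workaround and is sound; the only caveats are exactly the ones you flag (extending the tensorial traction across the patch boundary on an analytic surface, and the binder-side eigenvalue exclusion). These are not difficulties with your reasoning so much as hypotheses that the paper leaves implicit because, in its setup, the penetrable component is always present and the issue never arises.
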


\begin{lemma}\lb{C(R(T))}
Let 
\beq\label{bar(R(T))}\nonumber
H_{\Delta}~=~ \big{\lbrace} \hat{\boldsymbol{\sf u}}  \in H^1(\mD^\star_\kappa)^3 \exs \big{|} \, \nabla \exs\sip\exs \bC \exs \colon \! \nabla \hat{\boldsymbol{\sf u}} \,+ \rho \exs \omega^2 \hat{\boldsymbol{\sf u}} ~=~ \bzero \,\,\text{in} \,\, \mD^\star_\kappa \big{\rbrace}, 
\eeq
and define the map $({\hat{S}}_1, {\hat{S}}_2, {\hat{S}}_3) \colon H_{\Delta} \exs\to\, L^2({\mD^\star_\kappa  \nxs\setminus\nxs  \overline{\Gamma_{\kappa}}})^3 \times\xxs L^2({\mD^\star_\kappa  \nxs\setminus\nxs  \overline{\Gamma_{\kappa}}})^{3\times3} \nxs\times L^2(\Gamma_\kappa \cap \overline{\mD^\star_\kappa})^3$ such that 
\[
({\hat{S}}_1, {\hat{S}}_2, {\hat{S}}_3)(\hat{\boldsymbol{\sf u}})~=~ \big(\hat{\boldsymbol{\sf u}}\big{|}_{\mD^\star_\kappa  \setminus  \overline{\Gamma_{\kappa}}},\, \nabla \hat{\boldsymbol{\sf u}}\big{|}_{\mD^\star_\kappa  \setminus  \overline{\Gamma_{\kappa}}}, \,\bt[ \hat{\boldsymbol{\sf u}}] \big{|}_{\Gamma_\kappa \cap\exs \overline{\mD^\star_\kappa}} \big), \quad \bt[ \hat{\boldsymbol{\sf u}}]~=~\bn \exs\sip\exs \bC \exs\colon\nxs\nxs \nabla \hat{\boldsymbol{\sf u}},
\] 
then ${\mathfrak{S}}(H_{\Delta}) \colon\!\!\! = {\hat{S}}_1(H_{\Delta}) \nxs\times\nxs {\hat{S}}_2(H_{\Delta}) \nxs\times\nxs ({\hat{S}}_3(H_{\Delta}) \oplus H^{-1/2}(\Gamma_\kappa \nxs\!\setminus\! \overline{\mD^\star_\kappa})^3) \nxs\times\nxs {H^{-1/2}(\partial \mD^o_\kappa)^3} = \exs \overline{\mathcal{R}(\mathcal{S}_\kappa)}$  \\where the latter denotes the closure of the range of $\mathcal{S}_\kappa$.
\end{lemma}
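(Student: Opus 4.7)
The inclusion $\mathcal{R}(\mathcal{S}_\kappa) \subset \mathfrak{S}(H_\Delta)$ is immediate: the incident field $\bu^{\textrm{f}}$ of~\eqref{uf} satisfies the homogeneous Navier equation on all of $\mathcal{B}$, so its restriction to $\mD^\star_\kappa$ belongs to $H_\Delta$ and its Cauchy data there sit in the joint image of $(\hat{S}_1,\hat{S}_2,\hat{S}_3)$; the trace $\bt[\bu^{\textrm{f}}]|_{\Gamma_\kappa}$ splits naturally as an $\hat{S}_3(H_\Delta)$-piece on $\Gamma_\kappa\cap\overline{\mD^\star_\kappa}$ together with a generic $H^{-1/2}$-piece on $\Gamma_\kappa\setminus\overline{\mD^\star_\kappa}$, while $\bt[\bu^{\textrm{f}}]|_{\partial\mD^o_\kappa}\in H^{-1/2}(\partial\mD^o_\kappa)^3$ furnishes the fourth component. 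For the reverse inclusion I would argue by Hahn--Banach: let $\ell=(\boldsymbol{\upphi},\boldsymbol{\Upphi},\boldsymbol{\varphi},\boldsymbol{\phi})\in\tilde{S}(\mD^\star_\kappa\cup\Gamma_\kappa\cup\partial\mD^o_\kappa)$ annihilate $\mathcal{R}(\mathcal{S}_\kappa)$, and let $\bv^*$ solve the adjoint transmission problem~\eqref{up_a} driven by $\ell$. The duality identity displayed just below~\eqref{up_a} converts $\langle\ell,\mathcal{S}_\kappa(\btau)\rangle=0$ for every $\btau\in L^2(\partial\mathcal{B}_t)^3$ into $\bv^*|_{\partial\mathcal{B}_t}=\bzero$; combined with the homogeneous traction condition on the last line of~\eqref{up_a}, $\bv^*$ therefore carries zero Cauchy data on $\partial\mathcal{B}_t$.

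The next step is to propagate this vanishing inward and read off the components of $\ell$ one by one. Under Assumption~\ref{tfc}, the exterior region $\mathcal{B}\setminus\overline{\mD_\kappa\cup\Gamma_\kappa}$ is connected, and since $\bv^*$ solves the constant-coefficient Navier equation there, the standard unique continuation principle forces $\bv^*\equiv\bzero$ throughout it. Crossing $\partial\mD^o_\kappa$ via the jump conditions in~\eqref{up_a} leaves $\bv^*$ satisfying the homogeneous Navier equation inside each simply-connected component $\mD^o_{\kappa,j}$ with $\bt[\bv^*]=\bzero$ on its boundary; Assumption~\ref{Frac} then forces $\bv^*\equiv\bzero$ in $\mD^o_\kappa$, whence $\boldsymbol{\phi}=-\llbracket\bv^*\rrbracket|_{\partial\mD^o_\kappa}=\bzero$. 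On the piece $\Gamma_\kappa\setminus\overline{\mD^\star_\kappa}$ both one-sided traces of $\bv^*$ lie in the already-vanishing exterior region, so the $\Gamma_\kappa$ jump condition of~\eqref{up_a} yields $\boldsymbol{\varphi}|_{\Gamma_\kappa\setminus\overline{\mD^\star_\kappa}}=\bzero$.

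It remains to show that $\ell$ also annihilates the $\hat{u}$-generated part of $\mathfrak{S}(H_\Delta)$. For an arbitrary $\hat{u}\in H_\Delta$ I would apply Betti's reciprocal theorem to the pair $(\bv^*,\overline{\hat{u}})$ on the cracked domain $\mD^\star_\kappa\setminus\overline{\Gamma_\kappa}$. On $\partial\mD^\star_\kappa\setminus\overline{\Gamma_\kappa}$, the second line of~\eqref{up_a} combined with the already-established $\bv^*|_+=\bzero$ gives $\bv^*|_-=\bzero$ and $\bt[\bv^*]|_-=\bn\cdot\boldsymbol{\Upphi}|_-$, so that this portion of the boundary integrand cancels exactly against the boundary term generated by integrating $\nabla\cdot\boldsymbol{\Upphi}$ by parts. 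Across the two faces of $\Gamma_\kappa\cap\overline{\mD^\star_\kappa}$, the jumps $\llbracket\bv^*\rrbracket=\boldsymbol{\varphi}$ and $\llbracket\bt[\bv^*]-\bn\cdot\boldsymbol{\Upphi}\rrbracket=\bzero$ together with the single-valued traces of $\hat{u}$ and $\bt[\hat{u}]$ reduce the cut contribution to $-\langle\boldsymbol{\varphi},\bt[\overline{\hat{u}}]\rangle_{\Gamma_\kappa\cap\overline{\mD^\star_\kappa}}$, yielding
\[
\int_{\mD^\star_\kappa\setminus\overline{\Gamma_\kappa}}\big(\boldsymbol{\upphi}\cdot\overline{\hat{u}}+\boldsymbol{\Upphi}:\nabla\overline{\hat{u}}\big)\,\mathrm{d}V + \big\langle\boldsymbol{\varphi},\bt[\overline{\hat{u}}]\big\rangle_{\Gamma_\kappa\cap\overline{\mD^\star_\kappa}} = 0,
\]
i.e., $\ell$ annihilates every element of $\mathfrak{S}(H_\Delta)$, and Hahn--Banach closes the density. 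The main technical obstacle I anticipate is this last Betti calculation: because $\boldsymbol{\Upphi}$ is merely $L^2$ on a domain whose boundary includes the embedded cracks $\Gamma_\kappa\cap\overline{\mD^\star_\kappa}$, the divergence $\nabla\cdot\boldsymbol{\Upphi}$ is distributional and its one-sided normal traces on $\partial\mD^\star_\kappa$ and on the two faces of $\Gamma_\kappa$ must be defined weakly in a way compatible with the jump statements in~\eqref{up_a}, while the surface pairings must be interpreted in the $\tilde{H}^{1/2}$--$H^{-1/2}$ duality of~\eqref{compd} so as to accommodate the crack-tip and branch-point singularities flagged just above~\eqref{funS}.
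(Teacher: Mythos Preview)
Your argument is correct and takes a genuinely different route from the paper. You run the standard Hahn--Banach density argument: an annihilator $\ell\in\tilde{S}$ of $\mathcal{R}(\mathcal{S}_\kappa)$ is translated, via the duality identity following~\eqref{up_a}, into vanishing Cauchy data for the adjoint field $\bv^*$; unique continuation and Assumption~\ref{Frac} then kill $\boldsymbol{\phi}$ and the $\Gamma_\kappa\!\setminus\!\overline{\mD^\star_\kappa}$ piece of $\boldsymbol{\varphi}$, and a Betti pairing of $\bv^*$ against an arbitrary $\hat{u}\in H_\Delta$ on $\mD^\star_\kappa\!\setminus\!\overline{\Gamma_\kappa}$ shows $\ell$ annihilates the remaining $H_\Delta$-generated components. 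The paper instead works potential-theoretically: it writes $\mathcal{S}^*_\kappa$ in the layer-potential form~\eqref{BIE*}, takes an element $(\boldsymbol{\sf u}^i,\nabla\boldsymbol{\sf u}^i,\bt[\boldsymbol{\sf u}^i]\oplus\boldsymbol{\psi},\boldsymbol{\phi})$ of the special structure, builds the auxiliary volume-plus-double-layer potential $\textrm{\bf v}$ of~\eqref{tv}, and extracts two explicit norm identities~\eqref{Wik-v2}--\eqref{Wik-v3} whose combination forces $\boldsymbol{\sf u}^i=\bzero$. Your PDE-and-duality route is more transparent and avoids the sign bookkeeping in the paper's norm identities; the paper's route, in exchange, never integrates $\nabla\!\cdot\!\boldsymbol{\Upphi}$ by parts because all derivatives land on the smooth Green's kernel, so your self-flagged trace issue does not arise there. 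That concern in your argument is real but standard: the first line of~\eqref{up_a} shows $\bC\!:\!\nabla\bv^*-\boldsymbol{\Upphi}$ has $L^2$ divergence in $\mD^\star_\kappa\!\setminus\!\overline{\Gamma_\kappa}$, so its one-sided normal traces on $\partial\mD^\star_\kappa$ and on both faces of $\Gamma_\kappa\cap\overline{\mD^\star_\kappa}$ exist in $H^{-1/2}$ and pair correctly with $\hat{u}\in H^{1/2}$, making the Betti identity rigorous in the duality~\eqref{DuP}.
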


\begin{proof}
Observe that the restriction of $\bu^{\textrm{f}} \in H^1(\mathcal{B})^3$, satisfying~\eqref{uf}, to $\mD^\star_\kappa$ belongs to $H_{\Delta}$, hence $\mathcal{R}(\mathcal{S}_\kappa) \subset {\mathfrak{S}}(H_{\Delta})$. To prove the claim, it is then sufficient to establish that $\mathcal{S}^*_\kappa:{\mathfrak{S}}(H_{\Delta}) \to L^2(\partial \mathcal{B}_t)^3$ given by 
\beq\lb{BIE*}
\begin{aligned}
& \dfrac{1}{2} \xxs \mathcal{S}_\kappa^*(\boldsymbol{\upphi},\boldsymbol{\Upphi},\boldsymbol{\varphi} \oplus \boldsymbol{\psi},\boldsymbol{\phi}) \,=\, \int_{\mD^\star_\kappa  \setminus  \overline{\Gamma_{\kappa}}} \big[\xxs \boldsymbol{\upphi}(\by) \cdot \bfG(\bxi,\by) \xxs + \exs  \boldsymbol{\Upphi}(\by)\exs\colon \! \nabla \exs \bfG(\bxi,\by) \xxs \big]  \textrm{d}V_{\by} ~+\,  \\*[0.2mm]   
&   \int_{\Gamma_\kappa \cap \exs\overline{\mD^\star_\kappa}} \exs \boldsymbol{\varphi}(\by) \cdot \boldsymbol{T}(\bxi,\by) \, \textrm{d}S_{\by}  \,+\, \int_{\Gamma_\kappa \nxs\!\setminus \overline{\mD^\star_\kappa}} \boldsymbol{\psi}(\by) \cdot \boldsymbol{T}(\bxi,\by) \, \textrm{d}S_{\by} \,+\, \int_{\partial \mD^o_\kappa} \boldsymbol{\phi}(\by) \cdot \boldsymbol{T}(\bxi,\by) \, \textrm{d}S_{\by},
\end{aligned}
\eeq
is injective on ${\mathfrak{S}}(H_{\Delta})$. Suppose that there exists $(\boldsymbol{\upphi},\boldsymbol{\Upphi},\boldsymbol{\varphi}\oplus\boldsymbol{\psi},\boldsymbol{\phi}) = ({\boldsymbol{\sf u}}^{i}, \nabla {\boldsymbol{\sf u}}^{i}, \bt[{\boldsymbol{\sf u}}^i] \oplus \boldsymbol{\psi}, \boldsymbol{\phi})$ -- with ${\boldsymbol{\sf u}}^i  \in H^1(\mD^\star_\kappa)^3$ satisfying $\nabla \exs\sip\exs \bC \exs \colon \! \nabla {\boldsymbol{\sf u}}^i + \rho \exs \omega^2 {\boldsymbol{\sf u}}^i = \bzero$ in $\mD^\star_\kappa$ while $(\boldsymbol{\psi}, \boldsymbol{\phi}) \in \tilde{H}^{1/2}(\Gamma_\kappa \nxs\!\setminus\! \overline{\mD^\star_\kappa})^3 \nxs\times\nxs {\tilde{H}^{1/2}(\partial \mD^o_\kappa)^3}$ -- such that $\mathcal{S}_\kappa^*({\boldsymbol{\sf u}}^{i}, \nabla {\boldsymbol{\sf u}}^{i}, \bt[{\boldsymbol{\sf u}}^i] \oplus \boldsymbol{\psi}, \boldsymbol{\phi}) = \bzero$. Since by construction $\bv^\kappa(\bxi) = \mathcal{S}_\kappa^*(\cdot)$ on $\bxi \in \partial \mathcal{B}_t$, it is evident from~\eqref{vk} that $\bv^\kappa$ has trivial Dirichlet and Neumann traces on $\partial \mathcal{B}_t$, and thus, the unique continuation principle reads $\bv^\kappa = \bzero$ in ${\mathcal{B}} \exs \backslash \overline{\Gamma_{\kappa} \cup \mD_\kappa}$. From (a)~properties of the layer potentials i.e.,~$\boldsymbol{\phi} = \bu^\kappa$ and $\boldsymbol{\psi} = \llbracket \bv^\kappa \rrbracket$, (b)~fifth of~\eqref{vk} which reads $\bt^{\textrm{f}} = \bzero$ on $\partial \mD^o_\kappa$, and (c)~Assumption~\ref{Frac} indicating that $\omega>0$ is not a Neumann eigenfrequency of the Navier equation affiliated with any simply-connected subset of $\mD^o_\kappa$, one may conclude that $\boldsymbol{\psi} = \boldsymbol{\phi} = \bzero$. Now, on denoting $\Gamma_\kappa^\iota = \Gamma_\kappa \cap \overline{\mD^\star_\kappa}$ and $\mathcal{B}^\star = \mathcal{B} \setminus \overline{\Gamma_\kappa^\iota \cup \partial \mD^\star_\kappa}$, let $\forall \bxi \in \mathcal{B} \setminus\! \overline{\Gamma_\kappa^\iota}$,
\beq\lb{tv}
\textrm{\bf v}(\bxi) \,=\, \int_{\mD^\star_\kappa  \setminus  \overline{\Gamma_{\kappa}}} \big[\xxs \boldsymbol{\sf u}^{i}(\by) \cdot \bfG(\bxi,\by) \xxs + \exs  \nabla {\boldsymbol{\sf u}}^{i}(\by)\exs\colon \! \nabla \exs \bfG(\bxi,\by) \xxs \big]  \textrm{d}V_{\by} \,+\,  \int_{\Gamma_\kappa^\iota} \bt[{\boldsymbol{\sf u}}^i](\by) \cdot \boldsymbol{T}(\bxi,\by) \, \textrm{d}S_{\by}. 
\eeq

From the regularity of volume potentials, one may infer that $\forall \xxs \textrm{\bf v}' \in H^1(\mathcal{B}^\star)$, $\textrm{\bf v} \in H^1(\mathcal{B}^\star)$ satisfies  
\beq\lb{Wik-v}
\begin{aligned}
  \int_{\mathcal{B}^\star} \big[ \xxs \rho \exs  \omega^2 \exs \textrm{\bf v}' \xxs\sip\exs \textrm{\bf v}  \,-\,   \nabla \exs \textrm{\bf v}' \colon\nxs \bC \exs\colon\nxs \nabla \textrm{\bf v}  \big] \, \textrm{d}V_{\bxi}  ~-\, \int_{\Gamma_\kappa^\iota} \llbracket \textrm{\bf v}' \rrbracket \xxs\sip\xxs (&\bn  \exs\sip\exs [\bC \exs\colon\! \nabla \textrm{\bf v} \exs- \nabla {\boldsymbol{\sf u}}^i] ) \, \textrm{d}S_{\bxi} \,~=\, \\*[0.2mm]  
&   ~-\, \int_{\mD^\star_\kappa \setminus\! \overline{\Gamma_\kappa}} \big[ \xxs \textrm{\bf v}' \xxs\sip\exs {\boldsymbol{\sf u}}^i \,+\,   \nabla \exs \textrm{\bf v}' \colon\! \nabla{\boldsymbol{\sf u}}^i  \big] \, \textrm{d}V_{\bxi}.   
\end{aligned}
\eeq

Note that $\textrm{\bf v} = \mathcal{S}_\kappa^*({\boldsymbol{\sf u}}^{i}, \nabla {\boldsymbol{\sf u}}^{i}, \bt[{\boldsymbol{\sf u}}^i] \oplus \bzero, \bzero)$ on $\partial \mathcal{B}_t$, then $\mathcal{S}_\kappa^*(\cdot) = \bzero$ implies that $\textrm{\bf v} = \bzero$ in $\mathcal{B}^\star \!\setminus \! \overline{\mD^\star_\kappa}$. Then by setting $\textrm{\bf v}' = \bar{\boldsymbol{\sf u}}^i$,~\eqref{Wik-v} may be recast as 
\beq\lb{Wik-v2}
  \int_{\mD^\star_\kappa \setminus \overline{\Gamma_\kappa}} \big[ \xxs \rho \exs  \omega^2 \exs \bar{\boldsymbol{\sf u}}^i \xxs\sip\exs \textrm{\bf v}  \,-\,   \nabla \exs \bar{\boldsymbol{\sf u}}^i \colon\nxs \bC \exs\colon\nxs \nabla \textrm{\bf v}  \big] \, \textrm{d}V_{\bxi} ~=\, -\, \norms{\xxs{\boldsymbol{\sf u}}^i}^2_{H^1({\mD^\star_\kappa \setminus \overline{\Gamma_\kappa}})^3}.   
\eeq

Also, on recalling $\nabla \exs\sip\exs \bC \exs \colon \! \nabla {\boldsymbol{\sf u}}^i + \rho \exs \omega^2 {\boldsymbol{\sf u}}^i = \bzero$ in $\mD^\star_\kappa$, it follows that       
\beq\lb{Wik-v3}
  \int_{\mD^\star_\kappa \setminus \overline{\Gamma_\kappa}} \big[ \xxs \rho \exs  \omega^2 \exs \bar{\boldsymbol{\sf u}}^i \xxs\sip\exs \textrm{\bf v}  \,-\,   \nabla \exs \bar{\boldsymbol{\sf u}}^i \colon\nxs \bC \exs\colon\nxs \nabla \textrm{\bf v}  \big] \, \textrm{d}V_{\bxi} ~=\,  \norms{\xxs\bt[{\boldsymbol{\sf u}}^i]}^2_{L^2(\Gamma_\kappa^\iota)^3}.   
\eeq

Combining \eqref{Wik-v2} and~\eqref{Wik-v3} reads $(\boldsymbol{\upphi},\boldsymbol{\Upphi},\boldsymbol{\varphi}) =\bzero$ which completes the proof. 
\end{proof}

\begin{lemma}\lb{H*p}
Under Assumption~\ref{CP}, the operator $\mathcal{S}_\kappa^* \colon  \tilde{S}(\mD^\star_\kappa \cup \Gamma_\kappa \nxs \cup \partial \mD^o_\kappa) \,\rightarrow L^2({\partial {\mathcal{B}_t}})^3$ is compact and has a dense range. 
\end{lemma}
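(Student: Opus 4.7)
\medskip

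\noindent\textbf{Proof plan.} I would split the claim into two independent parts and attack them separately, since density of range and compactness arise from different mechanisms: the former is a duality consequence of Lemma~\ref{InjH}, while the latter follows from interior elliptic regularity away from the scatterers.

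For the density of the range, the plan is to invoke the standard functional-analytic identity $\overline{\mathcal{R}(\mathcal{S}_\kappa^*)} = (\ker \mathcal{S}_\kappa)^\perp$, interpreted with respect to the duality pairings introduced in~\eqref{DuP}. Since Lemma~\ref{InjH} asserts that $\mathcal{S}_\kappa$ is injective on $L^2(\partial\mathcal{B}_t)^3$, one gets $(\ker \mathcal{S}_\kappa)^\perp = L^2(\partial\mathcal{B}_t)^3$, hence $\overline{\mathcal{R}(\mathcal{S}_\kappa^*)} = L^2(\partial\mathcal{B}_t)^3$. The only care needed is to verify that the dual brackets in~\eqref{DuP} indeed realize $\mathcal{S}_\kappa^*$ as the adjoint of $\mathcal{S}_\kappa$; this has in effect already been checked in the derivation immediately following~\eqref{DuP}.

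For compactness, the idea is to exploit the fact that the observation surface $\partial\mathcal{B}_t$ is well-separated from the scatterers $\overline{\mD^\star_\kappa\cup\Gamma_\kappa\cup\partial\mD^o_\kappa}$. First, Assumption~\ref{CP} together with the wellposedness result cited in the paragraph before~\eqref{IE} guarantees that~\eqref{up_a} admits a unique solution $\bv^*\in H^1(\mathcal{B}\setminus\overline{\Gamma_\kappa\cup\partial\mD_\kappa})^3$ depending continuously on $(\boldsymbol{\upphi},\boldsymbol{\Upphi},\boldsymbol{\varphi},\boldsymbol{\phi})\in\tilde{S}(\mD^\star_\kappa\cup\Gamma_\kappa\cup\partial\mD^o_\kappa)$. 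Next, pick an open neighborhood $\mathcal{N}\subset\mathcal{B}$ of $\partial\mathcal{B}_t$ with $\overline{\mathcal{N}}\cap\overline{\mD_\kappa\cup\Gamma_\kappa}=\emptyset$. On $\mathcal{N}$ the equation $\nabla\sip\bC\colon\!\nabla\bv^*+\rho\omega^2\bv^*=\bzero$ holds with constant coefficients and no source, so by interior/boundary elliptic regularity for the Navier system with the free-traction condition on $\partial\mathcal{B}_t$, $\bv^*\in H^2(\mathcal{N})^3$ with norm controlled by $\|\bv^*\|_{H^1(\mathcal{N})^3}$, and hence by the data norm. The trace on $\partial\mathcal{B}_t$ therefore lives in $H^{3/2}(\partial\mathcal{B}_t)^3$.

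Finally, composing this continuous map $\tilde{S}(\mD^\star_\kappa\cup\Gamma_\kappa\cup\partial\mD^o_\kappa)\to H^{3/2}(\partial\mathcal{B}_t)^3$ with the compact Rellich embedding $H^{3/2}(\partial\mathcal{B}_t)^3\hookrightarrow L^2(\partial\mathcal{B}_t)^3$ yields compactness of $\mathcal{S}_\kappa^*$. The main obstacle I anticipate is technical rather than conceptual: one has to justify the claimed well-posedness and regularity of~\eqref{up_a} carefully near $\Gamma_\kappa$ (where the jumps $\boldsymbol{\varphi}$, the source $\boldsymbol{\Upphi}$, and the delamination portions of $\partial\mD^\star_\kappa$ interact), so that the reduction to the constant-coefficient problem on $\mathcal{N}$ actually delivers an $H^1$ control of $\bv^*$ independent of the local behavior of the data at the scatterers. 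Once that global $H^1$ estimate is in place, the compactness step is straightforward.
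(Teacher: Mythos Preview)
Your proof is correct. For the dense range, you take exactly the same route as the paper: duality with Lemma~\ref{InjH}. For compactness, however, the paper proceeds more directly: it invokes the integral representation~\eqref{BIE*} of $\mathcal{S}_\kappa^*$ (established in the course of proving Lemma~\ref{C(R(T))}) and observes that, since $\bxi\in\partial\mathcal{B}_t$ is separated from $\mD^\star_\kappa\cup\Gamma_\kappa\cup\partial\mD^o_\kappa$, the kernels $\bfG(\bxi,\by)$ and $\boldsymbol{T}(\bxi,\by)$ are smooth in both variables; compactness of integral operators with smooth kernels then finishes the argument in one line. Your route via wellposedness of~\eqref{up_a}, interior elliptic regularity on a collar $\mathcal{N}$, the trace theorem into $H^{3/2}(\partial\mathcal{B}_t)^3$, and Rellich embedding is a perfectly valid alternative that avoids the layer-potential machinery, at the cost of having to argue wellposedness and a global $H^1$ estimate for~\eqref{up_a} separately (which, as you correctly anticipate, is where the work lies). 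Both arguments rest on the same geometric fact---that the observation surface does not touch the scatterers---but extract it differently: the paper through kernel smoothness, you through a local regularity gain.
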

\begin{proof} 
The compactness of $\mathcal{S}_\kappa^*$ is established by the smooth kernels in its integral form~\eqref{BIE*}, and its dense range results from the injectivity of $\mathcal{S}_\kappa$ per Lemma~\ref{InjH}.
\end{proof}

\begin{assum}\label{ITPA}
Under Assumptions~\ref{tfc}-\ref{ITP_A1}, given $(\boldsymbol{\sf f}^\kappa, \boldsymbol{\sf g}^\kappa) \in H^{-1/2}(\partial\mD^\star_\kappa)^3 \nxs\times\nxs H^{1/2}(\partial\mD^\star_\kappa)^3$, consider the solution $(\boldsymbol{\sf u}^\kappa, \boldsymbol{\sf w}^\kappa) \in H^1(\mD^\star_\kappa)^3 \nxs\times\nxs H^1(\mD^\star_\kappa \! \setminus \! \overline{\Gamma_{\kappa}})^3$ to the interior transmission problem (ITP)   
\beq\nonumber
\hspace{-67mm}\text{ITP}(\mD^\star_\kappa, \Gamma_{\kappa}; \lbrace\bC,\rho\rbrace, \lbrace\bC_\kappa,\rho_\kappa\rbrace, \bK_{\kappa}; \boldsymbol{\sf f}^\kappa\nxs, \boldsymbol{\sf g}^\kappa) \colon
\vspace{-1.5mm}
\eeq
\beq\lb{ITPE}
\!\left\{\begin{array}{l} \!\!\!
\begin{aligned}
&\nabla \exs\sip\exs [\exs \bC_\kappa(\bxi) \exs \colon \! \nabla \boldsymbol{\sf w}^\kappa\exs](\bxi) \,+\, \rho_\kappa(\bxi) \exs \omega^2 \boldsymbol{\sf w}^\kappa(\bxi) ~=~ \bzero, \quad &  \bxi \exs \in \mD^\star_\kappa  \nxs\setminus\nxs \overline{\Gamma_{\kappa}},   \\*[0.0mm]
&\nabla \exs\sip\exs \bC \exs \colon \! \nabla \boldsymbol{\sf u}^\kappa(\bxi) \,+\, \rho \exs \omega^2 \boldsymbol{\sf u}^\kappa(\bxi) ~=~ \bzero, \quad &  \bxi \exs \in \mD^\star_\kappa, \\*[0.2mm]
&\bt[\boldsymbol{\xxs \sf w}^\kappa](\bxi) - \xxs\bt[\boldsymbol{\xxs \sf u}^\kappa](\bxi) ~=~ \boldsymbol{\sf f}^\kappa(\bxi),  \,\,\, [\boldsymbol{\sf w}^\kappa-\boldsymbol{\sf u}^\kappa](\bxi) ~=~ \boldsymbol{\sf g}^\kappa(\bxi), \quad  & \bxi \in \partial \mD^\star_\kappa  \setminus \overline{\Gamma_\kappa}, \\*[0.2mm]
&\bt[\boldsymbol{\xxs \sf w}^\kappa](\bxi) ~=~\bK_{\kappa}(\bxi) \llbracket \boldsymbol{\sf w}^\kappa \rrbracket(\bxi),  \,\,\, \llbracket \bt[\boldsymbol{\xxs \sf w}^\kappa] \rrbracket(\bxi)~=~\bzero,  \,\, & \bxi \in \Gamma_\kappa \cap  \mD^\star_\kappa,  \\*[0.2mm] 
&\bt[\boldsymbol{\xxs \sf w}^\kappa](\bxi) - \xxs\bt[\boldsymbol{\xxs \sf u}^\kappa](\bxi) ~=~ \boldsymbol{\sf f}^\kappa(\bxi),  \,\,\, \bt[\boldsymbol{\xxs \sf w}^\kappa](\bxi)~=~\bK_{\kappa}(\bxi) [\xxs\boldsymbol{\sf g}^\kappa\nxs+\boldsymbol{\sf u}^\kappa\nxs-\boldsymbol{\sf w}^\kappa](\bxi),  & \bxi \in \Gamma_\kappa \cap  \partial\mD^\star_\kappa,
\end{aligned} 
\end{array}\right.  
\eeq  
wherein
\beq\nonumber
\bt[\boldsymbol{\xxs \sf u}^\kappa](\bxi)~=~\bn(\bxi) \exs\sip\exs \bC \exs\xxs\colon\nxs \nabla\boldsymbol{\sf u}^\kappa(\bxi), \qquad 
\bt[\boldsymbol{\xxs \sf w}^\kappa](\bxi)~=~\bn(\bxi) \exs\sip\exs \bC_\kappa(\bxi) \exs\colon\nxs \nabla\boldsymbol{\sf w}^\kappa(\bxi), \qquad \bxi \in \Gamma_\kappa \cup  \partial\mD^\star_\kappa.
\eeq

It is assumed that $\forall t_\kappa$, $\omega$ is such that~\eqref{ITPE} remains wellposed i.e.,~for $(\boldsymbol{\sf f}^\kappa, \boldsymbol{\sf g}^\kappa) = \bzero$, the ITP does not admit a nontrivial solution $(\boldsymbol{\sf u}^\kappa, \boldsymbol{\sf w}^\kappa)$. For $\Gamma_\kappa = \emptyset$, the elastodynamics ITP, with varying restrictions on $\bC$ and $\bC_\kappa(\bxi)$, is analyzed by~\cite{char2002,bell2010,bell2013,cako2021} following the variational method introduced by~\cite{hahn2000}. In the most general case, under Assumptions~\ref{CP} and~\ref{ITP_A1},~\eqref{ITPE} with $\Gamma_\kappa = \emptyset$ is well-posed when $\omega$ does not belong to (at most) a countable set of transmission eigenvalues~\cite{bell2010}. The latter is also concluded in a recent study of acoustic ITP for penetrable obstacles with sound-hard cracks~\cite{audi2021}. Similar analysis could be applied to~\eqref{ITPE} which is beyond the scope of the present study.   
\end{assum}

\begin{lemma}\lb{I{T}>0}
Operator~$T_\kappa \colon {\mathfrak{S}}(H_{\Delta})  \rightarrow \, \tilde{S}(\mD^\star_\kappa \cup \Gamma_\kappa \nxs \cup \partial \mD^o_\kappa)$ in~(\ref{T}) is bounded and satisfies
 \beq\lb{pos-IT}
 \Im \langle{{T_\kappa \boldsymbol{\Xi}}, {\boldsymbol{\Xi}} \xxs \rangle}_{\mD^\star_\kappa \cup \exs \Gamma_\kappa \cup \exs \partial \mD^o_\kappa\nxs}  > \exs 0,
 \eeq
 $\forall \exs \boldsymbol{\Xi} \in {\mathfrak{S}}(H_{\Delta}) \! :~ \boldsymbol{\Xi} \neq \bzero$. Consequently, $T_\kappa$ is also injective provided that $\omega$ is not a transmission eigenvalue of~\eqref{ITPE} per Assumption~\ref{ITPA}. 
\end{lemma}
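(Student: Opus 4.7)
The operator $T_\kappa$ encodes the map from incident-like data $\Xi$ to the induced source terms $((\rho_\kappa-\rho)\omega^2\bw^\kappa,\, -(\bC_\kappa-\bC)\!\colon\!\nabla\bw^\kappa,\, \llbracket\btu\rrbracket,\, \bu^\kappa|_{\partial\mD^o_\kappa})$ associated with the scattered field. Any $\Xi\in\mathfrak{S}(H_{\Delta})$ is the restriction of a Navier solution $\hat{\bu}\in H_\Delta$ on $\mD^\star_\kappa$ augmented by prescribed tractions on $\Gamma_\kappa\setminus\overline{\mD^\star_\kappa}$ and $\partial\mD^o_\kappa$. The associated pair $(\bw^\kappa,\bu^\kappa)$, defined through a direct scattering problem analogous to~\eqref{uk} with $\bu^{\textrm{f}}$ replaced by $\hat{\bu}$, depends continuously on $\Xi$ by the wellposedness established in the supplementary material. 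Standard trace estimates then yield continuity of $T_\kappa\colon\mathfrak{S}(H_{\Delta})\to\tilde{S}(\mD^\star_\kappa\cup\Gamma_\kappa\cup\partial\mD^o_\kappa)$.

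\textbf{Non-negativity of the imaginary part.} Expand $\langle T_\kappa\Xi,\Xi\rangle$ via the duality~\eqref{DuP} into a bulk integral over $\mD^\star_\kappa\setminus\overline{\Gamma_\kappa}$ plus surface integrals on $\Gamma_\kappa$ and $\partial\mD^o_\kappa$. Apply Green's (Betti's) identity on $\mD^\star_\kappa\setminus\overline{\Gamma_\kappa}$ using the governing PDE of $\bw^\kappa$ in~\eqref{uk} together with the conjugated real-coefficient Navier equation satisfied by $\hat{\bu}$. The bulk terms involving the binder tensor $\bC$ cancel identically. The remaining boundary contributions are processed using~(i)~continuity of displacement and traction across $\partial\mD^\star_\kappa\!\setminus\!\overline{\Gamma_\kappa}$, (ii)~the contact relations $\bt=\bK_\kappa\llbracket\btu\rrbracket$ and $\llbracket\bt\rrbracket=\bzero$ on $\Gamma_\kappa$ (with a careful split into $\Gamma_\kappa\cap\mD^\star_\kappa$, $\Gamma_\kappa\cap\partial\mD^\star_\kappa$, and $\Gamma_\kappa\setminus\overline{\mD^\star_\kappa}$), and (iii)~the total traction-free condition on $\partial\mD^o_\kappa$. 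Real contributions drop upon taking the imaginary part, leaving
\[
\Im\langle T_\kappa\Xi,\Xi\rangle \,=\, -\!\!\int_{\mD^\star_\kappa\setminus\overline{\Gamma_\kappa}}\!\!\Im\!\big(\nabla\bw^\kappa\!\colon\!\bC_\kappa\!\colon\!\overline{\nabla\bw^\kappa}\big)\,\textrm{d}V \,-\, \int_{\Gamma_\kappa}\!\overline{\llbracket\btu\rrbracket}\cdot\Im\bK_\kappa\cdot\llbracket\btu\rrbracket\,\textrm{d}S,
\]
and both integrands are non-negative by Assumption~\ref{CP}.

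\textbf{Strict positivity via the ITP and injectivity.} Assume $\Im\langle T_\kappa\Xi,\Xi\rangle=0$. Then on the support of $\Im\bC_\kappa$ (resp.~$\Im\bK_\kappa$) one has $\nabla\bw^\kappa=\bzero$ (resp.~$\llbracket\btu\rrbracket=\bzero$), and by unique continuation across the piecewise-homogeneous components of $\mD^\star_\kappa$ the rigidity propagates. Reorganizing the residual data $T_\kappa\Xi=\bzero$ together with the PDEs governing $\hat{\bu}$ and $\bw^\kappa$ casts $(\hat{\bu}|_{\mD^\star_\kappa},\,\bw^\kappa)$ as a solution of the homogeneous interior transmission problem~\eqref{ITPE} with $(\boldsymbol{\sf f}^\kappa,\boldsymbol{\sf g}^\kappa)=(\bzero,\bzero)$. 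Under Assumption~\ref{ITPA}, $\omega$ is not an ITP eigenvalue, so $\hat{\bu}|_{\mD^\star_\kappa}=\bzero$ and $\bw^\kappa=\bzero$, killing the first three components of $\Xi$. The remaining traction component on $\partial\mD^o_\kappa$ then produces a homogeneous Neumann Navier problem on each simply-connected piece $\mD^o_{\kappa,j}$, which by Assumption~\ref{Frac} is only trivially satisfied. Hence $\Xi=\bzero$, contradicting $\Xi\neq\bzero$, and the injectivity of $T_\kappa$ is an immediate corollary of~\eqref{pos-IT}.

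\textbf{Main obstacle.} The most delicate step is the boundary-integral bookkeeping in Step~2: the three-way partition of $\Gamma_\kappa$ relative to $\mD^\star_\kappa$ must be treated consistently with the contact law $\bt=\bK_\kappa\llbracket\btu\rrbracket$ and the transmission conditions on $\partial\mD^\star_\kappa$ in order for the net surface contribution to collapse exactly to the single $\bK_\kappa$-absorption integral over $\Gamma_\kappa$. A secondary subtlety lies in Step~3, where the vanishing-absorption configuration must be re-identified as the exact homogeneous form of~\eqref{ITPE}, in particular matching the induced jump and traction data to $(\boldsymbol{\sf f}^\kappa,\boldsymbol{\sf g}^\kappa)=(\bzero,\bzero)$ so that Assumption~\ref{ITPA} can be invoked.
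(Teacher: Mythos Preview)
Your derivation of the imaginary-part identity in Step~2 is essentially the paper's formula~\eqref{T-bound}, obtained there by inserting the test function $\bv'=\bv^\kappa$ in $\mathcal{B}_\kappa^-$ and $\bv'=\bw^\kappa-\bu^{\textrm{f}}$ in $\mD^\star_\kappa\setminus\overline{\Gamma_\kappa}$ into the variational form~\eqref{Wik-GE}; your direct Green's-identity route is equivalent, and boundedness via wellposedness of~\eqref{vk} is likewise the same.

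The gap is in Step~3. From $\Im\langle T_\kappa\Xi,\Xi\rangle=0$ and Assumption~\ref{CP} you can only conclude $\nabla\bw^\kappa=\bzero$ on $\mathrm{supp}(\Im\bC_\kappa)$ and $\llbracket\btu\rrbracket=\bzero$ on $\mathrm{supp}(\Im\bK_\kappa)$. But Assumption~\ref{CP} expressly allows $\text{\sf v}_\kappa=0$ and $\Im\bK_\kappa=\bzero$ (purely elastic inclusions and contacts), in which case these supports are empty and your ``rigidity propagates'' step has nothing to propagate. You therefore cannot pass from $\Im=0$ to the assertion $T_\kappa\Xi=\bzero$ that you then use to set up the homogeneous ITP; the strict inequality~\eqref{pos-IT} is not obtainable from the absorption identity alone under the stated hypotheses.

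The paper does not attempt this route for injectivity. It simply observes that $T_\kappa\Xi=\bzero$ gives $\mathcal{V}_\kappa\Xi=\mathcal{S}_\kappa^*T_\kappa\Xi=\bzero$ and then invokes the injectivity of $\mathcal{V}_\kappa$ established in Lemma~\ref{comp_G}, whose proof casts $(\hat{\boldsymbol{\sf u}},\bw^\kappa)$ into the homogeneous ITP~\eqref{ITPE} starting from the correct hypothesis $\bv^\kappa|_{\partial\mathcal{B}_t}=\bzero$ (obtained via unique continuation), not from vanishing absorption. Your ITP argument is thus correct in spirit for the injectivity claim, provided you begin from $T_\kappa\Xi=\bzero$ rather than from $\Im\langle T_\kappa\Xi,\Xi\rangle=0$; the two hypotheses are not interchangeable here.
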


\begin{proof}
The well-posedness of~\eqref{vk} establish the boundedness of $T_\kappa$. Now, consider $\bv^\kappa$ satisfying~\eqref{Wik-GE} with $\big(\bu^{\textrm{f}}\big{|}_{\mD^\star_\kappa  \setminus  \overline{\Gamma_{\kappa}}}, \, \nabla \bu^{\textrm{f}}\big{|}_{\mD^\star_\kappa  \setminus  \overline{\Gamma_{\kappa}}}, \,\bt[\bu^{\textrm{f}}]\big{|}_{\Gamma_\kappa}, \, \bt[\bu^{\textrm{f}}]\big{|}_{\partial \mD^o_\kappa}\big)= \boldsymbol{\Xi}$. Taking $\bv' = \bv^\kappa$ in $\mathcal{B}^-_\kappa$ and $\bv' = \bw^\kappa - \bu^{\textrm{f}}$ in $\mD^\star_\kappa  \setminus  \overline{\Gamma_{\kappa}}$, observe from~\eqref{Wik-GE} that  
\beq\lb{T-bound}
 \Im \langle{{T_\kappa\boldsymbol{\Xi}}, {\boldsymbol{\Xi}} \rangle}_{\mD^\star_\kappa \cup \exs \Gamma_\kappa \cup \exs \partial \mD^o_\kappa\nxs} ~=\, -\Im(\int_{\mD^\star_\kappa  \setminus  \overline{\Gamma_{\kappa}}\exs}   \nabla \exs \bar{\bw}^\kappa \colon\nxs \bC_\kappa \exs\colon\! \nabla \bw^\kappa \, \textrm{d}V_{\bxi} ~+\, \int_{\Gamma_\kappa} \llbracket \bar{\bv}^\kappa \rrbracket \xxs\sip\xxs \bK_{\nxs\kappa} \llbracket \bv^\kappa \rrbracket  \, \textrm{d}S_{\bxi}),
\eeq
whereby~\eqref{pos-IT} follows immediately from Assumption~\ref{CP}. Now, let $T_\kappa{\boldsymbol{\Xi}} = \bzero$, then~\eqref{T-bound}, second of~\eqref{uk}, the unique continuation principle, and~\eqref{IE} imply that ${\mathcal{S}}_\kappa^* T_\kappa{\boldsymbol{\Xi}} = \bzero$. Then, Lemma~\ref{comp_G} reads ${\boldsymbol{\Xi}} = \bzero$ which proves the injectivity of $T_\kappa$. 
\end{proof}

\begin{lemma}\lb{comp_G}
Under Assumption~\ref{ITPA}, the operator $\mathcal{V}_\kappa = {\mathcal{S}}_\kappa^* T_\kappa \colon\nxs {\mathfrak{S}}(H_{\Delta}) \rightarrow L^2({\partial {\mathcal{B}_t}})^3$ is compact and injective with dense range. 
\end{lemma}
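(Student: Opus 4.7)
The plan is to treat compactness, dense range, and injectivity in turn, using the factorization $\mathcal{V}_\kappa=\mathcal{S}_\kappa^*T_\kappa$ and the properties of its two factors established above. Compactness is immediate: Lemma~\ref{H*p} gives compactness of $\mathcal{S}_\kappa^*$ and Lemma~\ref{I{T}>0} gives boundedness of $T_\kappa$, so the composition is compact.

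For the dense range I would exploit a duality argument tailored to the factorization. Take any $\boldsymbol{\eta}\in L^2(\partial\mathcal{B}_t)^3$ orthogonal to $\mathcal{R}(\mathcal{V}_\kappa)$; the adjoint identity in~\eqref{DuP} recasts the orthogonality $\langle\mathcal{S}_\kappa^*T_\kappa\boldsymbol{\Xi},\boldsymbol{\eta}\rangle_{\partial\mathcal{B}_t}=0$ as $\langle T_\kappa\boldsymbol{\Xi},\mathcal{S}_\kappa\boldsymbol{\eta}\rangle_{\mD_\kappa^\star\cup\Gamma_\kappa\cup\partial\mD_\kappa^o}=0$ for every $\boldsymbol{\Xi}\in\mathfrak{S}(H_\Delta)$. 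Lemma~\ref{C(R(T))} ensures $\mathcal{R}(\mathcal{S}_\kappa)\subset\mathfrak{S}(H_\Delta)$, so the specific choice $\boldsymbol{\Xi}=\mathcal{S}_\kappa\boldsymbol{\eta}$ is admissible and yields $\langle T_\kappa\mathcal{S}_\kappa\boldsymbol{\eta},\mathcal{S}_\kappa\boldsymbol{\eta}\rangle=0$. Taking imaginary parts and invoking the strict positivity~\eqref{pos-IT} forces $\mathcal{S}_\kappa\boldsymbol{\eta}=\bzero$, whereupon the injectivity of $\mathcal{S}_\kappa$ from Lemma~\ref{InjH} gives $\boldsymbol{\eta}=\bzero$.

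Injectivity is the most delicate ingredient. Assume $\mathcal{V}_\kappa\boldsymbol{\Xi}=\bzero$ and decompose $\boldsymbol{\Xi}$ in a form consistent with $\mathfrak{S}(H_\Delta)$, namely as $(\hat{\boldsymbol{\sf u}}|_{\mD_\kappa^\star\setminus\overline{\Gamma_\kappa}},\,\nabla\hat{\boldsymbol{\sf u}}|_{\mD_\kappa^\star\setminus\overline{\Gamma_\kappa}},\,\bt[\hat{\boldsymbol{\sf u}}]|_{\Gamma_\kappa\cap\overline{\mD_\kappa^\star}}\oplus\boldsymbol{\psi},\,\boldsymbol{\phi})$ with $\hat{\boldsymbol{\sf u}}\in H_\Delta$ and boundary densities $(\boldsymbol{\psi},\boldsymbol{\phi})$ supported on $\Gamma_\kappa\setminus\overline{\mD_\kappa^\star}$ and $\partial\mD_\kappa^o$. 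Combining the definition~\eqref{T} of $T_\kappa$ with the integral representation~\eqref{BIE*} identifies $\mathcal{V}_\kappa\boldsymbol{\Xi}$ with the trace on $\partial\mathcal{B}_t$ of a scattered field $\bv^\kappa$, built from the transmission scatterer $(\hat{\boldsymbol{\sf u}},\bw^\kappa)$ together with single- and double-layer potentials with densities $\boldsymbol{\phi}$ and $\boldsymbol{\psi}$. Since both Dirichlet and Neumann traces of $\bv^\kappa$ vanish on $\partial\mathcal{B}_t$, the unique continuation principle gives $\bv^\kappa\equiv\bzero$ in $\mathcal{B}_\kappa^-$. Following the layer-potential strategy of Lemma~\ref{C(R(T))}, the jump relations across $\partial\mD_\kappa^o$ combined with Assumption~\ref{Frac} (no interior Neumann eigenfrequency on each simply connected $\mD_{\kappa,j}^o$) force $\boldsymbol{\phi}=\bzero$, and an analogous argument on the auxiliary domains ${\sf D}_m$ yields $\boldsymbol{\psi}=\bzero$. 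The leftover pair $(\hat{\boldsymbol{\sf u}},\bw^\kappa)$ then satisfies the homogeneous form of the interior transmission problem~\eqref{ITPE}, so Assumption~\ref{ITPA} delivers $\hat{\boldsymbol{\sf u}}=\bzero$ in $\mD_\kappa^\star$, whence $\boldsymbol{\Xi}=\bzero$.

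The principal technical difficulty is the final reduction to a homogeneous ITP: one must propagate the vanishing of $\bv^\kappa$ in $\mathcal{B}_\kappa^-$, through the transmission conditions of~\eqref{vk}, into matching data on both $\partial\mD_\kappa^\star$ and $\Gamma_\kappa\cap\mD_\kappa^\star$, while cleanly separating the volumetric Lippmann--Schwinger contribution from the layer-potential pieces carrying $\boldsymbol{\psi}$ and $\boldsymbol{\phi}$. This calls for a careful integration-by-parts bookkeeping in the spirit of~\eqref{Wik-v}--\eqref{Wik-v3} used in the proof of Lemma~\ref{C(R(T))}.
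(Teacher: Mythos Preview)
Your compactness and injectivity arguments follow essentially the same path as the paper: compactness via the composition of a compact and a bounded map, and injectivity by observing that the scattered field $\bv^\kappa$ has vanishing Cauchy data on $\partial\mathcal{B}_t$, applying unique continuation to kill $\bv^\kappa$ in $\mathcal{B}_\kappa^-$, peeling off the densities $\boldsymbol{\psi},\boldsymbol{\phi}$, and then recognising that $(\hat{\boldsymbol{\sf u}},\hat{\boldsymbol{\sf u}}+\bv^\kappa)$ solves the homogeneous ITP~\eqref{ITPE}. One small imprecision: for $\boldsymbol{\psi}$ you do not need the auxiliary domains ${\sf D}_m$ of Assumption~\ref{Frac}. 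Since $\Gamma_\kappa\!\setminus\!\overline{\mD_\kappa^\star}$ lies entirely in $\mathcal{B}_\kappa^-$ on both sides, $\bv^\kappa=\bzero$ there forces $\llbracket\bv^\kappa\rrbracket=\bzero$ and $\bt[\bv^\kappa]=\bzero$, and the contact law (second of~\eqref{vk}) gives $\boldsymbol{\psi}=\bzero$ directly. The Neumann--eigenvalue exclusion is only needed for $\boldsymbol{\phi}$ on $\partial\mD_\kappa^o$.

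Your dense-range argument, however, is genuinely different from the paper's and worth noting. The paper proceeds by computing $\mathcal{V}_\kappa^*$ explicitly: it identifies $\mathcal{V}_\kappa^*(\boldsymbol{\uptau})$ with certain traces and restrictions of an auxiliary field $\bu^{\!\uptau}$ solving the adjoint transmission problem~\eqref{uta}, and then shows $\mathcal{V}_\kappa^*(\boldsymbol{\uptau})=\bzero$ forces $\bu^{\!\uptau}=\bzero$ by unique continuation, hence $\boldsymbol{\uptau}=\bzero$. Your route avoids this construction entirely: you take $\boldsymbol{\eta}\perp\mathcal{R}(\mathcal{V}_\kappa)$, move $\mathcal{S}_\kappa^*$ across the pairing, specialise to $\boldsymbol{\Xi}=\mathcal{S}_\kappa\boldsymbol{\eta}\in\mathfrak{S}(H_\Delta)$ (legitimate by Lemma~\ref{C(R(T))}), and then let the strict sign condition~\eqref{pos-IT} of Lemma~\ref{I{T}>0} together with the injectivity of $\mathcal{S}_\kappa$ do the work. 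This is shorter and exploits the already-proved positivity of $\Im T_\kappa$ rather than re-deriving a PDE for the adjoint; the paper's approach, by contrast, yields the explicit form~\eqref{uta2}--\eqref{uta} of $\mathcal{V}_\kappa^*$, which has independent value but is not strictly needed for the lemma.
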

\begin{proof}
Compactness of $\mathcal{V}_\kappa$ follows immediately from Lemmas~\ref{H*p} and~\ref{I{T}>0} which respectively establish the compactness of $\mathcal{S}_\kappa^*$ and the boundedness of $T_\kappa$. To demonstrate the injectivity of $\mathcal{V}_\kappa$, let 
\[
\big(\hat{\boldsymbol{\sf u}}\big{|}_{\mD^\star_\kappa  \setminus  \overline{\Gamma_{\kappa}}},\, \nabla \hat{\boldsymbol{\sf u}}\big{|}_{\mD^\star_\kappa  \setminus  \overline{\Gamma_{\kappa}}}, \,\bt[ \hat{\boldsymbol{\sf u}}] \big{|}_{\Gamma_\kappa \cap\exs \overline{\mD^\star_\kappa}} \oplus\boldsymbol{\psi}, \,\boldsymbol{\phi} \big) \in \mathfrak{S}(H_{\Delta}), \,\, (\boldsymbol{\psi}, \boldsymbol{\phi})\in H^{-1/2}(\Gamma_\kappa \nxs\!\setminus\! \overline{\mD^\star_\kappa})^3 \times {H^{-1/2}(\partial \mD^o_\kappa)^3},
\] 
such that $\mathcal{V}_\kappa(\hat{\boldsymbol{\sf u}}{|}_{\mD^\star_\kappa  \setminus  \overline{\Gamma_{\kappa}}},\, \nabla \hat{\boldsymbol{\sf u}}{|}_{\mD^\star_\kappa  \setminus  \overline{\Gamma_{\kappa}}}, \,\bt[ \hat{\boldsymbol{\sf u}}] {|}_{\Gamma_\kappa \cap\exs \overline{\mD^\star_\kappa}} \oplus\boldsymbol{\psi}, \,\boldsymbol{\phi}) = \bzero$. Since by definition $\bv^\kappa = \mathcal{V}_\kappa(\cdot)$ on $\partial \mathcal{B}_t$, one may observe that $\bv^\kappa$ satisfying~\eqref{vk} has trivial Dirichlet and Neumann traces on $\partial \mathcal{B}_t$ so that by the unique continuation principle $\bv^\kappa = \bzero$ in ${\mathcal{B}}^-_\kappa$. Now, from (a)~second and fifth of~\eqref{vk}, and (b)~Assumption~\ref{Frac} indicating that $\omega>0$ is not a Neumann eigenfrequency of the Navier equation affiliated with any simply-connected subset of $\mD^o_\kappa$, one may conclude that $\boldsymbol{\psi} = \boldsymbol{\phi} = \bzero$. Next, let us define ${\boldsymbol{\sf w}}^\kappa$ such that ${\boldsymbol{\sf w}}^\kappa = \hat{\boldsymbol{\sf u}} + \bv^\kappa$ in ${\mD^\star_\kappa}$, then the pair $(\hat{\boldsymbol{\sf u}}, {\boldsymbol{\sf w}}^\kappa)$ satisfies the interior transmission problem~\eqref{ITPE} with trivial boundary potentials i.e.,~$(\boldsymbol{\sf f}^\kappa, \boldsymbol{\sf g}^\kappa) = \bzero$. Since $\omega$ is not a transmission eigenvalue as per Assumption~\ref{ITPA}, on may deduce that $\hat{\boldsymbol{\sf u}} = \bzero$ in ${\mD^\star_\kappa  \setminus  \overline{\Gamma_{\kappa}}}$. Subsequently,~$\nabla \hat{\boldsymbol{\sf u}}=\bzero$ in ${\mD^\star_\kappa  \setminus  \overline{\Gamma_{\kappa}}}$, and $\bt[ \hat{\boldsymbol{\sf u}}] =\bzero$ on ${\Gamma_\kappa \cap\exs \overline{\mD^\star_\kappa}}$ which proves the injectivity of $\mathcal{V}_\kappa$.  

Denseness of the range of $\mathcal{V}_\kappa$ may be established by showing that the adjoint operator $\mathcal{V}^*_\kappa$ is injective. In this vein, from definition, $\mathcal{V}^*_\kappa \colon\nxs L^2({\partial {\mathcal{B}_t}})^3 \rightarrow \exs\tilde{S}(\mD^\star_\kappa \cup \Gamma_\kappa \nxs \cup \partial \mD^o_\kappa)$ is given by 
\beq\lb{uta2}
\mathcal{V}^*_\kappa(\boldsymbol{\uptau})~=~\big((\rho_\kappa -\rho) \xxs \omega^2 \bu^{\!\uptau}\big{|}_{\mD^\star_\kappa  \setminus  \overline{\Gamma_{\kappa}}}, - (\overline{\bC_{\kappa\nxs}}-\bC) \xxs\colon\! \nabla \bu^{\!\uptau}\big{|}_{\mD^\star_\kappa  \setminus  \overline{\Gamma_{\kappa}}}, \exs \llbracket \bu^{\!\uptau}\xxs \rrbracket\big{|}_{\Gamma_\kappa}, \exs\bu^{\!\uptau}\big{|}_{\partial \mD^o_\kappa} \big),
\eeq 
where $\bu^{\!\uptau} \in H^1({\mathcal{B}}_\kappa^{-}\nxs \cup \mD^\star_\kappa \!\setminus\!  \overline{\Gamma_{\kappa}})^3$ solves 
\beq\lb{uta}
\begin{aligned}
&\nabla \exs\sip\exs \bC \exs \colon \! \nabla \bu^{\!\uptau}(\bxi) \,+\, \rho \exs \omega^2 \bu^{\!\uptau}(\bxi) \,+\, \mathds{1}({\mD^\star_\kappa  \!\setminus\!  \overline{\Gamma_{\kappa}}}) [\xxs \text{\bf{f}\xxs}^{\uptau} \nxs+ \nabla \xxs\sip\xxs \boldsymbol{\upsigma\xxs}^{\!\uptau}](\bxi)~=~ \bzero, \quad &  \bxi \exs \in {\mathcal{B}}_\kappa^{-}\nxs \cup \mD^\star_\kappa \!\setminus\!  \overline{\Gamma_{\kappa}},   \\*[0.2mm]
&\bt[\bu^{\!\uptau}\xxs](\bxi) + \mathds{1}({\overline{\mD^\star_\kappa}  \nxs\cap\nxs  {\Gamma_{\kappa}}}) \exs \bn \exs\sip\exs \boldsymbol{\upsigma}^{\!\uptau}(\bxi)~=~\overline{\bK_{\kappa}}(\bxi) \llbracket \bu^{\!\uptau}\xxs \rrbracket(\bxi),  \hspace{-10mm}  & \bxi \in \Gamma_\kappa,  \\*[0.2mm] 
& \llbracket \bt[\bu^{\!\uptau}\xxs] + \mathds{1}({\mD^\star_\kappa  \nxs\cap\nxs  {\Gamma_{\kappa}}}) \xxs \bn \exs\sip\exs \boldsymbol{\upsigma}^{\nxs\uptau}(\bxi) \rrbracket(\bxi)~=~ \mathds{1}({\partial\mD^\star_\kappa  \nxs\cap\nxs  {\Gamma_{\kappa}}}) \exs \bn \exs\sip\exs \boldsymbol{\upsigma}^{\nxs\uptau}(\bxi),  \,\, & \bxi \in \Gamma_\kappa,  \\*[0.2mm] 
&\llbracket \bt[\bu^{\!\uptau}\xxs] \rrbracket(\bxi)~=~\bn \exs\sip\exs \boldsymbol{\upsigma}^{\nxs\uptau}(\bxi),  \,\,\, \llbracket \bu^{\!\uptau}\xxs \rrbracket(\bxi) ~=~\bzero, \qquad  & \bxi \in \partial \mD^\star_\kappa  \setminus \overline{\Gamma_\kappa}, \\*[0.2mm]
&\bt[\bu^{\!\uptau}\xxs](\bxi)~=\,\bzero,  \quad  & \bxi \in \partial \mD^o_\kappa, \\*[0.2mm]
&\bt[\bu^{\!\uptau}\xxs](\bxi)~=~\boldsymbol{\uptau},  \quad & \bxi \in \partial {\mathcal{B}}_t.
\end{aligned}   
\eeq 
where $\bt[\bu^{\!\uptau}\xxs] \exs\colon\!\!\!= \bn \exs\sip\exs \bC \exs \colon \!  \nabla  \bu^{\!\uptau}$, and 
$$ \text{\bf{f}\xxs}^{\uptau}(\bxi) \colon \!\!\!= (\rho_\kappa(\bxi) - \rho) \exs \omega^2 \bu^{\!\uptau}(\bxi), \qquad \boldsymbol{\upsigma\xxs}^{\!\uptau}(\bxi) \colon \!\!\!= (\overline{\bC_\kappa}(\bxi) - \bC) \exs \colon \! \nabla \bu^{\!\uptau}(\bxi), \qquad \bxi \in {\mD^\star_\kappa  \nxs\setminus\nxs  \overline{\Gamma_{\kappa}}}.$$
This may be observed by~(i)~premultiplying the first of~\eqref{vk} by $\overline{\bu^{\!\uptau}}$, and~(ii)~postmultiplying the conjugated first of~\eqref{uta} by $\bv^\kappa$. Integration by parts over ${\mathcal{B}}_\kappa^{-}\nxs \cup \mD^\star_\kappa \!\setminus\!  \overline{\Gamma_{\kappa}}$ followed by application of the contact condition over $\Gamma_\kappa \cup \partial \mD_\kappa$ and summation of the results yield  
\[ 
\begin{aligned}
 \overline{\big{\langle}\xxs {\bv}^\kappa, \boldsymbol{\uptau} \xxs\big{\rangle}}_{\partial {\mathcal{B}_t}\nxs} & ~=~\big{\langle} \big((\rho_\kappa -\rho) \xxs \omega^2 \bu^{\!\uptau}\big{|}_{\mD^\star_\kappa  \setminus  \overline{\Gamma_{\kappa}}}, - (\overline{\bC_{\kappa\nxs}}-\bC) \xxs\colon\! \nabla \bu^{\!\uptau}\big{|}_{\mD^\star_\kappa  \setminus  \overline{\Gamma_{\kappa}}}, \exs \llbracket \bu^{\!\uptau}\xxs \rrbracket\big{|}_{\Gamma_\kappa}, \exs\bu^{\!\uptau}\big{|}_{\partial \mD^o_\kappa} \big), \\*[0.2mm] 
  &\qquad\, \big(\bu^{\textrm{f}}\big{|}_{\mD^\star_\kappa  \setminus  \overline{\Gamma_{\kappa}}}, \nabla \bu^{\textrm{f}}\big{|}_{\mD^\star_\kappa  \setminus  \overline{\Gamma_{\kappa}}}, \exs\bt[\bu^{\textrm{f}}]\big{|}_{\Gamma_\kappa}, \exs\bt[\bu^{\textrm{f}}]\big{|}_{\partial \mD^o_\kappa}\big)  \big{\rangle}_{\mD^\star_\kappa \cup \exs \Gamma_\kappa \cup \exs \partial \mD^o_\kappa\nxs}, 
 \end{aligned}
\]
substantiating~\eqref{uta} as the adjoint operator. Next, let $\boldsymbol{\uptau} \in L^2({\partial {\mathcal{B}_t}})^3$ and assume that $\mathcal{V}^*_\kappa(\boldsymbol{\uptau}) = \bzero$, i.e.,
\[
\big(\bu^{\!\uptau}\big{|}_{\mD^\star_\kappa  \setminus  \overline{\Gamma_{\kappa}}}, \nabla \bu^{\!\uptau}\big{|}_{\mD^\star_\kappa  \setminus  \overline{\Gamma_{\kappa}}}, \exs \llbracket \bu^{\!\uptau}\xxs \rrbracket\big{|}_{\Gamma_\kappa}, \exs\bu^{\!\uptau}\big{|}_{\partial \mD^o_\kappa} \big) ~=~ \bzero.
\]
In this setting, the unique continuation reads $\bu^{\!\uptau} = \bzero$, and thus, $\boldsymbol{\uptau} = \bzero$ which concludes the proof.
 \end{proof}
 
\begin{assum}\label{Tass}
Under Assumptions~\ref{CP} and~\ref{ITP_A1}, one of the following applies:
\begin{description}
\item{$\bullet$\,\,\,}~$\mathfrak{R}(\bC_{\kappa\nxs}-\bC) - \alpha \xxs \mathfrak{I}(\bC_{\kappa\nxs})$ is positive definite on ${\mD^\star_\kappa  \nxs\setminus\nxs  \overline{\Gamma_{\kappa}}}$ for some constant $\alpha \geqslant 0$. \vspace{0.5mm}
\item{$\bullet$\,\,\,}~For some constants $\alpha,\eta > 0$,
\[
\overline{\text{\bf{X}}} \xxs \colon \! \mathfrak{R}(\bC-\bC_{\kappa\nxs})  \colon \! \text{\bf{X}} \exs\,\geqslant\, \alpha |\text{\bf{X}}|^2, \quad   \overline{\text{\bf{X}}}\xxs \colon \! \mathfrak{R}(\bC_{\kappa\nxs}) \colon \!  \text{\bf{X}} \,\geqslant\, \eta |\text{\bf{X}}|^2, \quad  \norms{\mathfrak{I}(\bC_{\kappa\nxs})}_{L^{\infty}} \,<\, \sqrt{\alpha\eta},
\]
on ${\mD^\star_\kappa  \!\setminus\!  \overline{\Gamma_{\kappa}}}$ for all $\text{\bf{X}}$ in $\mathbb{C}^{3\times\nxs 3}$.
\end{description}
\end{assum}

\begin{lemma}\lb{T-invs0}
Under Assumptions~\ref{tfc},~\ref{CP},~\ref{ITP_A1},~\ref{ITPA}, and~\ref{Tass}, the operator $T_\kappa \colon {\mathfrak{S}}(H_{\Delta}) \rightarrow \, \tilde{S}(\mD^\star_\kappa \cup \Gamma_\kappa \nxs \cup \partial \mD^o_\kappa)$ is coercive, i.e., there exists a constant $c\!>\!0$ independent of~$\boldsymbol{\Xi}$ such that
\beq\lb{co-T0}
|\langle {T_\kappa{\boldsymbol{\Xi}},{\boldsymbol{\Xi}}} \rangle| \,\,\geqslant\,\, \textrm{c} \nxs \norms{\boldsymbol{\Xi}}_{S(\mD^\star_\kappa \cup \Gamma_\kappa \nxs \cup \partial \mD^o_\kappa)}^2, \qquad \forall \boldsymbol{\Xi} \in {\mathfrak{S}}(H_{\Delta}).
\eeq 
\end{lemma}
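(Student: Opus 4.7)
The plan is to establish coercivity via a classical \emph{coercive-plus-compact} argument, combining the imaginary-part positivity supplied by Lemma~\ref{I{T}>0}, the real-part sign conditions in Assumption~\ref{Tass}, and the well-posedness of the ITP~\eqref{ITPE} (Assumption~\ref{ITPA}). First, starting from~\eqref{T} and~\eqref{DuP}, I would derive an explicit Green-type identity for $\langle T_\kappa\boldsymbol{\Xi},\boldsymbol{\Xi}\rangle$ by pairing system~\eqref{vk} with $\bar{\bu}^{\textrm{f}}$ on $\mD^\star_\kappa\!\setminus\!\overline{\Gamma_\kappa}$ and on $\mathcal{B}^-_\kappa$, and then collapsing the boundary/interface contributions using the contact laws. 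Its imaginary part is already controlled by~\eqref{T-bound}, while the real part carries the quadratic integrands $\nabla\bar{\bw}^\kappa:(\bC-\bC_\kappa):\nabla\bw^\kappa$ and $\omega^2(\rho-\rho_\kappa)|\bw^\kappa|^2$, together with interfacial jumps on $\Gamma_\kappa$ and traces on $\partial\mD^o_\kappa$.

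Next, I would exploit Assumption~\ref{Tass} to extract coercivity on the ``middle variables'' $(\bw^\kappa,\llbracket\bv^\kappa\rrbracket,\bu^\kappa)$. Under Case~(a), the inequality
\[
|\langle T_\kappa\boldsymbol{\Xi},\boldsymbol{\Xi}\rangle|\,\geqslant\,\frac{1}{\sqrt{1+\alpha^2}}\,\bigl|\Re\langle T_\kappa\boldsymbol{\Xi},\boldsymbol{\Xi}\rangle-\alpha\,\Im\langle T_\kappa\boldsymbol{\Xi},\boldsymbol{\Xi}\rangle\bigr|,
\]
combined with the pointwise positivity of $\mathfrak{R}(\bC_\kappa-\bC)-\alpha\,\mathfrak{I}(\bC_\kappa)$ and the sign of the $\bK_\kappa$ form (Assumption~\ref{CP}), should yield
\[
|\langle T_\kappa\boldsymbol{\Xi},\boldsymbol{\Xi}\rangle|\,\geqslant\,c\bigl(\|\nabla\bw^\kappa\|_{L^2(\mD^\star_\kappa\setminus\overline{\Gamma_\kappa})}^2+\|\llbracket\bv^\kappa\rrbracket\|_{\tilde H^{1/2}(\Gamma_\kappa)}^2+\|\bu^\kappa\|_{\tilde H^{1/2}(\partial\mD^o_\kappa)}^2\bigr)-K\|\bw^\kappa\|_{L^2}^2,
\]
in which the last summand is compact relative to $H^1(\mD^\star_\kappa\!\setminus\!\overline{\Gamma_\kappa})$. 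Case~(b) is handled by a Cauchy--Schwarz argument exploiting the constants $\alpha,\eta$, in the spirit of~\cite{bell2010}. To upgrade this to coercivity in $\|\boldsymbol{\Xi}\|_{S(\mD^\star_\kappa\cup\Gamma_\kappa\cup\partial\mD^o_\kappa)}$, I would argue by contradiction: take a sequence $\boldsymbol{\Xi}_n\in\mathfrak{S}(H_\Delta)$ with $\|\boldsymbol{\Xi}_n\|_S=1$ and $\langle T_\kappa\boldsymbol{\Xi}_n,\boldsymbol{\Xi}_n\rangle\to 0$; the preceding estimate, Rellich compactness, and the well-posedness of~\eqref{vk} yield (along a subsequence) $\bw^\kappa=\bzero$, $\llbracket\bv^\kappa\rrbracket=\bzero$, $\bu^\kappa=\bzero$. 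Then~\eqref{vk} forces $\bv^\kappa\equiv\bzero$ on $\mathcal{B}$, so $(\bu^{\textrm{f}}|_{\mD^\star_\kappa},\bw^\kappa)$ solves the homogeneous ITP~\eqref{ITPE}; Assumption~\ref{ITPA} delivers $\bu^{\textrm{f}}|_{\mD^\star_\kappa}=\bzero$, and Assumption~\ref{Frac} applied on $\mD^o_\kappa$ eliminates the remaining $\bt[\bu^{\textrm{f}}]$-traces, contradicting $\|\boldsymbol{\Xi}_n\|_S=1$.

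The main obstacle is the second step. The Green identity produced in Step~1 couples the volumetric, interfacial, and hard-obstacle contributions in a way that does not split cleanly, and the intersection portion $\Gamma_\kappa\cap\overline{\mD^\star_\kappa}$ (where $\bw^\kappa$-traces and cohesive jumps appear simultaneously) requires a careful trace-lifting argument to recover $\tilde H^{1/2}$-control of $\llbracket\bv^\kappa\rrbracket$ rather than a weaker $L^2$ bound. Keeping the compact remainder confined to $\|\bw^\kappa\|_{L^2(\mD^\star_\kappa)}$ -- so that the Rellich step in the contradiction argument closes -- is the technical crux of the proof.
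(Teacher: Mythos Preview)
Your contradiction strategy, together with the use of Assumption~\ref{ITPA} to kill the weak limit, is exactly the route the paper takes. The point of departure is your intermediate coercive-plus-compact inequality. As you yourself flag in the final paragraph, the bound
\[
|\langle T_\kappa\boldsymbol{\Xi},\boldsymbol{\Xi}\rangle|\;\geqslant\;c\Bigl(\|\nabla\bw^\kappa\|_{L^2}^2+\|\llbracket\bv^\kappa\rrbracket\|_{\tilde H^{1/2}(\Gamma_\kappa)}^2+\|\bu^\kappa\|_{\tilde H^{1/2}(\partial\mD^o_\kappa)}^2\Bigr)-K\|\bw^\kappa\|_{L^2}^2
\]
is not obtainable from the stated hypotheses: Assumption~\ref{CP} only gives $\overline{\boldsymbol{\varphi}}\!\cdot\!\Im\bK_\kappa\!\cdot\!\boldsymbol{\varphi}\leqslant 0$, which yields at best a weighted $L^2$ control of $\llbracket\bv^\kappa\rrbracket$ (and nothing at all if $\Im\bK_\kappa$ vanishes on part of $\Gamma_\kappa$), never an $\tilde H^{1/2}$ norm; likewise the pairing $\int_{\partial\mD^o_\kappa}\bar\bt[\bu^{\textrm f}]\!\cdot\!\bu^\kappa$ is a duality, not a coercive form in $\bu^\kappa$. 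So the ``preceding estimate'' you invoke in the contradiction step does not deliver strong convergence of the interface and pore contributions, and the argument stalls precisely where you anticipated.

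The paper sidesteps this by never isolating the surface terms. After establishing that the weak limit vanishes (via~\eqref{T-bound}, unique continuation, and Assumption~\ref{ITPA}), it writes down \emph{two} identities along the sequence: the form $\langle T_\kappa\boldsymbol{\Xi}_n,\boldsymbol{\Xi}_n\rangle$ itself (test function $\bar\bu^n$) and the variational identity~\eqref{Wik-GE} for the scattering problem with $\bv'=\bv^n$ (test function $\bar\bv^n$). In each, Rellich embedding and trace compactness annihilate every lower-order volumetric term and every surface integral, because the weak limit is zero. What survives after \emph{adding} the two relations is the clean decoupled quadratic form
\[
\int_{\mD^\star_\kappa\setminus\overline{\Gamma_\kappa}}\!\nabla(\bar\bu^n\!+\!\bar\bv^n):(\bC_\kappa\!-\!\bC):\nabla(\bu^n\!+\!\bv^n)\,\textrm{d}V\;+\;\int_{\mathcal{B}^-_\kappa\cup\mD^\star_\kappa\setminus\overline{\Gamma_\kappa}}\!\nabla\bar\bv^n:\bC:\nabla\bv^n\,\textrm{d}V\;\to\;0,
\]
in which the first block is the total field $\bw^n=\bu^n+\bv^n$ paired with itself through $\bC_\kappa-\bC$, and the second is $\bv^n$ paired with itself through $\bC$. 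Assumption~\ref{Tass} makes this sum sign-definite, forcing $\nabla\bw^n\to 0$ and $\nabla\bv^n\to 0$ strongly, hence $\nabla\bu^n\to 0$ strongly, and the contradiction with $\|\boldsymbol{\Xi}_n\|=1$ follows. The superposition is the device that replaces the $\tilde H^{1/2}$ trace-lifting you were looking for: rather than controlling the jumps, it pushes them into the compact remainder and lets them disappear with the weak limit.
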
 

\begin{proof}
We adopt a contradiction argument as follows. Suppose~\eqref{co-T0} does not hold, then one may find a sequence $({\boldsymbol{\Xi}_n})_{n \in \mathbb{N}} \subset {\mathfrak{S}}(H_{\Delta})$ such that
\beq\lb{SeQ}
\norms{{\boldsymbol{\Xi}_n}}_{{S}(\mD^\star_\kappa \cup \Gamma_\kappa \nxs \cup \partial \mD^o_\kappa)}\,\exs=~1, \quad |\langle {T_\kappa{\boldsymbol{\Xi}_n},{\boldsymbol{\Xi}_n}} \rangle| \,\to\, 0 \quad \text{as} \quad n \,\to\, \infty.
\eeq
Denote by $\bv^n \in H^1({\mathcal{B}}_\kappa^{-}\nxs \cup \mD^\star_\kappa \!\setminus\!  \overline{\Gamma_{\kappa}})^3$ the solution to~\eqref{vk} with 
\[
\begin{aligned}
&(\bu^{\textrm{f}} {|}_{\mD^\star_\kappa  \setminus  \overline{\Gamma_{\kappa}}},\nxs \nabla \bu^{\textrm{f}} {|}_{\mD^\star_\kappa  \setminus  \overline{\Gamma_{\kappa}}}, \bt[\bu^{\textrm{f}}]{|}_{\Gamma_\kappa}, \bt[\bu^{\textrm{f}}]{|}_{\partial \mD^o_\kappa})~=~\boldsymbol{\Xi}_n ~\colon\!\!\!=~ \\*[0.2mm]
& \hspace{2.5 cm}(\bu^n {|}_{\mD^\star_\kappa  \setminus\xxs  \overline{\Gamma_{\kappa}}}\, ,\nabla \bu^n {|}_{\mD^\star_\kappa  \setminus\xxs  \overline{\Gamma_{\kappa}}} \,, \bt[\bu^n]{|}_{\Gamma_\kappa \cap \exs\overline{\mD^\star_\kappa}} \xxs\oplus\xxs \boldsymbol{\psi}^n, \boldsymbol{\phi}^n),\\*[0.2mm]
& \nabla \exs\sip\exs \bC \exs \colon \! \nabla \bu^n \,+ \rho \exs \omega^2 \bu^n ~=~ \bzero \,\,\, \text{in $\mD^\star_\kappa$}, \quad (\boldsymbol{\psi}^n, \boldsymbol{\phi}^n) \in  H^{-1/2}(\Gamma_\kappa \nxs\!\setminus\! \overline{\mD^\star_\kappa})^3) \nxs\times\nxs {H^{-1/2}(\partial \mD^o_\kappa)^3}. 
\end{aligned}
\]
Elliptic regularity implies that $\norms{\bv^n}_{H^2({\mathcal{B}}_\kappa^{-}\nxs)^3}$ is bounded uniformly with respect to $n$. Then, up to changing the initial sequence, one may assume that $\boldsymbol{\Xi}_n$ weakly converges to some $\boldsymbol{\Xi}$ in ${{S}(\mD^\star_\kappa \cup \Gamma_\kappa \nxs \cup \partial \mD^o_\kappa)}$ and $\bv^n$ converges weakly in $H^2({\mathcal{B}}_\kappa^{-}\nxs)^3 \cap H^1(\mD^\star_\kappa \!\setminus\!  \overline{\Gamma_{\kappa}})^3$ to some $\bv \in H^2({\mathcal{B}}_\kappa^{-}\nxs)^3 \cap H^1(\mD^\star_\kappa \!\setminus\!  \overline{\Gamma_{\kappa}})^3$. Now, observe that $\bv$ satisfies~\eqref{vk} for 
\[
\boldsymbol{\Xi} ~=~ (\bu {|}_{\mD^\star_\kappa  \setminus  \overline{\Gamma_{\kappa}}},\nxs \nabla \bu {|}_{\mD^\star_\kappa  \setminus  \overline{\Gamma_{\kappa}}}, \bt[\bu]{|}_{\Gamma_\kappa \cap \exs\overline{\mD^\star_\kappa}} \xxs\oplus\xxs \boldsymbol{\psi}, \boldsymbol{\phi}), \quad (\boldsymbol{\psi}, \boldsymbol{\phi}) \in  H^{-1/2}(\Gamma_\kappa \nxs\!\setminus\! \overline{\mD^\star_\kappa})^3) \nxs\times\nxs {H^{-1/2}(\partial \mD^o_\kappa)^3}, 
\]
wherein $\nabla \exs\sip\exs \bC \exs \colon \! \nabla \bu \,+ \rho \exs \omega^2 \bu ~=~ \bzero$ in $\mD^\star_\kappa$. In this setting,~\eqref{T-bound} along with $|\langle {{T_\kappa\boldsymbol{\Xi}_n}, {\boldsymbol{\Xi}_n}} \rangle| \,\to\, 0$ imply that $\llbracket \bv^n \rrbracket \to \bzero$ on $\Gamma_\kappa$ and $\nabla(\bu^n\nxs+\bv^n) \to \bzero$ in $\mD^\star_\kappa \nxs\setminus\nxs  \overline{\Gamma_{\kappa}}$, whereby the first of~\eqref{vk} reads that $\bu^n\nxs+\bv^n \to \bzero$ in $\mD^\star_\kappa \nxs\setminus\nxs  \overline{\Gamma_{\kappa}}$. One may then deduce from the unique continuation principle that the total field $\bu^n\nxs+\bv^n$ also vanishes in $\mathcal{B}^-_\kappa$. In which case, \eqref{IE} indicates that $\bv^n \to \bzero$, and thus $\bv= \bzero$, on $\partial \mathcal{B}_t$. This implies that~(a)~$\boldsymbol{\psi} = \boldsymbol{\phi} = \bzero$ by virtue of the second and fifth of~\eqref{vk} and the unique continuation, and~(b)~$\bu = \bv = \bzero$ which follows from Assumption~\ref{ITPA}.  

Next, on recalling~\eqref{DuP} and~\eqref{T},~\eqref{SeQ} may be recast as  
\beq\lb{SeQ2}
\begin{aligned}
&\langle {T_\kappa{\boldsymbol{\Xi}_n},{\boldsymbol{\Xi}_n}} \rangle \,=\, -\int_{\mD^\star_\kappa  \setminus  \overline{\Gamma_{\kappa}}} \big[\xxs \nabla \bar{\bu}^n \colon \! (\bC_{\kappa\nxs}-\bC) \xxs\colon \! \nabla (\bu^n\nxs+\bv^n) \xxs + \exs \omega^2(\rho -\rho_\kappa) \xxs \bar{\bu}^n \!\cdot\nxs (\bu^n\nxs+\bv^n)  \xxs \big]  \textrm{d}V ~+\,  \\*[0.2mm]   
&   \int_{\Gamma_\kappa \cap \exs\overline{\mD^\star_\kappa}} \bar{\bt}[\bu^n] \cdot \llbracket \bv^n \rrbracket  \, \textrm{d}S  \,+\, \int_{\Gamma_\kappa \nxs\!\setminus \overline{\mD^\star_\kappa}} \bar{\boldsymbol{\psi}}^n \nxs\cdot \llbracket \bv^n \rrbracket \, \textrm{d}S \,+\, \int_{\partial \mD^o_\kappa} \bar{\boldsymbol{\phi}}^n \! \cdot  \nxs (\bu_{\boldsymbol{\phi}}^n \nxs+\bv^n) \, \textrm{d}S.
\end{aligned}
\eeq   
where $\bu_{\boldsymbol{\phi}}^n$ solves 
\beq\lb{uphi}
\begin{aligned}
&\nabla \exs\sip\exs \bC \exs \colon \! \nabla \bu_{\boldsymbol{\phi}}^n(\bxi) \,+\, \rho \exs \omega^2 \bu_{\boldsymbol{\phi}}^n(\bxi) ~=~ \bzero, \quad & \bxi \in \mD^o_\kappa, \\*[0.0mm]
&\bn \exs\sip\exs \bC \exs \colon \!  \nabla  \bu_{\boldsymbol{\phi}}^n(\bxi)~=~{\boldsymbol{\phi}}^n(\bxi),  \quad & \bxi \in {\partial \mD^o_\kappa}.
\end{aligned}   
\eeq  
In addition, the variational form~\eqref{Wik-GE} with $\bv^\kappa=\bv' = \bv^n$ reads
\beq\lb{Wik-GE2}
\begin{aligned}
& \int_{\mD^\star_\kappa  \setminus  \overline{\Gamma_{\kappa}}} \big[\xxs \nabla \bar{\bv}^n \colon \! (\bC_{\kappa\nxs}-\bC) \xxs\colon \! \nabla (\bu^n\nxs+\bv^n) \xxs + \exs \omega^2(\rho -\rho_\kappa) \bar{\bv}^n \!\cdot\nxs (\bu^n\nxs+\bv^n)  \xxs \big]  \textrm{d}V   ~-\, \\*[0.2mm]  
-&   \int_{\Gamma_\kappa \cap \exs\overline{\mD^\star_\kappa}} {\bt}[\bu^n] \cdot \llbracket \bar{\bv}^n \rrbracket  \, \textrm{d}S  \,-\, \int_{\Gamma_\kappa \nxs\!\setminus \overline{\mD^\star_\kappa}} \boldsymbol{\psi}^n \nxs\cdot \llbracket \bar{\bv}^n \rrbracket \, \textrm{d}S \,-\, \int_{\partial \mD^o_\kappa} \boldsymbol{\phi}^n \! \cdot \bar{\bv}^n \, \textrm{d}S ~=\, \\*[0.2mm]  
 -& \int_{{\mathcal{B}}_\kappa^- \cup\xxs  \mD^\star_\kappa\nxs  \setminus  \overline{\Gamma_{\kappa}}}  \big[   \nabla \exs \bar{\bv}^n \colon\!\xxs \bC \exs\colon\! \nabla \bv^n \exs-\,  \rho \xxs  \omega^2 \exs \bar{\bv}^n \xxs\sip\exs \bv^n  \big] \, \textrm{d}V ~-\, \int_{\Gamma_\kappa} \llbracket \bar{\bv}^n \rrbracket \xxs\sip\xxs \bK_{\nxs\kappa} \llbracket \bv^n \rrbracket  \, \textrm{d}S. 
\end{aligned}
\eeq

Since $|\langle {T_\kappa{\boldsymbol{\Xi}_n},{\boldsymbol{\Xi}_n}} \rangle| \,\to\, 0$ as $n \,\to\, \infty$,~\eqref{SeQ2} in light of the Rellich compact embedding theorem along with the regularity of the trace operator implies that  
\beq\lb{SeQ3}
\int_{\mD^\star_\kappa  \setminus  \overline{\Gamma_{\kappa}}} \nabla \bar{\bu}^n \colon \! (\bC_{\kappa\nxs}-\bC) \xxs\colon \! \nabla (\bu^n\nxs+\bv^n) \xxs \textrm{d}V \,\to\,\, 0 \quad \text{as} \quad n \,\to\, \infty.
\eeq

Similarly, the compact embedding and trace theorems applied to~\eqref{Wik-GE2} reads 
\beq\lb{Wik-GE3}
\begin{aligned}
& \int_{\mD^\star_\kappa  \setminus  \overline{\Gamma_{\kappa}}} \nabla \bar{\bv}^n \colon \! (\bC_{\kappa\nxs}-\bC) \xxs\colon \! \nabla (\bu^n\nxs+\bv^n) \xxs \textrm{d}V   ~+~\nxs  \int_{{\mathcal{B}}_\kappa^- \cup\xxs  \mD^\star_\kappa\nxs  \setminus  \overline{\Gamma_{\kappa}}}  \nabla \exs \bar{\bv}^n \colon\!\xxs \bC \exs\colon\! \nabla \bv^n \xxs \textrm{d}V  \,\to\,\, 0
\end{aligned}
\eeq
as $n \,\to\, \infty$. On superimposing~\eqref{SeQ3} and~\eqref{Wik-GE3}, one finds 
\beq\nonumber
\begin{aligned}
& \int_{\mD^\star_\kappa  \setminus  \overline{\Gamma_{\kappa}}} \nabla (\bar{\bu}^n\nxs+\bar{\bv}^n) \colon \! (\bC_{\kappa\nxs}-\bC) \xxs\colon \! \nabla (\bu^n\nxs+\bv^n) \xxs \textrm{d}V   ~+~\nxs  \int_{{\mathcal{B}}_\kappa^- \cup\xxs  \mD^\star_\kappa\nxs  \setminus  \overline{\Gamma_{\kappa}}}  \nabla \exs \bar{\bv}^n \colon\!\xxs \bC \exs\colon\! \nabla \bv^n \xxs \textrm{d}V  \,\to\,\, 0
\end{aligned}
\eeq
as $n \,\to\, \infty$. Now, following the first of Assumption~\ref{Tass}, where $\mathfrak{R}(\bC_{\kappa\nxs}-\bC) - \alpha \xxs \mathfrak{I}(\bC_{\kappa\nxs})$ is positive definite on ${\mD^\star_\kappa  \nxs\setminus\nxs  \overline{\Gamma_{\kappa}}}$ for some constant $\alpha \geqslant 0$, observe that 
\beq\lb{Wik-GE5}
\begin{aligned}
& \Big|\int_{\mD^\star_\kappa  \setminus  \overline{\Gamma_{\kappa}}} \nabla (\bar{\bu}^n\nxs+\bar{\bv}^n) \colon \! (\bC_{\kappa\nxs}-\bC) \xxs\colon \! \nabla (\bu^n\nxs+\bv^n) \xxs \textrm{d}V   \,+\,\nxs  \int_{{\mathcal{B}}_\kappa^- \cup\xxs  \mD^\star_\kappa\nxs  \setminus  \overline{\Gamma_{\kappa}}}  \nabla \exs \bar{\bv}^n \colon\!\xxs \bC \exs\colon\! \nabla \bv^n \xxs \textrm{d}V\Big| \,\geqslant  \\*[0.2mm]  
& \hspace{2.5cm} \theta \Big(\int_{\mD^\star_\kappa  \setminus  \overline{\Gamma_{\kappa}}} \big|\nabla (\bu^n\nxs+\bv^n)\big|^2 \xxs \textrm{d}V   \,+\,\nxs  \int_{{\mathcal{B}}_\kappa^- \cup\xxs  \mD^\star_\kappa\nxs  \setminus  \overline{\Gamma_{\kappa}}}  \big|\nabla \bv^n\big|^2 \xxs \textrm{d}V\Big)
\end{aligned}
\eeq
for some $\theta>0$ independent of $n$. This implies that $\bv^n \,\to\, \bzero$ strongly in $H^1({{\mathcal{B}}_\kappa^- \cup\xxs  \mD^\star_\kappa\nxs  \setminus  \overline{\Gamma_{\kappa}}})^3$ which is a contradiction. Given~\eqref{SeQ3} and~\eqref{Wik-GE3}, the argument for establishing~\eqref{co-T0} for the second case of Assumption~\ref{Tass} directly follows the proof of Theorem 2.42 in~\cite{cako2016}.
\end{proof}

\begin{lemma}\lb{T-FI0}
Under Assumptions~\ref{ITPA} and~\ref{Tass}, the real part of operator $T_\kappa \colon {\mathfrak{S}}(H_{\Delta}) \rightarrow \, \tilde{S}(\mD^\star_\kappa \cup \Gamma_\kappa \nxs \cup \partial \mD^o_\kappa)$ may be decomposed on ${\mathfrak{S}}(H_{\Delta})$ into a coercive part $T^\pm_\circ$ and a compact part $T_\text{c}$. 
\end{lemma}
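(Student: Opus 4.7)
The plan is to split the sesquilinear form $\langle \Re(T_\kappa) \boldsymbol{\Xi}, \boldsymbol{\Xi}\rangle$ into its ``principal'' piece, whose sign is controlled by Assumption~\ref{Tass}, and a ``remainder'' built from either $L^2$-type mass contributions or couplings in which the scattered field $\bv^\kappa$ appears with one derivative less than the incident field $\bu^{\textrm{f}}$. Starting from~\eqref{SeQ2} with $\boldsymbol{\Xi}=(\bu^{\textrm{f}}|_{\mD^\star_\kappa\setminus\overline{\Gamma_\kappa}},\nabla\bu^{\textrm{f}}|_{\mD^\star_\kappa\setminus\overline{\Gamma_\kappa}},\bt[\bu^{\textrm{f}}]|_{\Gamma_\kappa\cap\overline{\mD^\star_\kappa}}\oplus\boldsymbol{\psi},\boldsymbol{\phi})$ and invoking~\eqref{Wik-GE2} with $\bv'=\bv^\kappa$, I would write (up to taking real parts)
\[
\langle T_\kappa\boldsymbol{\Xi},\boldsymbol{\Xi}\rangle \;=\; -\!\!\int_{\mD^\star_\kappa\setminus\overline{\Gamma_\kappa}}\!\!\nabla(\bar\bu^{\textrm{f}}+\bar\bv^\kappa)\colon(\bC_\kappa-\bC)\colon\nabla(\bu^{\textrm{f}}+\bv^\kappa)\,\textrm{d}V \;-\!\!\int_{{\mathcal{B}}_\kappa^-\cup\mD^\star_\kappa\setminus\overline{\Gamma_\kappa}}\!\!\nabla\bar\bv^\kappa\colon\bC\colon\nabla\bv^\kappa\,\textrm{d}V \;+\; \mathcal{R}(\boldsymbol{\Xi}),
\]
where $\mathcal{R}$ collects (i) the mass terms $\omega^2(\rho_\kappa-\rho)|\bu^{\textrm{f}}+\bv^\kappa|^2$ and $\omega^2\rho|\bv^\kappa|^2$, (ii) the trace contributions on $\Gamma_\kappa$ involving $\bK_\kappa\llbracket\bv^\kappa\rrbracket$ and $\boldsymbol{\psi}\cdot\llbracket\bv^\kappa\rrbracket$, and (iii) the Dirichlet trace contributions on $\partial\mD_\kappa^o$ involving $\boldsymbol{\phi}\cdot(\bu_{\boldsymbol{\phi}}^{\textrm{f}}+\bv^\kappa)$ with $\bu^{\textrm{f}}_{\boldsymbol{\phi}}$ solving~\eqref{uphi}.

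Accordingly, define $T_\circ^\pm\colon{\mathfrak{S}}(H_\Delta)\to\tilde S(\mD^\star_\kappa\cup\Gamma_\kappa\cup\partial\mD^o_\kappa)$ as the operator whose associated quadratic form is the sum of the two principal integrals above (with the sign chosen to match the case of Assumption~\ref{Tass} in force), and let $T_\text{c}\colon=\Re(T_\kappa)-T_\circ^\pm$. For the coercive part, I would argue verbatim as in the first half of the proof of Lemma~\ref{T-invs0}: under the first alternative of Assumption~\ref{Tass}, $\mathfrak{R}(\bC_\kappa-\bC)$ is uniformly positive (\emph{resp}.\ negative) definite on $\mD^\star_\kappa\setminus\overline{\Gamma_\kappa}$, hence the form is bounded below by $\theta\|\nabla(\bu^{\textrm{f}}+\bv^\kappa)\|_{L^2}^2+\theta\|\nabla\bv^\kappa\|_{L^2}^2$; combined with the $H^1$--coercivity of $\nabla\bar\bv^\kappa\colon\bC\colon\nabla\bv^\kappa$ (Korn/G\r{a}rding) on the baseline component, one recovers the full $S(\mD^\star_\kappa\cup\Gamma_\kappa\cup\partial\mD^o_\kappa)$--norm of $\boldsymbol{\Xi}$ after invoking Assumption~\ref{Frac} to control $(\boldsymbol{\psi},\boldsymbol{\phi})$ by the Neumann traces of $\bv^\kappa$. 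The second alternative of Assumption~\ref{Tass} is treated by the same complex-linear combination used in Theorem~2.42 of~\cite{cako2016}.

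For the compact part, each contribution in $\mathcal{R}$ is of strictly lower order than the principal form. The volumetric mass terms are compact on ${\mathfrak{S}}(H_\Delta)$ by the Rellich embedding $H^1\hookrightarrow L^2$; the coupling $\mathfrak{R}\!\int\nabla\bu^{\textrm{f}}\colon(\bC_\kappa-\bC)\colon\nabla\bv^\kappa$ is compact thanks to the elliptic-regularity gain $\bv^\kappa\in H^2$ on a neighbourhood of $\overline{\mD^\star_\kappa}\setminus\overline{\Gamma_\kappa}$ (which transfers via a weakly-convergent-implies-strongly-convergent argument on one factor); the jump integrals over $\Gamma_\kappa$ and $\partial\mD^o_\kappa$ are compact because $\llbracket\bv^\kappa\rrbracket$ and $\bv^\kappa|_{\partial\mD^o_\kappa}$ are traces of $H^1$ functions and hence lie in $H^{1/2}$, which embeds compactly into $L^2$ and dually into the $\tilde H^{-1/2}$-duals appearing in~\eqref{DuP}; finally the $\boldsymbol{\phi}\cdot\bu^{\textrm{f}}_{\boldsymbol{\phi}}$ term is compact because the solution map of~\eqref{uphi} is bounded from $H^{-1/2}(\partial\mD^o_\kappa)^3$ into $H^1(\mD^o_\kappa)^3$ while the pairing is a trace pairing. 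The main obstacle I anticipate is the coupling term $\int \nabla\bu^{\textrm{f}}\colon(\bC_\kappa-\bC)\colon\nabla\bv^\kappa$: here neither factor is a priori more regular than $L^2$, and one needs the continuous dependence $\boldsymbol{\Xi}\mapsto\bv^\kappa$ given by the well-posedness of~\eqref{vk} together with the interior $H^2$-regularity of $\bv^\kappa$ to gain a factor that embeds compactly, thereby converting weak convergence of $\boldsymbol{\Xi}_n$ into strong convergence of the relevant factor and securing compactness of $T_\text{c}$.
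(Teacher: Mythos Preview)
Your approach is essentially the same as the paper's: combine the explicit expression~\eqref{SeQ2} for $\langle T_\kappa\boldsymbol{\Xi},\boldsymbol{\Xi}\rangle$ with the variational identity~\eqref{Wik-GE2} to isolate a gradient-dominated principal part, then show the residual is compact via Rellich embeddings and trace regularity. Two points are worth tightening.

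First, your phrase ``with the sign chosen to match the case of Assumption~\ref{Tass} in force'' understates what the paper actually does. The paper derives \emph{two} structurally different principal operators by, respectively, \emph{subtracting} and \emph{adding} the variational identity from/to the direct formula. Subtraction gives $T_\circ^-$ with principal form $\int\nabla(\bar{\boldsymbol{\sf u}}'+\bar{\boldsymbol{\sf v}}')\!:\!(\bC_\kappa-\bC)\!:\!\nabla(\hat{\boldsymbol{\sf u}}+\hat{\boldsymbol{\sf v}})+\int\nabla\bar{\boldsymbol{\sf v}}'\!:\!\bC\!:\!\nabla\hat{\boldsymbol{\sf v}}$, suited to the first alternative of Assumption~\ref{Tass}; addition gives $T_\circ^+$ with a different split, $\int\nabla\bar{\boldsymbol{\sf u}}'\!:\!(\bC-\bC_\kappa)\!:\!\nabla\hat{\boldsymbol{\sf u}}+\int\nabla\bar{\boldsymbol{\sf v}}'\!:\!\bC_\kappa\!:\!\nabla\hat{\boldsymbol{\sf v}}$ plus antisymmetric cross terms, suited to the second alternative (and handled, as you say, via~\cite[Theorem~2.42]{cako2016}). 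It is not merely a sign flip of a single formula.

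Second, the paper explicitly augments both $T_\circ^\pm$ by the $L^2$ mass term $\int_{\mD^\star_\kappa\setminus\overline{\Gamma_\kappa}}\bar{\boldsymbol{\sf u}}'\!\cdot\hat{\boldsymbol{\sf u}}\,\textrm{d}V$. Without it, your principal form controls only $\|\nabla(\bu^{\textrm{f}}+\bv^\kappa)\|_{L^2}^2+\|\nabla\bv^\kappa\|_{L^2}^2$, which does not by itself dominate $\|\bu^{\textrm{f}}\|_{L^2(\mD^\star_\kappa)}^2$ as required by the $S$-norm; the Korn/G\r{a}rding remark you invoke for $\bv^\kappa$ does not supply this. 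Since the term is compact anyway, you can freely move it from $T_\text{c}$ into $T_\circ^\pm$, but it should appear there for the coercivity claim to read cleanly.
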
 

\begin{proof}
See~Appendix B.
\end{proof}

\begin{lemma}\lb{FF_op}
The scattering operator $\Lambda_\kappa: L^2(\partial {\mathcal{B}_t})^3 \to\exs L^2(\partial {\mathcal{B}_t})^3$ is injective, compact and has a dense range.  
\end{lemma}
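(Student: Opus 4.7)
The plan is to read all three properties off the factorization $\Lambda_\kappa = \mathcal{V}_\kappa \mathcal{S}_\kappa = \mathcal{S}_\kappa^* T_\kappa \mathcal{S}_\kappa$ constructed in Section~\ref{Sc1}, invoking Lemmas~\ref{InjH}--\ref{comp_G} in sequence. The heavy lifting has already been done: $\mathcal{V}_\kappa = \mathcal{S}_\kappa^* T_\kappa$ is compact and injective on $\mathfrak{S}(H_{\Delta})$ with dense range in $L^2(\partial \mathcal{B}_t)^3$ (Lemma~\ref{comp_G}), $\mathcal{S}_\kappa$ is injective (Lemma~\ref{InjH}), and $\overline{\mathcal{R}(\mathcal{S}_\kappa)} = \mathfrak{S}(H_{\Delta})$ (Lemma~\ref{C(R(T))}).

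First, for compactness, $\mathcal{S}_\kappa : L^2(\partial \mathcal{B}_t)^3 \to S(\mD^\star_\kappa \cup \Gamma_\kappa \cup \partial \mD^o_\kappa)$ is bounded as a straightforward consequence of wellposedness of~\eqref{uf} together with standard trace estimates, so composition with the compact operator $\mathcal{V}_\kappa$ immediately yields compactness of $\Lambda_\kappa$.

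For injectivity, I would take $\btau \in L^2(\partial \mathcal{B}_t)^3$ with $\Lambda_\kappa \btau = \bzero$, so that $\mathcal{V}_\kappa(\mathcal{S}_\kappa \btau) = \bzero$. Since $\mathcal{S}_\kappa \btau \in \mathcal{R}(\mathcal{S}_\kappa) \subset \mathfrak{S}(H_{\Delta})$ lies in the domain on which $\mathcal{V}_\kappa$ is injective, this forces $\mathcal{S}_\kappa \btau = \bzero$, and the injectivity of $\mathcal{S}_\kappa$ from Lemma~\ref{InjH} then gives $\btau = \bzero$.

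For the dense range, I would combine the density of $\mathcal{R}(\mathcal{S}_\kappa)$ in $\mathfrak{S}(H_{\Delta})$ with continuity of $\mathcal{V}_\kappa$ on $\mathfrak{S}(H_{\Delta})$ to obtain
\[
\mathcal{V}_\kappa\big(\mathfrak{S}(H_{\Delta})\big) \,=\, \mathcal{V}_\kappa\big(\overline{\mathcal{R}(\mathcal{S}_\kappa)}\big) \,\subset\, \overline{\mathcal{V}_\kappa(\mathcal{R}(\mathcal{S}_\kappa))} \,=\, \overline{\mathcal{R}(\Lambda_\kappa)},
\]
after which taking closures and invoking the dense range of $\mathcal{V}_\kappa$ yields $L^2(\partial \mathcal{B}_t)^3 = \overline{\mathcal{V}_\kappa(\mathfrak{S}(H_{\Delta}))} \subset \overline{\mathcal{R}(\Lambda_\kappa)}$. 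I do not anticipate any substantive obstacle: the delicate ingredients --- wellposedness of~\eqref{vk}, compactness of $\mathcal{S}_\kappa^*$ via unique continuation, and the interior transmission argument that rules out kernel elements of $\mathcal{V}_\kappa$ --- are already packaged into the preceding lemmas, so the proof reduces to a short composition argument whose only point of care is checking that $\mathcal{V}_\kappa$ is indeed continuous in the topology relative to which $\mathfrak{S}(H_{\Delta})$ is the closure of $\mathcal{R}(\mathcal{S}_\kappa)$.
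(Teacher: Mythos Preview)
Your proposal is correct and follows essentially the same composition argument as the paper, reading injectivity and compactness off the factorization $\Lambda_\kappa = \mathcal{V}_\kappa \mathcal{S}_\kappa$ via Lemmas~\ref{InjH} and~\ref{comp_G}. The only cosmetic difference is in the dense-range step: the paper invokes the adjoint criterion, observing that $\Lambda_\kappa^* = \mathcal{S}_\kappa^* \mathcal{V}_\kappa^*$ is injective because both factors are, whereas you use the equivalent closure-under-continuity argument; neither route requires any idea beyond what is already packaged in the preceding lemmas.
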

\begin{proof}
The injectivity (\emph{resp.}~compactness) of $\Lambda_\kappa = {\mathcal{V}_\kappa} \exs \mathcal{S}_\kappa$ results from the injectivity (\emph{resp.}~compactness) of ${\mathcal{V}_\kappa}$ and $\mathcal{S}_\kappa$ as per Lemmas~\ref{T-invs0} and~\ref{C(R(T))}. Now, according to Lemmas~\ref{H*p} and~\ref{T-invs0}, the adjoint operators $\mathcal{S}^*_\kappa$, ${\mathcal{V}^*_\kappa}$, and thus $\Lambda^*_\kappa = \mathcal{S}^*_\kappa \exs {\mathcal{V}^*_\kappa}$ are injective which establish the denseness of the range of $\Lambda_\kappa$.   
\end{proof}

\section{Design of imaging functionals}\label{DEI}
Let us generate a set of sampling points ${\bx_\circ} \in \mathcal{B}$ in the baseline model designating the loci of (monopole and dipole) trial scatterers. Monopole signatures are created via point sources applied along a set of trial directions ${\bn}$, while dipole patterns are constructed by nucleating dislocations $L \colon\!\!=\bx_\circ\!+\bR{\sf L} \subset \mathcal{B}$ in the baseline model wherein~$\sf L$ is a smooth arbitrary-shaped discontinuity whose orientation is given by the unitary rotation matrix $\bR\nxs\in\nxs U(3)$. In this setting, the scattering pattern $\bPsi^{o} \colon \tilde{H}^{1/2}(L)^3 \rightarrow L^2(\partial \mathcal{B}_t)^3$ is defined by
\beq\lb{RHS}
\bPsi^{o}(\bxi)~\nxs\colon\!\!\!=~(1-o) \exs \bn \exs\sip\exs \boldsymbol{G}(\bxi,\bx_\circ) \exs+\exs o \int_{L} \ba(\by) \cdot \boldsymbol{T}(\bxi,\by) \, \textrm{d}S_{\by}, \quad o \in \lbrace 0,1 \rbrace, \,\, \bxi \in \partial \mathcal{B}_t,
\eeq
for any admissible density $\ba \in \tilde{H}^{1/2}(L)^3$. Keep in mind that the Green's dyadic~$\boldsymbol{G}$ satisfies~\eqref{Grf} and its affiliated traction $\boldsymbol{T}$ on $L$ is specified in~\eqref{IE}. 

To construct a sampling-based imaging functional, we deploy $\bPsi^{o}$ to explore the range of $\Lambda_\kappa$ by minimizing the below sequence of cost functions
\beq\lb{GCf}
\mathfrak{J}_\kappa^\gamma(\bPsi^{o};\,\btau) ~:=~ \norms{\nxs \Lambda_\kappa\exs \btau\,-\,\bPsi^{o} \nxs}_{L^2(\partial \mathcal{B}_t)}^2 +~ \gamma\, \big(\btau, \Lambda_{\kappa_\sharp}\btau\big)_{L^2(\partial \mathcal{B}_t)}, \quad \btau\in L^2(\partial \mathcal{B}_t)^3, \,\,\, \gamma>0,
\eeq
where $\Lambda_{\kappa_\sharp} \colon  L^2(\partial \mathcal{B}_t)^3  \rightarrow L^2(\partial \mathcal{B}_t)^3$ is given by 
\beq\lb{Fs}
\Lambda_{\kappa_\sharp}\,\colon \!\!\!=\, \frac{1}{2} |\Lambda_{\kappa}+\Lambda_{\kappa}^*| \:+\: \frac{1}{2 \textrm{\emph{i}}} (\Lambda_{\kappa}-\Lambda_{\kappa}^*).
\eeq

\begin{rem}
Lemmas~\ref{H*p},~\ref{I{T}>0},~\ref{comp_G},~\ref{T-FI0} establish the premises of~\cite[Theorem 2.15]{kirs008} which concludes that operator $\Lambda_{\kappa_\sharp}$ is positive and has the following factorization
\vspace{-1 mm}
\beq\lb{facts2}
\Lambda_{\kappa_\sharp} ~=~ \mathcal{S}_{\kappa}^* \exs T_{\kappa_\sharp} \exs \mathcal{S}_{\kappa}, 
\vspace{-1 mm}
\eeq
where the middle operator $T_{\kappa_\sharp}$ is selfadjoint and positively coercive, i.e., there exists a constant~$\text{\emph{c}}\!>\!0$ independent of~${\boldsymbol{\Xi}}$ so that 
\beq\lb{co-T}
\langle {T_{\kappa_\sharp}{\boldsymbol{\Xi}}, {\boldsymbol{\Xi}}} \xxs \rangle \,\,\geqslant\,\, \text{\emph{c}} \nxs \norms{\boldsymbol{\Xi}}_{S(\mD^\star_\kappa \cup \Gamma_\kappa \nxs \cup \partial \mD^o_\kappa)}^2, \qquad \forall \boldsymbol{\Xi} \in {\mathfrak{S}}(H_{\Delta}).
\eeq 

 Moreover, the range of $\mathcal{S}_\kappa^*$ coincides with that of $\Lambda_{\kappa_\sharp}^{1/2}$.
\end{rem}

Assumptions and lemmas of Section~\ref{SFS} furnish all the necessary conditions for the fundamental theorems of GLSM~\cite{AudiDLSM,pour2020} to apply. These results are required for the differential imaging indicators which for future reference are included in the following.    

\begin{theorem}[\!\cite{AudiDLSM,pour2020}] \lb{GLSM1}
Consider the minimizing sequence $\btau^\gamma \in L^2(\partial \mathcal{B}_t)^3$ for $\mathfrak{J}_\kappa^\gamma$ such that   
\vspace{-1mm}
\beq\lb{mseq1}
\mathfrak{J}_\kappa^\gamma(\bPsi^{o};\btau^\gamma) \,\leqslant \, \mathfrak{j}_\kappa^\gamma(\bPsi^{o}) + \eta(\gamma), \qquad \gamma > 0, \vspace{-1mm}
\eeq
where $\eta(\gamma)/\gamma \rightarrow 0$ as $\gamma \rightarrow 0$ and
\vspace{-1mm}
\[
\mathfrak{j}_\kappa^\gamma(\bPsi^{o}) ~\colon\!\!\!=~\!\! \inf\limits_{\btau \in L^2(\partial \mathcal{B}_t)^3} \! \mathfrak{J}_\kappa^\gamma(\bPsi^{o};\btau). 
\vspace{-4mm}
\]

Then,  
\vspace{-1mm}
\beq\lb{statG}
\begin{aligned}
& \bPsi^{o} \in Range(\mathcal{V}_\kappa) ~~\Rightarrow~~ \lim\limits_{\gamma \rightarrow 0} \big(\exs \btau^\gamma, \Lambda_{\kappa_\sharp}\btau^\gamma \exs \big) \, < \, \infty, \\
&\bPsi^{o} \notin Range(\mathcal{V}_\kappa) ~~\Rightarrow~~ \liminf\limits_{\gamma \rightarrow 0} \big(\exs \btau^\gamma, \Lambda_{\kappa_\sharp}\btau^\gamma \exs \big) \, = \, \infty.
\end{aligned}
\vspace{-2mm}
\eeq

Moreover, when $\mathcal{V}_\kappa \exs \boldsymbol{\Xi} =  \bPsi^{o}$, the sequence $\mathcal{S}_\kappa \btau^\gamma$ strongly converges to $\boldsymbol{\Xi} \in {\mathfrak{S}}(H_{\Delta})$ as $\gamma \rightarrow 0$. 
\end{theorem}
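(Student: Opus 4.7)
The plan is to reduce the statement to a direct application of the abstract GLSM framework of~\cite{AudiDLSM,pour2020}. What needs to be verified is that the scattering operators constructed in Sections~\ref{Sc1} and~\ref{SFS} fit into that framework. The required ingredients are already in place: the factorizations $\Lambda_\kappa = \mathcal{S}_\kappa^* T_\kappa \mathcal{S}_\kappa$ from~\eqref{fact} and $\Lambda_{\kappa_\sharp} = \mathcal{S}_\kappa^* T_{\kappa_\sharp} \mathcal{S}_\kappa$ from~\eqref{facts2}; the operator $\mathcal{S}_\kappa$ has dense range in $\mathfrak{S}(H_\Delta)$ (Lemma~\ref{C(R(T))}) and $\mathcal{S}_\kappa^*$ is compact (Lemma~\ref{H*p}); $T_\kappa$ is bounded and injective (Lemma~\ref{I{T}>0}), coercive (Lemma~\ref{T-invs0}) with real part decomposable into a coercive plus a compact operator (Lemma~\ref{T-FI0}); and $T_{\kappa_\sharp}$ is self-adjoint and positively coercive per~\eqref{co-T}. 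Once these ingredients are assembled, the following pointwise lower bound plays the central role:
\[
(\btau,\Lambda_{\kappa_\sharp}\btau) \,=\, \langle T_{\kappa_\sharp}\mathcal{S}_\kappa\btau,\mathcal{S}_\kappa\btau\rangle \,\geqslant\, \text{c}\,\|\mathcal{S}_\kappa\btau\|^{2}_{\mathfrak{S}(H_\Delta)},
\]
which is obtained directly from~\eqref{co-T} and controls the penalty term in~\eqref{GCf} by the norm of $\mathcal{S}_\kappa\btau$.

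For the first implication in~\eqref{statG}, I assume $\bPsi^{o} = \mathcal{V}_\kappa\boldsymbol{\Xi}_0 = \mathcal{S}_\kappa^*(T_\kappa\boldsymbol{\Xi}_0)$ and exploit the denseness of the range of $\mathcal{S}_\kappa$ in $\mathfrak{S}(H_\Delta)$ (Lemma~\ref{C(R(T))}) to build a sequence $\btau_n \in L^2(\partial\mathcal{B}_t)^3$ with $\mathcal{S}_\kappa\btau_n \to \boldsymbol{\Xi}_0$. Then $\Lambda_\kappa\btau_n \to \bPsi^{o}$ strongly while $(\btau_n,\Lambda_{\kappa_\sharp}\btau_n)$ stays bounded by boundedness of $T_{\kappa_\sharp}$. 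Using $\btau_n$ as a test element in~\eqref{GCf} yields $\mathfrak{j}_\kappa^\gamma(\bPsi^{o}) \to 0$ as $\gamma \to 0$, which combined with~\eqref{mseq1} and the penalty term in $\mathfrak{J}_\kappa^\gamma$ forces $\gamma(\btau^\gamma,\Lambda_{\kappa_\sharp}\btau^\gamma)$ to vanish and $(\btau^\gamma,\Lambda_{\kappa_\sharp}\btau^\gamma)$ to remain finite.

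For the contrapositive of the second implication, I argue by contradiction: if $\liminf_{\gamma\to 0}(\btau^\gamma,\Lambda_{\kappa_\sharp}\btau^\gamma) < \infty$, the coercivity bound above ensures $\mathcal{S}_\kappa\btau^\gamma$ is bounded in $\mathfrak{S}(H_\Delta)$ along a subsequence, hence admits a weak limit $\boldsymbol{\Xi}^\star \in \mathfrak{S}(H_\Delta)$. Compactness of $\mathcal{S}_\kappa^*$ (Lemma~\ref{H*p}) and boundedness of $T_\kappa$ then upgrade this to $\Lambda_\kappa\btau^\gamma = \mathcal{S}_\kappa^* T_\kappa(\mathcal{S}_\kappa\btau^\gamma) \to \mathcal{V}_\kappa\boldsymbol{\Xi}^\star$ strongly in $L^2(\partial\mathcal{B}_t)^3$. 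Because the data-fidelity term $\|\Lambda_\kappa\btau^\gamma - \bPsi^{o}\|^2 \leqslant \mathfrak{J}_\kappa^\gamma \to 0$, we would conclude $\bPsi^{o} = \mathcal{V}_\kappa\boldsymbol{\Xi}^\star \in Range(\mathcal{V}_\kappa)$, contradicting the hypothesis.

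The main technical obstacle is the final claim on strong convergence $\mathcal{S}_\kappa\btau^\gamma \to \boldsymbol{\Xi}$ in $\mathfrak{S}(H_\Delta)$. Weak convergence is already in hand from the previous paragraph; to promote it to strong convergence one combines the coercive-plus-compact decomposition $\Re T_\kappa = T_\circ^\pm + T_\text{c}$ of Lemma~\ref{T-FI0} with the identification $\mathcal{V}_\kappa\boldsymbol{\Xi}^\star = \bPsi^{o} = \mathcal{V}_\kappa\boldsymbol{\Xi}$ (whence injectivity of $\mathcal{V}_\kappa$ via Lemma~\ref{comp_G} forces $\boldsymbol{\Xi}^\star=\boldsymbol{\Xi}$), and passes to the limit in $\langle T_\kappa\mathcal{S}_\kappa\btau^\gamma, \mathcal{S}_\kappa\btau^\gamma\rangle$ by separating the compact part (which converges under weak convergence) from the coercive part (which gives strong convergence of norms). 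This step follows the standard GLSM argument in~\cite{AudiDLSM,pour2020} verbatim once the preceding properties are verified, so the substance of the proof lies in the verification already carried out in Section~\ref{SFS}.
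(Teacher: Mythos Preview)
Your proposal is correct and matches the paper's approach: the paper does not prove this theorem independently but simply states that the assumptions and lemmas of Section~\ref{SFS} furnish the hypotheses needed for the abstract GLSM theorems of~\cite{AudiDLSM,pour2020} to apply, then quotes the result. Your write-up does exactly this verification and additionally sketches the underlying GLSM argument, which is more detail than the paper itself provides but entirely in keeping with its intent.
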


\begin{coro}\lb{GLSM-ITP} 
Under Assumptions~\ref{EigFT} and~\ref{ITPA}, 
\[
\bPsi^{o} \in Range(\mathcal{V}_\kappa) \, \iff\, \bx_\circ \in \mD_\kappa \cup \Gamma_\kappa.
\] 
In addition, if 
\begin{itemize}
\item~$\bx_\circ \in \mD_\kappa^\star$ then there exists a unique solution $(\bu^\star_\kappa, \bw^\star_\kappa)$ to 
\beq\lb{ITP*}
\text{ITP}_\kappa ~\colon\!\!\!=~ \text{ITP}(\mD^\star_\kappa, \Gamma_{\kappa}; \lbrace\bC,\rho\rbrace, \lbrace\bC_\kappa,\rho_\kappa\rbrace, \bK_{\kappa}; \bn \xxs\sip\xxs \bC \xxs\colon \!\nxs  \nabla  \bPsi^{o}, \bPsi^{o}).\eeq
\item~$\bx_\circ \in \mD_\kappa^o$ then there exists a unique field $\boldsymbol{\sf u}^o_\kappa$ satisfying
\beq\lb{ITPo}
\begin{aligned}
&\nabla \xxs \sip \xxs (\bC \exs\colon \! \nabla \boldsymbol{\sf u}^o_\kappa) \,+\, \rho \exs \omega^2 \boldsymbol{\sf u}^o_\kappa ~=~ \bzero     \quad  &\textrm{in}~ \mD_\kappa^o, \\*[1 mm]
&\bn \xxs\sip\xxs \bC \xxs\colon \!\nxs  \nabla  (\boldsymbol{\sf u}^o_\kappa +\xxs \bPsi^{o}) ~=~ \bzero   \quad &\textrm{on}~ \partial \mD_\kappa^o.
\end{aligned}
\eeq
\item~$L\subset\Gamma_{\kappa} \!\setminus\! \overline{\mD^\star_\kappa}$ then there exists a unique $\llbracket \boldsymbol{\sf v}_{\kappa} \rrbracket \in\tilde{H}^{1/2}(\Gamma_{\kappa} \!\setminus\! \overline{\mD^\star_\kappa})^3$ such that $\mathcal{S}_{\kappa}^* \llbracket \boldsymbol{\sf v}_{\kappa} \rrbracket = \bPsi^{1}$. In this setting, the affiliated free-field traction $\bt[\bu^{\sf{f}}_\kappa]$ may be obtained from the second of~\eqref{vk},
\beq\lb{tufs}
\bt[\bu^{\sf{f}}_\kappa](\bxi)  ~=~  \bK_{\kappa}(\bxi) \llbracket \boldsymbol{\sf v}_{\kappa} \rrbracket(\bxi)  \,-\, \bn \xxs\sip\xxs \bC \xxs\colon \!\nxs  \nabla \bPsi^{1}(\bxi),  \qquad \bxi \in \Gamma_{\kappa} \!\setminus\! \overline{\mD^\star_\kappa}.
\eeq
\end{itemize}
\end{coro}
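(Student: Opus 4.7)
The plan is to establish the stated equivalence as a singular-source argument built on the factorization $\mathcal{V}_\kappa = \mathcal{S}_\kappa^* T_\kappa$ together with the integral representation~\eqref{IE}, which identifies $\mathcal{V}_\kappa(\boldsymbol{\Xi})\big|_{\partial \mathcal{B}_t}$ with the boundary trace of the scattered field $\bv^\kappa$ driven by the incident data encoded in $\boldsymbol{\Xi}$. Once this identification is in place, each enumerated subcase reduces to reading off which elementary interior problem the preimage $\boldsymbol{\Xi}$ must solve.

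For the forward implication ($\bx_\circ \in \mD_\kappa \cup \Gamma_\kappa \Rightarrow \bPsi^{o} \in \mathrm{Range}(\mathcal{V}_\kappa)$) I split into the three listed scenarios. If $\bx_\circ \in \mD_\kappa^\star$, Assumption~\ref{ITPA} yields the unique ITP solution $(\bu^\star_\kappa, \bw^\star_\kappa)$ of~\eqref{ITP*}; I then set $\boldsymbol{\Xi}$ to the natural assembly of traces of $\bu^\star_\kappa$ over $\mD_\kappa^\star \setminus \overline{\Gamma_\kappa}$, $\Gamma_\kappa$, and $\partial \mD_\kappa^o$, and verify via Betti's reciprocity and the ITP transmission/jump conditions that the volumetric contrast contributions in~\eqref{IE} collapse into exactly $\bPsi^{o}$. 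If $\bx_\circ \in \mD_\kappa^o$, Assumption~\ref{Frac} guarantees a unique $\boldsymbol{\sf u}^o_\kappa$ satisfying~\eqref{ITPo}, and the analogous construction with nontrivial data only on the $\partial \mD_\kappa^o$ slot reproduces $\bPsi^{o}$ through the surface integral in~\eqref{IE}. If $L \subset \Gamma_\kappa \setminus \overline{\mD_\kappa^\star}$, I would observe that $\bPsi^{1}$ is already a double-layer-type potential over $L$, so setting $\llbracket \boldsymbol{\sf v}_\kappa \rrbracket$ equal to $\ba$ extended by zero onto $\Gamma_\kappa \setminus \overline{\mD_\kappa^\star}$ gives $\mathcal{S}_\kappa^* \llbracket \boldsymbol{\sf v}_\kappa \rrbracket = \bPsi^{1}$ directly, with~\eqref{tufs} following from the second line of~\eqref{vk}.

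The reverse implication proceeds by contradiction. If $\mathcal{V}_\kappa \boldsymbol{\Xi} = \bPsi^{o}$, then both $\bv^\kappa$ and $\bPsi^{o}$ satisfy the homogeneous Navier equation in $\mathcal{B}_\kappa^-$ and, thanks to the reciprocity of $\boldsymbol{G}$ combined with the Neumann condition in~\eqref{Grf}, share both Dirichlet and traction-free Neumann traces on $\partial \mathcal{B}_t$. The unique continuation principle propagates this equality through the connected component of $\mathcal{B}_\kappa^-$ containing $\partial \mathcal{B}_t$. Under the assumption $\bx_\circ \notin \overline{\mD_\kappa \cup \Gamma_\kappa}$ (resp.\ $L \not\subset \overline{\mD_\kappa \cup \Gamma_\kappa}$), $\bv^\kappa$ remains regular at the source locus while $\bPsi^{o}$ retains its Green-function or layer singularity there, yielding the contradiction. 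Uniqueness in each enumerated case follows from the well-posedness of the underlying ITP or Neumann problem together with the injectivity of $\mathcal{S}_\kappa^*$ on jumps supported in $\Gamma_\kappa \setminus \overline{\mD_\kappa^\star}$ provided by Lemma~\ref{C(R(T))}.

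The main obstacle is the verification step in the $\bx_\circ \in \mD_\kappa^\star$ subcase: one must rearrange~\eqref{IE} using the ITP transmission conditions on both $\partial \mD_\kappa^\star$ and $\Gamma_\kappa \cap \overline{\mD_\kappa^\star}$ so that the contrast terms $(\bC_\kappa - \bC)$ and $(\rho_\kappa - \rho)$ regroup into a clean representation reproducing the singular datum $\bPsi^{o}$. The dipole-on-fracture case is also delicate because the density must belong to $\tilde{H}^{1/2}(\Gamma_\kappa \setminus \overline{\mD_\kappa^\star})^3$, forcing careful treatment of the extension-by-zero of $\ba$ across branch points of $\Gamma_\kappa$.
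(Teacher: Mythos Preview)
Your proposal is correct and follows the standard singular-source argument that underlies the GLSM range characterization. The paper does not provide an explicit proof of this corollary; it is stated as a direct consequence of the GLSM framework in~\cite{AudiDLSM,pour2020} once the operator properties of Section~\ref{SFS} are in place, and the way the result is \emph{used} in the proof of Theorem~\ref{Lxo} (e.g., identifying $(\bu^\star_s,\bw^\star_s)=(-\bPsi^{o},\bzero)$ away from the component containing $\bx_\circ$, and $\bt[\bu_s^o]=-\bt[\bPsi^o]$ on $\partial\mD_s^o$) matches exactly the preimages you construct. Your forward-direction construction via the ITP/Neumann solutions and the layer density, and your reverse-direction unique-continuation-plus-singularity argument, are precisely the mechanism the cited references supply; the obstacle you flag in the $\bx_\circ\in\mD_\kappa^\star$ case is real but routine once the ITP transmission conditions of~\eqref{ITPE} are applied in the representation~\eqref{IE}.
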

Now, let us recall from~\eqref{HT} that~(a)~$\mE^\star_i \subset \mathcal{B}$, $i \in \lbrace 1,2, \ldots \rbrace$, is the support of (volumetric) elastic transformation where by Assumption~\ref{ITP_A1} $\overline{\text{supp}(\bC_{i} - \bC_{i-1})} \exs=\, \overline{\text{supp}(\rho_{i} - \rho_{i-1})}$, and $\mD^\star_{i} = \mE^\star_i \cup \mD^\star_{i-1}$ since $\mD^\star_{i-1} \subset \mD^\star_{i}$,~(b)~$\mE^o_i$ designates the evolution of pore volume which is disjoint from $\mE^\star_i$ since $\overline{\mD^\star_i} \cap \overline{\mD^\circ_i} = \emptyset$, and~(c)~$\hat\Gamma_i \cup \tilde\Gamma_i$ represents the support of (geometric $\hat\Gamma_i$ and elastic $\tilde\Gamma_i$) interfacial evolution. On denoting by $\mD^\star_{i-1,j}$, $j = 1,2,\ldots, N_{i-1}$~(\emph{resp.}~$\mD^\star_{i,j}$, $j = 1,2,\ldots, N_{i}$) the simply connected components of $\mD^\star_{i-1}$ (\emph{resp.}~$\mD^\star_{i}$), one may define the set of stationary inclusions 
\beq\lb{stsc*}
\check{\mD}_{i-1}^\star \exs=\exs \bigcup_{j \exs\in\exs \iota} \mD^\star_{i-1,j}, \quad \iota \,=\, \big\lbrace \exs  j \exs {|} \exs \exists  \varkappa \,\, \mD^\star_{i-1,j} \exs =\exs \mD^\star_{i,\varkappa} ~\land~ \overline{\textcolor{white}{\tilde{1}\!\!}\mD_{i-1,j}^\star} \cap \overline{\mE^\star_i \cup \hat\Gamma_i \cup \tilde\Gamma_i} \exs =\exs \emptyset \big\rbrace,
\eeq   
which remain unchanged between $[t_{i-1} \,\, t_{i}]$. By adopting a similar notation, the stationary pores are identified by
\beq\lb{stsco}
\check{\mD}_{i-1}^o \exs=\exs \bigcup_{j \exs\in\exs \iota} \mD^o_{i-1,j}, \quad \iota \,=\, \big\lbrace \exs  j \exs {|} \exs \exists  \varkappa \,\, \mD^o_{i-1,j} \exs=\exs \mD^o_{i,\varkappa} \big\rbrace.
\eeq          

In this setting, $\tilde{\mD}_{i-1}^\tau \colon\!\!\! = {\mD}_{i-1}^\tau \!\setminus\nxs \overline{\check{\mD}_{i-1}^\tau}$, $\tau = \lbrace \star, o \rbrace$, signifies the evolved subset of ${\mD}_{i-1}^\tau$ within the same timeframe. Further, one may introduce 
\beq\lb{evsc}
\tilde{\mD}_{i}^\tau = \bigcup_{j \exs\in\exs \tilde{\iota}} \mD^\tau_{i,j}, \quad \tilde{\iota} \,=\, \big\lbrace \exs  j \exs {|} \exs \overline{\textcolor{white}{\tilde{1}\!\!}\mD^\tau_{i,j}} \cap \overline{\tilde{\mD}_{i-1}^\tau} \exs\neq\exs \emptyset \big\rbrace, \quad \tau = \lbrace \star, o \rbrace,
\eeq
so that $\mE^\tau_i$ may be decomposed into disjoint subsets $\overline{\tilde{\mE}^\tau_i} = \overline{\textcolor{white}{\tilde{1}\!\!}\mE^\tau_i} \cap \overline{\tilde{\mD}_{i}^\tau}$ and $\hat{\mE}^\tau_i = {\mD}_{i}^\tau \!\setminus\! \lbrace \check{\mD}_{i-1}^\tau \cup \tilde{\mD}_{i}^\tau \rbrace$. Based in this, let us in addition define
\beq\lb{chgam}
\check{\Gamma}_{i-1}  ~=~ \Gamma_{i-1} \!\setminus \overline{\tilde{\Gamma}_{i} \cup \tilde{\mD}^\star_{i} \cup \mE^\star_{i} \cup \mE^o_{i}}.
\eeq  

While our objective is to design imaging functionals to reconstruct $\mE^\star_i \cup \mE^o_i \cup \hat\Gamma_i \cup \tilde\Gamma_i$ given sequential sensory data at $t_{i-1}$ and $t_i$, one may observe in what follows that the proposed indicator is capable of recovering either $\tilde{\mD}_{i-1}^o \cup \tilde{\mD}_{i-1}^\star \cup \tilde{\Gamma}_i$ or $\mE^\star_i \cup \mE^o_i \cup \hat\Gamma_i \cup \tilde\Gamma_i \cup \tilde{\mD}_{i-1}^\star \cup \tilde{\mD}_{i-1}^o$. 

\begin{assum}\lb{Inc*}
Let us define ${(}\tilde{\bC}, \tilde{\rho}\xxs{)}$ in $ \tilde{\mD}_{i}^\star$ by
\beq\lb{TCTr}
\begin{aligned}
{(}\tilde{\bC}, \tilde{\rho}\xxs{)}(\bxi) ~\colon \!\!\! =~
\!\left\{\begin{array}{l}
\begin{aligned}
&\!\! {(}\bC_{i-1},\rho_{i-1}{)}(\bxi), \!\!\!\! & \!\! \bxi \exs \in  \tilde{\mD}_{i-1}^\star   \!\!\! \\*[0.75mm] 
& \!\! {(}\bC,\rho{)}, \!\!\!\!  & \!\! \bxi \exs \in  \tilde{\mD}_{i}^\star \!\setminus\! \tilde{\mD}_{i-1}^\star  \!\!\! 
\end{aligned}
\end{array}\right.,
\end{aligned}
\eeq
then $\omega>0$ is not a transmission eigenvalue solving

\[
\text{ITP}_\circ^{\xxs\text{\sf f}}(\tilde{\mE}_{i}^\star, \Gamma_{i-1}, \Gamma_i; \lbrace\tilde{\bC},\tilde{\rho}\rbrace, \lbrace\bC_i,\rho_i\rbrace, \bK_{i-1}, \bK_{i})~\colon\!\!\!=~
\]
\beq\lb{ITPE2}
\!\left\{\begin{array}{l} \!\!\!
\begin{aligned}
&\nabla \exs\sip\exs [\exs \bC_i(\bxi) \exs \colon \! \nabla \bw^\star_i\exs](\bxi) \,+\, \rho_i(\bxi) \exs \omega^2 \bw^\star_i(\bxi) ~=~ \bzero, \quad &  \bxi \exs \in \tilde{\mE}_{i}^\star  \!\setminus\! \overline{\Gamma_{i}},   \\*[0.0mm]
&\nabla \exs\sip\exs [\exs \tilde{\bC}(\bxi) \exs \colon \! \nabla \tilde{\bw}^\star_i\exs](\bxi) \,+\, \tilde{\rho}(\bxi) \exs \omega^2 \tilde{\bw}^\star_i(\bxi) ~=~ \bzero, \quad &  \bxi \exs \in \tilde{\mE}_{i}^\star  \!\setminus\! \overline{\Gamma_{i-1}},  \\*[0.2mm]
&\bt[\bw^\star_i](\bxi) - \xxs\bt[\tilde{\bw}^\star_i](\bxi) ~=~ \bzero,  \,\,\, [\bw^\star_i-\tilde{\bw}^\star_i](\bxi) ~=~ \bzero, \quad  & \bxi \in \partial \tilde{\mE}^\star_i  \!\setminus\! \overline{\hat{\Gamma}_i \cup \tilde{\Gamma}_i}, \\*[0.2mm]
&\bt[\bw^\star_i](\bxi) ~=~\bK_{i}(\bxi) \llbracket \bw^\star_i \rrbracket(\bxi),  \,\,\, \llbracket \bt[\bw^\star_i] \rrbracket(\bxi)~=~\bzero,  \,\, & \bxi \in \Gamma_i \cap  \tilde{\mE}^\star_i,  \\*[0.2mm] 
&\bt[\tilde{\bw}^\star_i](\bxi) ~=~\bK_{i-1}(\bxi) \llbracket \tilde{\bw}^\star_i \rrbracket(\bxi),  \,\,\, \llbracket \bt[\tilde{\bw}^\star_i] \rrbracket(\bxi)~=~\bzero,  \,\, & \bxi \in \Gamma_{i-1} \cap  \tilde{\mE}^\star_i,  \\*[0.2mm] 
&\bt[\bw^\star_i](\bxi) - \xxs\bt[\tilde{\bw}^\star_i](\bxi) ~=~ \bzero,  \,\,\, \bt[\bw^\star_i](\bxi)~=~\bK_{i}(\bxi) [\xxs \tilde{\bw}^\star_i\nxs-\bw^\star_i](\bxi),  & \bxi \in \partial\tilde{\mE}^\star_i \cap \hat{\Gamma}_i, \\*[0.2mm] 
&\bt[\bw^\star_i](\bxi) - \xxs\bt[\tilde{\bw}^\star_i](\bxi) ~=~ \bzero,  \,\,\, (\bI \nxs-\nxs \bK_i \bK_{i-1}^{-1})\bt[\bw^\star_i](\bxi)~=~\bK_{i}(\bxi) [\xxs \tilde{\bw}^\star_i\nxs-\bw^\star_i](\bxi),  & \bxi \in \partial\tilde{\mE}^\star_i \cap \tilde{\Gamma}_i,
\end{aligned} 
\end{array}\right.  
\eeq 

Note that special cases such as elastic transformation or growth of intact inclusions are also included in~\eqref{ITPE2} and may be obtained by setting $\Gamma_{i-1} = \emptyset$ and/or $\tilde{\Gamma}_{i} = \hat{\Gamma}_i = \emptyset$ in~\eqref{ITPE2}. Further, in the case of $\Gamma_{i-1} \cap \hat{\mE}_{i}^\star \neq \emptyset$, pertinent to the transformation of microcracked zones, it is further assumed that $\omega>0$ does not satisfy 
$
\text{ITP}_\circ^{\xxs\text{\sf f}}(\hat{\mE}_{i}^\star, \Gamma_{i-1}, \Gamma_i; \lbrace {\bC},{\rho}\rbrace, \lbrace\bC_i,\rho_i\rbrace, \bK_{i-1}, \bK_{i}).
$ 
\end{assum}

\begin{theorem}\label{Lxo} 
Given Assumptions~\ref{EigFT},~\ref{Frac},~\ref{ITPA},~\ref{Tass}, and~\ref{Inc*},
\begin{itemize}
\item{}~Let $\bx_\circ \in {\mD}_{i-1}^\star$ (or $L \subset {\mD}_{i-1}^\star$), then denote by $\bt[\bu^o_{s}]$ (\emph{resp.}~$\bt[\bu^{\sf{f}}_{s}]$) the free-field traction on $\partial \mD_s^o$ (\emph{resp.}~$\Gamma_s$) affiliated with the scattering pattern $\bPsi^{o}\nxs$ (or $\btau^\gamma$), while $(\bu^\star_{s}, \bw^\star_{s})$ uniquely solves $\text{ITP}_{s}$ at times $t_s$, $s \in \lbrace i-1, i \rbrace$. In this setting, 
\beq\lb{statsc}
\begin{aligned}
&\text{If}~~ \bx_\circ \in \check{\mD}_{i-1}^\star \text{\,\,(or\,} L \subset \check{\mD}_{i-1}^\star \text{)} ~~\text{then}~~ \\*[0.2mm]
&\hspace{0.3cm} (\bu^\star_{i} \exs {|}_{\mD^\star_{i-1}  \nxs\setminus  \overline{\Gamma_{i-1}}}, \nabla \bu^\star_{i} {|}_{\mD^\star_{i-1}  \setminus  \overline{\Gamma_{i-1}}}, \,\bt[\bu_{i}^{\emph{\sf{f}}}]{|}_{\Gamma_{i-1}}, \, \bt[\bu_{i}^{o}]{|}_{\partial \mD^o_{i-1}})~=~ \\*[0.2mm]
&\hspace{2cm} (\bu^\star_{i-1} \exs {|}_{{\mD}^\star_{i-1}  \!\setminus  \overline{\Gamma_{i-1}}}, \nabla \bu^\star_{i-1} {|}_{{\mD}^\star_{i-1}  \!\setminus  \overline{\Gamma_{i-1}}}, \, \bt[\bu_{i-1}^{\emph{\sf{f}}}]{|}_{{\Gamma_{i-1}}}, \, \bt[\bu_{i-1}^{o}]{|}_{\partial \mD^o_{i-1}}). \\*[0.2mm]
\end{aligned}
\eeq
\beq\lb{Vevsc}
\begin{aligned}
&\hspace{-0.3cm}\text{If}~~ \bx_\circ \in \tilde{\mD}_{i-1}^\star \text{\,\,(or\,} L \subset \tilde{\mD}_{i-1}^\star \text{)} ~~\text{then}~~ \\*[0.2mm]
&(\bu^\star_{i} \exs {|}_{\mD^\star_{i-1}  \nxs\setminus  \overline{\Gamma_{i-1}}}, \nabla \bu^\star_{i} {|}_{\mD^\star_{i-1}  \setminus  \overline{\Gamma_{i-1}}}, \,\bt[\bu_{i}^{\emph{\sf{f}}}]{|}_{\Gamma_{i-1} \cap \exs \overline{\mD^\star_{i-1}}})~\neq~ \\*[0.2mm]
&\hspace{2.85cm} (\bu^\star_{i-1} \exs {|}_{{\mD}^\star_{i-1}  \!\setminus  \overline{\Gamma_{i-1}}}, \nabla \bu^\star_{i-1} {|}_{{\mD}^\star_{i-1}  \!\setminus  \overline{\Gamma_{i-1}}}, \, \bt[\bu_{i-1}^{\emph{\sf{f}}}]{|}_{{\Gamma_{i-1}} \cap\xxs \overline{{\mD}^\star_{i-1}}}). \\*[0.6mm]
\end{aligned}
\eeq
\item{}~Provided that $\omega$ is not a Neumann eigenvalue of~\eqref{ITPo} per Assumption~\ref{Frac}, then 
\[
\begin{aligned}
&\text{If}~~ \bx_\circ \in \check{\mD}_{i-1}^o \text{\,\,(or\,} L \subset \check{\mD}_{i-1}^o \text{)} ~~\text{then~~\eqref{statsc} applies over ${\mD}_{i-1}^\star \cup {\mD}_{i-1}^o \cup \Gamma_{i-1}$.}  \\*[0.2mm]
&\text{If}~~  \bx_\circ \in \tilde{\mD}_{i-1}^o \text{\,\,(or\,} L \subset \tilde{\mD}_{i-1}^o \text{)} ~~\text{then~~} \\*[0.2mm]
\end{aligned}
\]
\beq\lb{Hevsc}
\bt[\bu_{i}^{o}]{|}_{\partial \mD^o_{i-1}} \neq~\exs  \bt[\bu_{i-1}^{o}]{|}_{\partial \mD^o_{i-1}}. 
\eeq
\item{} Moreover,  
\[
\begin{aligned}
&\text{If}~~ L \subset \check{\Gamma}_{i-1}  ~\text{then~\eqref{statsc} holds.}~~ \\*[0.2mm]
&\text{If}~~ L \subset {\Gamma}_{i-1} \!\setminus\nxs \overline{\check{\Gamma}_{i-1} \cup \check{\mD}_{i-1}^\star \cup \tilde{\mD}_{i}^\star}  ~~\text{then}~~ \\*[0.2mm]
\end{aligned}
\]
\beq\lb{Gevsc}
\bt[\bu_{i}^{\emph{\sf{f}}}]{|}_{{\Gamma}_{i-1}} \neq~\exs  \bt[\bu_{i-1}^{\emph{\sf{f}}}]{|}_{{\Gamma}_{i-1}}.
\eeq
\end{itemize}
\end{theorem}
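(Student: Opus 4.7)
}
The plan is to treat the three invariance statements (the equalities in~\eqref{statsc} and its pore/fracture analogues) by a decoupling-plus-uniqueness argument, and the three variation statements (\eqref{Vevsc}, \eqref{Hevsc}, \eqref{Gevsc}) by contradiction using the transmission eigenvalue and Neumann eigenfrequency hypotheses, respectively. Throughout, I would exploit that the data driving the ITP, namely $(\bn\cdot\bC\colon\!\nabla\bPsi^{o},\bPsi^{o})$ on the relevant boundaries, depend on the baseline model alone and hence are identical at $t_{i-1}$ and $t_i$.

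\medskip\noindent\emph{Invariance on stationary scatterers.}
For $\bx_\circ\in\check{\mD}_{i-1}^\star$ (resp.~$L\subset\check{\mD}_{i-1}^\star$), the containing component $\mD^\star_{i-1,j}$ coincides with some $\mD^\star_{i,\varkappa}$ and is by~\eqref{stsc*} separated from the evolving supports $\mE^\star_i\cup\mE^o_i\cup\hat\Gamma_i\cup\tilde\Gamma_i$. Thanks to this separation, the full system $\text{ITP}_i$ of~\eqref{ITPE} decouples on $\mD^\star_{i-1,j}$ into the very same boundary value problem as $\text{ITP}_{i-1}$ restricted to $\mD^\star_{i-1,j}$, with matching material parameters, matching interior cracks in $\Gamma_{i-1}\cap\mD^\star_{i-1,j}$, and identical inhomogeneous data $(\bn\cdot\bC\colon\!\nabla\bPsi^{o},\bPsi^{o})$ on $\partial\mD^\star_{i-1,j}$. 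Uniqueness from Assumption~\ref{ITPA} forces $(\bu^\star_{i-1},\bw^\star_{i-1})=(\bu^\star_i,\bw^\star_i)$ on this component, whence the equalities of $\bt[\bu^{\text{\sf f}}_s]$ on $\Gamma_{i-1}\cap\overline{\mD^\star_{i-1,j}}$ and $\bt[\bu^{o}_s]$ on $\partial\mD^o_{i-1}$ follow from the contact relations in~\eqref{vk}. The pore and fracture cases are handled identically, invoking Assumption~\ref{Frac} in place of Assumption~\ref{ITPA} to rule out spurious Neumann modes on $\check{\mD}^o_{i-1}$, and invoking Corollary~\ref{GLSM-ITP} for~\eqref{tufs} to identify $\bt[\bu^{\text{\sf f}}_s]$ on stationary pieces of $\Gamma_{i-1}$.

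\medskip\noindent\emph{Variation on evolved scatterers.}
For~\eqref{Vevsc}, assume toward contradiction that the listed quantities coincide on $\tilde{\mD}^\star_{i-1}\setminus\overline{\Gamma_{i-1}}$. Define the pair $(\tilde\bw^\star_i,\bw^\star_i)$ on $\tilde{\mE}^\star_i$ by setting $\tilde\bw^\star_i\colon\!\!= \bw^\star_{i-1}$ on $\tilde{\mD}^\star_{i-1}\subset\tilde{\mE}^\star_i$ and $\tilde\bw^\star_i\colon\!\!= \bu^\star_i$ on $\tilde{\mE}^\star_i\setminus\tilde{\mD}^\star_{i-1}$, which is consistent with~\eqref{TCTr} by construction. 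The assumed coincidence of Cauchy data at $t_{i-1}$ and $t_i$, together with unique continuation across $\partial\tilde{\mE}^\star_i\setminus\overline{\hat\Gamma_i\cup\tilde\Gamma_i}$ and the Kirchhoff identities on $\partial\tilde{\mE}^\star_i\cap(\hat\Gamma_i\cup\tilde\Gamma_i)$, yields that $(\tilde\bw^\star_i,\bw^\star_i)$ is a nontrivial solution of the homogeneous $\text{ITP}_\circ^{\text{\sf f}}(\tilde{\mE}^\star_i,\Gamma_{i-1},\Gamma_i;\{\tilde\bC,\tilde\rho\},\{\bC_i,\rho_i\},\bK_{i-1},\bK_i)$ posed in~\eqref{ITPE2}. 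This contradicts Assumption~\ref{Inc*}. The analogous contradiction for~\eqref{Hevsc} uses Assumption~\ref{Frac}: equality $\bt[\bu^o_i]=\bt[\bu^o_{i-1}]$ on $\partial\mD^o_{i-1}$ would, after the unique continuation argument into $\mathcal{B}^-_{i-1}\cap\mathcal{B}^-_i$, produce a nontrivial Navier field in the connected components of $\mE^o_i\cup\tilde{\mD}^o_{i-1}$ satisfying the homogeneous Neumann system~\eqref{uiH}, excluded by hypothesis. For~\eqref{Gevsc} the same continuation identifies a jump field on $\hat\Gamma_i\cup\tilde\Gamma_i$ that again drives $\text{ITP}_\circ^{\text{\sf f}}$ into its trivial mode, contradicting Assumption~\ref{Inc*} applied with $\tilde{\mE}^\star_i=\emptyset$.

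\medskip\noindent\emph{Main obstacle.}
The delicate step is the reassembly of $(\tilde\bw^\star_i,\bw^\star_i)$ into an admissible pair for~\eqref{ITPE2} when the evolving inclusion $\tilde{\mE}^\star_i$ is threaded by pre-existing cracks of $\Gamma_{i-1}$, new fractures of $\hat\Gamma_i$, and stiffness-modified interfaces of $\tilde\Gamma_i$ simultaneously. One must verify that the Robin-type contact laws at $t_{i-1}$ and $t_i$ combine into the mixed boundary equations on $\partial\tilde{\mE}^\star_i\cap\hat\Gamma_i$ and $\partial\tilde{\mE}^\star_i\cap\tilde\Gamma_i$ prescribed by Assumption~\ref{Inc*}, in particular the factor $(\bI-\bK_i\bK_{i-1}^{-1})$ on $\tilde\Gamma_i$; this forces a case analysis depending on whether $\bK_{i-1}$ is invertible, and otherwise requires a small auxiliary regularization using Assumption~\ref{CP} on $\Im\bK_\kappa$ to preserve the Fredholm structure before passing to the limit. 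Once this bookkeeping is in place, the remaining steps are direct applications of the hypotheses.
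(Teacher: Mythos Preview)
Your overall strategy mirrors the paper's, but there are concrete gaps that would block the argument.

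\textbf{The auxiliary field is mis-defined.} On $\tilde{\mE}^\star_i\setminus\tilde{\mD}^\star_{i-1}$ you set $\tilde{\bw}^\star_i:= \bu^\star_i$. The third line of~\eqref{ITPE2} requires $\bw^\star_i-\tilde{\bw}^\star_i=\bzero$ on $\partial\tilde{\mE}^\star_i\setminus\overline{\hat\Gamma_i\cup\tilde\Gamma_i}$, while $\text{ITP}_i$ gives $\bw^\star_i-\bu^\star_i=\bPsi^o$ there; your pair therefore does \emph{not} solve~\eqref{ITPE2}. The paper's construction~\eqref{SWS} takes $\tilde{\bw}^\star_i=\bu^\star_i+\bPsi^o$ on the outer shell, and defines it on the larger set $\tilde{\mD}^\star_i$ before using unique continuation to reduce to $\tilde{\mE}^\star_i$; without the $+\bPsi^o$ correction the contradiction with Assumption~\ref{Inc*} never materialises, and the final step (forcing $\bu^\star_i=-\bPsi^o$ and clashing with the singularity of $\bPsi^o$) is lost.

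\textbf{The single ITP route does not cover purely interfacial evolution.} You funnel every variation case through $\text{ITP}_\circ^{\text{\sf f}}(\tilde{\mE}^\star_i,\ldots)$, but $\tilde{\mE}^\star_i\subset\overline{\text{supp}(\bC_i-\bC_{i-1})\cup\text{supp}(\rho_i-\rho_{i-1})}$. When $\tilde{\mD}^\star_{i-1}$ evolves only by new or modified cracks (the paper's Cases~1--3), $\tilde{\mE}^\star_i=\emptyset$ and your ITP is vacuous. The paper handles these cases by a different mechanism: it sets $\boldsymbol{\sf w}=\bw^\star_i-\bw^\star_{i-1}$, obtains vanishing Cauchy data on $\partial\tilde{\mD}^\star_{i-1}\setminus\overline{\Gamma_i}$, and draws the contradiction from Assumption~\ref{Frac} (absence of Neumann eigenmodes on the analytic continuation of the crack), not from Assumption~\ref{Inc*}. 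Likewise, for~\eqref{Gevsc} (Case~4) you invoke ``Assumption~\ref{Inc*} applied with $\tilde{\mE}^\star_i=\emptyset$'', which is empty; the paper instead relies on~\eqref{tufs} and the fracture results of~\cite{pour2020}.

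\textbf{The pore contradiction is not a Neumann eigenmode.} For~\eqref{Hevsc} you expect equality of $\bt[\bu^o_s]$ to yield a nontrivial homogeneous Neumann solution in $\mE^o_i\cup\tilde{\mD}^o_{i-1}$. The paper's argument (Cases~11--12) is different: Assumption~\ref{Frac} first gives $\bu^o_i=\bu^o_{i-1}$ in $\mD^o_{i-1}$; then $\bu^o_i=-\bPsi^o$ is identified as the solution of~\eqref{ITPo} in the annulus $\mD^o_i\setminus\overline{\mD^o_{i-1}}$, and unique continuation propagates $\bu^o_i=-\bPsi^o$ into $\mD^o_{i-1}$, contradicting smoothness of $\bu^o_i$ against the singularity (or jump across $L$) of $\bPsi^o$.

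Finally, the obstacle you highlight --- deriving or regularising the factor $(\bI-\bK_i\bK_{i-1}^{-1})$ on $\tilde\Gamma_i$ --- is not where the difficulty lies: that relation is part of the \emph{hypothesis}~\eqref{ITPE2}, and the paper never needs to justify it. The genuine work is the case analysis above, which your plan collapses too aggressively.
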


\begin{proof}
Let $\bx_\circ \in \check{\mD}_{i-1}^\star$ (\emph{resp.}~$L \subset \check{\mD}_{i-1}^\star$), then observe that~(a) $\bPsi^{0}\nxs$ (\emph{resp.}~$\bPsi^{1}\nxs$) of~\eqref{RHS} satisfies $\nabla \exs\sip\exs \bC \exs \colon \! \nabla\bPsi^{o}(\bxi) + \rho \exs \omega^2 \bPsi^{o}(\bxi) = \bzero$ in $\bxi \in \mD_i \!\setminus\! \overline{\check{\mD}_{i-1}^\star}$ with $o = \lbrace 0,1 \rbrace$, and thus, the solutions to $\text{ITP}_{s}$ of \eqref{ITP*} in $\mD^\star_s \!\setminus\! \overline{\check{\mD}_{i-1}^\star}$ is given by $(\bu^\star_{s}, \bw^\star_{s}) \,=\, (-\bPsi^{o}, \bzero)$ for $s = \lbrace i, i-1 \rbrace$,~(b)~$\bt[\bu_{s}^{o}]{|}_{\partial \mD^o_{s}} = - \bt[\bPsi^{o}]{|}_{\partial \mD^o_{s}}$ in light of~(a) and~\eqref{ITPo}, and~(c)~in Corollary~\ref{GLSM-ITP}, $\llbracket \boldsymbol{\sf v}_{s} \rrbracket = \bzero$ on $\Gamma_{s} \nxs\!\setminus\! \overline{\check{\mD}^\star_{i-1}}$ which with reference to the contact law per the second of~\eqref{vk} implies $\bt[\bu_{s}^{\sf{f}}]{|}_{\Gamma_{s} \nxs\setminus \overline{\check{\mD}^\star_{i-1}}} = - \bt[\bPsi^{o}]{|}_{\Gamma_{s} \nxs\setminus \overline{\check{\mD}^\star_{i-1}}}$. Further, note from~\eqref{ITP*} and~\eqref{stsc*} that by definition $\text{ITP}_{i-1} = \exs \text{ITP}_{i}$ in $\check{\mD}_{i-1}^\star$ so that 
\[
\begin{aligned}
&(\bu^\star_{i-1} \exs {|}_{\mD^\star_{i-1}  \!\setminus  \overline{\Gamma_{i-1}}}, \nabla \bu^\star_{i-1} {|}_{\mD^\star_{i-1}  \setminus  \overline{\Gamma_{i-1}}}, \,\bt[\bu_{i-1}^{\emph{\sf{f}}}]{|}_{\Gamma_{i-1}}, \, \bt[\bu_{i-1}^{o}]{|}_{\partial \mD^o_{i-1}})~=~ \\*[0.2mm]
&\hspace{1.6cm}(\bu^\star_{i-1} \exs {|}_{\check{\mD}^\star_{i-1}  \!\setminus  \overline{\Gamma_{i-1}}} \oplus -\bPsi^{o} \exs {|}_{\tilde{\mD}^\star_{i-1}}, \nabla \bu^\star_{i-1} {|}_{\check{\mD}^\star_{i-1}  \setminus  \overline{\Gamma_{i-1}}} \oplus -\nabla \bPsi^{o} \exs {|}_{\tilde{\mD}^\star_{i-1}}, \\*[0.2mm]
&\hspace{3.9cm}\bt[\bu_{i-1}^\star]{|}_{{\Gamma_{i-1}} \cap\xxs \overline{\check{\mD}^\star_{i-1}}} \!\oplus -\bt[\bPsi^{o}]{|}_{{\Gamma_{i-1}} \!\setminus \overline{\check{\mD}^\star_{i-1}}}, \, -\bt[\bPsi^{o}]{|}_{\partial \mD^o_{i-1}}),  \\*[0.2mm]
&(\bu^\star_{i} \exs {|}_{\mD^\star_{i}  \nxs\setminus  \overline{\Gamma_{i}}}, \nabla \bu^\star_{i} {|}_{\mD^\star_{i}  \setminus  \overline{\Gamma_{i}}}, \,\bt[\bu_{i}^{\emph{\sf{f}}}]{|}_{\Gamma_{i}}, \, \bt[\bu_{i}^{o}]{|}_{\partial \mD^o_{i}})~=~ (\bu^\star_{i-1} \exs {|}_{\check{\mD}^\star_{i-1}  \!\setminus  \overline{\Gamma_{i-1}}} \oplus -\bPsi^{o} \exs {|}_{\hat{\mE}^\star_i \cup \tilde{\mD}^\star_{i} \nxs\setminus  \overline{\Gamma_{i}}}, \\*[0.2mm]
&\hspace{0.25cm} \nabla \bu^\star_{i-1} {|}_{\check{\mD}^\star_{i-1}  \!\setminus  \overline{\Gamma_{i-1}}} \oplus -\nabla \bPsi^{o} \exs {|}_{\hat{\mE}^\star_i \cup \tilde{\mD}^\star_{i} \nxs\setminus  \overline{\Gamma_{i}}}, \bt[\bu_{i-1}^\star]{|}_{{\Gamma_{i-1}} \cap\xxs \overline{\check{\mD}^\star_{i-1}}} \nxs \oplus -\bt[\bPsi^{o}]{|}_{{\Gamma_{i}} \nxs\setminus \overline{\check{\mD}^\star_{i-1}}},  -\bt[\bPsi^{o}]{|}_{\partial \mD^o_{i}}), 
\end{aligned}
\] 
establishing~\eqref{statsc}. When $\bx_\circ \in \check{\mD}_{i-1}^o$ or $L \subset \check{\mD}_{i-1}^o$, a similar argument leveraging~\eqref{ITPo} leads to
\[
\begin{aligned}
&(\bu^\star_{s} \exs {|}_{\mD^\star_{s} \!\setminus \overline{\Gamma_{s}}}, \nabla \bu^\star_{s} {|}_{\mD^\star_{s} \!\setminus \overline{\Gamma_{s}}}, \,\bt[\bu_{s}^{\emph{\sf{f}}}]{|}_{\Gamma_{s}}, \, \bt[\bu_{s}^{o}]{|}_{\partial \mD^o_{s}})~=~ ( -\bPsi^{o} \exs {|}_{\mD^\star_{s} \!\setminus \overline{\Gamma_{s}}}, \\*[0.2mm]
&\hspace{0.5cm}  -\nabla \bPsi^{o} \exs {|}_{\mD^\star_{s} \!\setminus \overline{\Gamma_{s}}}, -\bt[\bPsi^{o}]{|}_{\Gamma_{s}},  \bt[\bu_{i-1}^{o}]{|}_{\partial \check{\mD}^o_{i-1}} \!\oplus -\bt[\bPsi^{o}]{|}_{\partial \mD^o_{i} \setminus \overline{\partial \check{\mD}_{i-1}^o}}), \,\,\ s \in \lbrace i-1, i \rbrace, 
\end{aligned}
\]
which confirms~\eqref{statsc}. Same argument as above along with the proof of Theorem 4.5 in~\cite{pour2020} leads to~\eqref{statsc} when $L \subset \check{\Gamma}_{i-1}$.

Let $\bx_\circ \in \tilde{\mD}_{i-1}^\star$ (or~$L \subset \tilde{\mD}_{i-1}^\star$), then in light of the above observe that
\[
\begin{aligned}
&(\bu^\star_{s} \exs {|}_{\mD^\star_{s} \nxs\setminus\xxs \overline{\tilde{\mD}_{s}^\star \cup \xxs \Gamma_{s}}}, \nabla \bu^\star_{s} {|}_{\mD^\star_{s} \nxs\setminus\xxs \overline{\tilde{\mD}_{s}^\star \cup \xxs \Gamma_{s}}}, \,\bt[\bu_{s}^{\emph{\sf{f}}}]{|}_{\Gamma_{s} \nxs\setminus\xxs \overline{\tilde{\mD}_{s}^\star}}, \, \bt[\bu_{s}^{o}]{|}_{\partial \mD^o_{s}})~=~ \\*[0.2mm]
&\hspace{0.5cm} ( -\bPsi^{o} \exs {|}_{\mD^\star_{s} \nxs\setminus\xxs \overline{\tilde{\mD}_{s}^\star \cup \xxs \Gamma_{s}}}, -\nabla \bPsi^{o} \exs {|}_{\mD^\star_{s} \nxs\setminus\xxs \overline{\tilde{\mD}_{s}^\star \cup \xxs \Gamma_{s}}}, -\bt[\bPsi^{o}]{|}_{\Gamma_{s} \nxs\setminus\xxs \overline{\tilde{\mD}_{s}^\star}}, -\bt[\bPsi^{o}]{|}_{\partial \mD^o_{s}}), \,\,\ s \in \lbrace i-1, i \rbrace. 
\end{aligned}
\]

Next, a contradiction argument is adopted to analyze $\bu^\star_{s}$ in $\tilde{\mD}_{s}^\star$, $s \in \lbrace i-1, i \rbrace$, as the following. Suppose that~\eqref{Vevsc} does not hold i.e.,
\beq\lb{CA1}
\begin{aligned}
&(\bu^\star_{i} \exs {|}_{\tilde{\mD}^\star_{i-1}  \nxs\setminus  \overline{\Gamma_{i-1}}}, \nabla \bu^\star_{i} {|}_{\tilde{\mD}^\star_{i-1}  \setminus  \overline{\Gamma_{i-1}}}, \,\bt[\bu_{i}^{\emph{\sf{f}}}]{|}_{\Gamma_{i-1} \cap \exs \overline{\tilde{\mD}^\star_{i-1}}})~=~ \\*[0.2mm]
&\hspace{2.85cm} (\bu^\star_{i-1} \exs {|}_{\tilde{\mD}^\star_{i-1}  \!\setminus  \overline{\Gamma_{i-1}}}, \nabla \bu^\star_{i-1} {|}_{\tilde{\mD}^\star_{i-1}  \!\setminus  \overline{\Gamma_{i-1}}}, \, \bt[\bu_{i-1}^{\emph{\sf{f}}}]{|}_{{\Gamma_{i-1}} \cap\xxs \overline{\tilde{\mD}^\star_{i-1}}}),
\end{aligned}
\eeq
and consider the twelve generic configurations shown in Fig.~\ref{Evolution_sc} for microstructural evolution. The argument for similar or compound scenarios may be drawn from the following case studies. Keep in mind that the interior transmission problems of disconnected sets are independent. Based on this, $\tilde{\mD}_{i-1}^\star$, in each case, should be understood as the simply connected domain $\bx_\circ \in {\mD}_{{i-1},j}^\star \subset \tilde{\mD}_{{i-1}}^\star$ (or ${\mD}_{{i-1},j}^\star \supset L$) as defined earlier.   

\begin{figure}[bp!]
\vspace*{-4mm}
\begin{center} 
\includegraphics[width=0.85\linewidth]{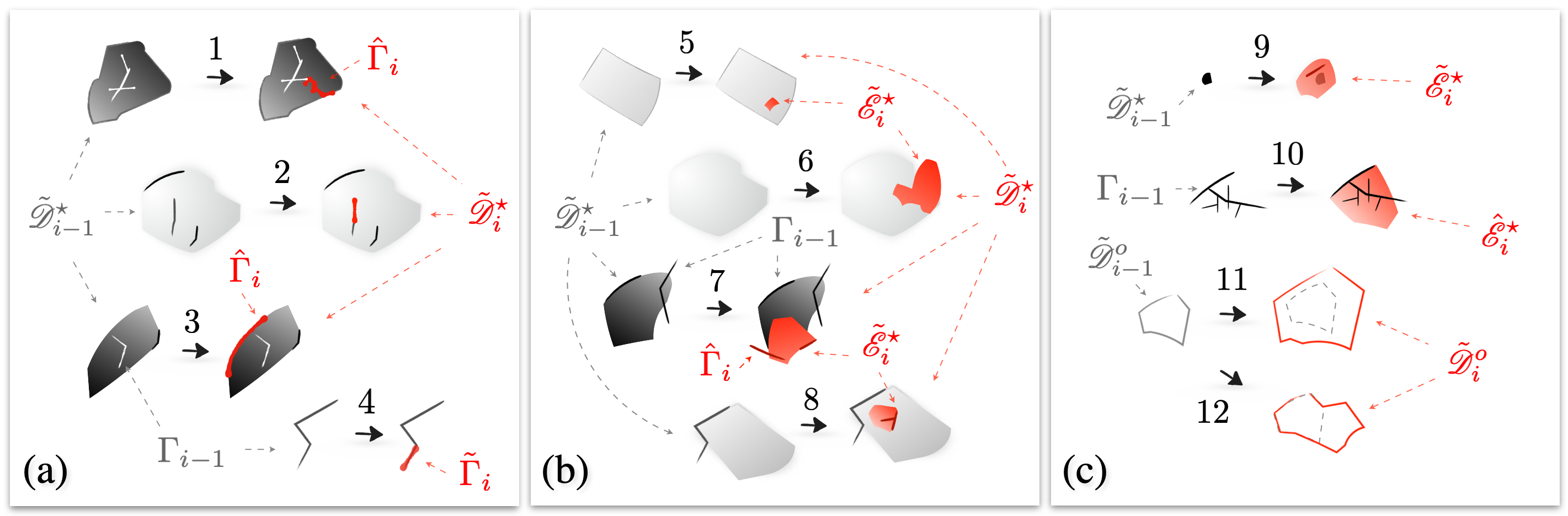}
\end{center} \vspace*{-4mm}
\caption{Twelve scenarios for microstructural transformation:~(a) geometric expansion and/or elastic modification of discontinuity surfaces in inclusions or the binder,~(b) new or modified (fractured) inclusions intersecting with previous (cracked) heterogeneities, and~(c) newborn inclusions masking the microcracked damage zones and expansion of cavities.}
\label{Evolution_sc}\vspace*{-5mm}
\end{figure}

{\emph{\bf{Case 1--3 (fracturing of inclusions)}}}.~With reference to Fig.~\ref{Evolution_sc}~(a), consider the case where $\bx_\circ$ (or $L$) is in $\tilde{\mD}_{i-1}^\star$ where evolution occurs either by new internal/boundary fractures $\hat{\Gamma}_i$ or by elastically modified interfaces $\tilde{\Gamma}_i$. Under the premise of~\eqref{CA1}, let us define $\boldsymbol{\sf w} = \bw^\star_{i}-\bw^\star_{i-1}$ in $\tilde{\mD}_{i-1}^\star \!\!\setminus\! \overline{\Gamma_i}$. On recalling~\eqref{ITPE}, observe that the Cauchy data of $\boldsymbol{\sf w}$ vanishes on $\partial \tilde{\mD}_{i-1}^\star \!\!\setminus\! \overline{\Gamma_i}$ which implies by the unique continuation principle that $\boldsymbol{\sf w} = \bzero$ in $\tilde{\mD}_{i-1}^\star$. In \underline{case 1} -- where $\tilde{\mD}_{i}^\star$ is endowed with internal $\hat{\Gamma}_i$ -- the contradiction arises from the discontinuity of $\bw^\star_{i}$ across $\hat{\Gamma}_i$ while $\bw^\star_{i-1}$ is continuous. The only exception to the latter, according to the fourth of~\eqref{ITPE}, is when $\bt[\bw^\star_{i}]=\bzero$ on $\hat{\Gamma}_i$ so that $\llbracket\bw^\star_{i} \rrbracket = \bzero$, which may not be the case per Assumption~\ref{Frac}. In \underline{case 2} -- where the contact's elasticity $\bK_s$ with $s \in \lbrace i-1, i \rbrace$ changes over $\tilde{\Gamma}_i$ i.e.,~$\bK_{i-1} \neq \bK_i$ -- vanishing $\boldsymbol{\sf w}$ in $\tilde{\mD}_{i-1}^\star$ implies $(\bK_{i}-\bK_{i-1})\llbracket\bw^\star_{i} \rrbracket = (\bK_{i}-\bK_{i-1})\llbracket\bw^\star_{i-1} \rrbracket =\bzero$ on $\tilde{\Gamma}_i$, by the fourth of~\eqref{ITPE}, which requires $\bt[\bw^\star_{i}] = \bt[\bw^\star_{i-1}]=\bzero$ over $\tilde{\Gamma}_i$ that contradicts Assumption~\ref{Frac}. In \underline{case 3} -- where $\hat{\Gamma}_i \subset \partial \tilde{\mD}_{i}^\star$ -- the contradiction may be observed from the fifth of~\eqref{ITPE} where $\boldsymbol{\sf w} = \bzero$ reads $\bt[\bw^\star_{i}] =\bzero$ on $\hat{\Gamma}_i$ with similar contradiction to Assumption~\ref{Frac}.     

{\emph{\bf{Case 4 (evolution of elastic contacts)}}}.~In this case where $L \subset \tilde{\Gamma}_i \!\setminus\! \overline{{\mD}_{i}^\star} \subset {\Gamma}_{i-1} \!\setminus\nxs \check{\Gamma}_{i-1}$, the fracture stiffness evolves within the matrix, as shown in Fig.~\ref{Evolution_sc}~(a), such that $\bK_{i}\neq\bK_{i-1}$ on $\tilde{\Gamma}_i \!\setminus\! \overline{{\mD}_{i}^\star}$. Then,~\eqref{Gevsc} is directly concluded from Theorem 4.5 and Theorem 4.7 of~\cite{pour2020}. This may also be observed from~\eqref{tufs}.

{\emph{\bf{Case 5 and 6 (volumetric growth or transformation of intact inclusions)}}}.~The premise, as depicted in~Fig.~\ref{Evolution_sc}~(b), is that $\overline{\tilde{\mD}_{i-1}^\star} \cap \Gamma_i = \overline{\tilde{\mD}_{i}^\star} \cap \Gamma_i = \emptyset$. In this case, the contradiction to~\eqref{CA1} may be argued similar to the proof of Theorem 4.2 in~\cite{AudiDLSM} which establishes~\eqref{Vevsc}. 

{\emph{\bf{Case 7--9 (elastic transformation or expansion of fractured inclusions)}}}.~Let us define $\tilde{\bw}^\star_{i}$ in $\tilde{\mD}_{i}^\star$ as the following: 
\beq\label{SWS}
\tilde{\bw}^\star_{i}(\bxi) ~\colon \!\!\! =~
\!\left\{\begin{array}{l}
\begin{aligned}
&\!\!  {\bw}^\star_{i-1}(\bxi),  & \bxi \exs \in  \tilde{\mD}_{i-1}^\star  \!\setminus\nxs  \overline{\Gamma_{i-1}}  \!\!\! \\*[0.0mm] 
& \!\! [{\bu}^\star_{i}  + \bPsi^{o}](\bxi),  & \bxi \exs \in  \tilde{\mD}_{i}^\star \!\setminus\! \overline{\tilde{\mD}_{i-1}^\star}  \!\!\! 
\end{aligned}
\end{array}\right.,
\eeq

Observe in light of~\eqref{ITP*} and~\eqref{CA1} that~$\tilde{\bw}^\star_{i}$ solves  
\[
\begin{aligned}
&\nabla \exs\sip\exs \tilde{\bC} \exs \colon \! \nabla\tilde{\bw}^\star_{i} \exs+\exs \tilde{\rho} \exs \omega^2 \tilde{\bw}^\star_{i} \,=\, \bzero \quad \text{in}\,\, \tilde{\mD}^\star_{i} \!\setminus\! \overline{\Gamma_{i-1}},
\end{aligned}
\]
wherein $(\tilde{\bC}, \tilde{\rho})$ is given by~\eqref{TCTr}. In this setting, one may show that the Cauchy data affiliated with $\tilde{\boldsymbol{\sf w}} \xxs=\xxs \tilde{\bw}^\star_{i}\nxs-\bw^\star_{i}$ vanish on $\partial \tilde{\mD}_{i}^\star \!\setminus\! \Gamma_i$ so that $(\bw^\star_{i},\tilde{\bw}^\star_{i})$ is the solution to  
$
\text{ITP}_\circ^{\xxs\text{\sf f}}(\tilde{\mD}_{i}^\star, \Gamma_{i-1}, \Gamma_i; \lbrace\tilde{\bC},\tilde{\rho}\rbrace, \lbrace\bC_i,\rho_i\rbrace, \bK_{i-1}, \bK_{i}).
$
To continue, let us consider two configurations: ({cases 7, 8}) where $\tilde{\mD}_{i-1}^\star$ is not included in any simply connected part $\tilde{\mE}_{i,j}^\star$ of $\tilde{\mE}_{i}^\star$, and ({case 9}) where $\tilde{\mD}_{i-1}^\star \subset \tilde{\mE}_{i,j}^\star$. Note that in \underline{cases 7, 8}, $\partial \tilde{\mD}_{i-1}^\star \cap \partial \tilde{\mD}_{i}^\star$ is of nonzero surface measure, then owing to the equality of Cauchy data associated with $\tilde{\bw}^\star_{i}$ and $\bw^\star_{i}$ and the fact that $(\bC_{i},\rho_{i}) = (\tilde{\bC},\tilde{\rho})=(\bC_{i-1},\rho_{i-1})$ on $\partial \tilde{\mD}_{i-1}^\star \cap \partial \tilde{\mD}_{i}^\star$ one may conclude that $\tilde{\bw}^\star_{i} = \bw^\star_{i}$ on $\tilde{\mD}_{i-1}^\star \!\setminus\!\tilde{\mE}_{i}^\star$. Consequently, $(\bw^\star_{i},\tilde{\bw}^\star_{i})|_{\tilde{\mE}_{i}^\star}$ is the solution to
$
\text{ITP}_\circ^{\xxs\text{\sf f}}(\tilde{\mE}_{i}^\star, \Gamma_{i-1}, \Gamma_i; \lbrace\tilde{\bC},\tilde{\rho}\rbrace, \lbrace\bC_i,\rho_i\rbrace, \bK_{i-1}, \bK_{i}).
$
The latter according to~Assumption~\ref{Inc*} implies that $\bw^\star_{i}=\tilde{\bw}^\star_{i}=\bzero$ in $\tilde{\mE}_{i}^\star$ which by unique continuation reads $\bw^\star_{i} = \bzero$ in $\tilde{\mD}_{i}^\star$. This requires $\bu^\star_{i} = -\bPsi^{o}$ in $\tilde{\mD}_{i-1}^\star$ which is a contradiction since $\bu^\star_{i}$ is smooth by definition while $\bPsi^{o}$ features a singularity at $\bx_\circ$. In \underline{case 9}, one may directly deduce from $\tilde{\mE}_{i}^\star \supset \tilde{\mE}_{i,j}^\star = \tilde{\mD}_{i}^\star$ and~Assumption~\ref{Inc*} that $\bw^\star_{i} = \bzero$ in $\tilde{\mD}_{i}^\star$ which leads to the same contradiction. 

{\emph{\bf{Case 10 (elastic transformation of microcracked damage zones)}}}.~With reference to~Fig.~\ref{Evolution_sc}~(c), consider the case where $L$ coincides with the binder's fractures at $t_{i-1}$ ($L \subset \Gamma_{i-1}$) within a neighborhood that undergoes elastic evolution at $t_i$ such that $L \subset \Gamma_{i-1} \cap \hat{\mE}_{i}^\star$. Contrary to~\eqref{Gevsc}, let
\[
\bt[\bu^\star_{i}]{|}_{{\Gamma}_{i-1} \cap \hat{\mE}_{i}^\star} =~\exs  \bt[\bu_{i-1}^{\emph{\sf{f}}}]{|}_{{\Gamma}_{i-1} \cap \hat{\mE}_{i}^\star}, 
\]  
wherein both free fields $\bu_{i-1}^{\emph{\sf{f}}}$ and $\bu^\star_{i}$ satisfy $\nabla \exs\sip\exs \bC \exs \colon \! \nabla(\exs\cdot\exs) + \rho \exs \omega^2 (\exs\cdot\exs) = \bzero$ in ${\hat{\mE}_{i}^\star}$. Then observe that the latter also governs ${\boldsymbol{\sf u}}^i = \bu^\star_{i} - \bu_{i-1}^{\emph{\sf{f}}}$ such that $\bt[{\boldsymbol{\sf u}}^i] = \bzero$ on ${\Gamma}_{i-1} \cap \hat{\mE}_{i}^\star$, implying per Assumption~\ref{Frac} at $t_{i-1}$ that $\bu^\star_{i} =\bu_{i-1}^{\emph{\sf{f}}}$ in ${\hat{\mE}_{i}^\star}$. Based on which, one may define $\hat{\bw}^\star_{i} = {\bu}^\star_{i}  + \bPsi^{1}$ in ${\hat{\mE}_{i}^\star}$ and similar to Cases 7--9 conclude that $(\bw^\star_{i},\hat{\bw}^\star_{i})$ is a solution to 
$
\text{ITP}_\circ^{\xxs\text{\sf f}}(\hat{\mE}_{i}^\star, \Gamma_{i-1}, \Gamma_i; \lbrace{\bC},{\rho}\rbrace, \lbrace\bC_i,\rho_i\rbrace, \bK_{i-1}, \bK_{i}),
$ 
which by Assumption~\ref{Inc*} reads $\hat{\bw}^\star_{i} = \bzero$ requiring that $\bu^\star_{i} = -\bPsi^{1}$ in ${\hat{\mE}_{i}^\star}$ which is a contradiction since $\bu^\star_{i}$ is smooth by definition while $\bPsi^{1}$ has a discontinuity across $L$.
  
{\emph{\bf{Case 11--12 (expansion of pores)}}}.~Consider the case shown in Fig.~\ref{Evolution_sc}~(c) where $\bx_\circ$ (or $L$) is in $\tilde{\mD}_{i-1}^{o}$ where the evolution of cavities occurs. In contrast to~\eqref{Hevsc}, let 

\[
\bt[\bu_{i}^{o}]{|}_{\partial \mD^o_{i-1}} =~\exs  \bt[\bu_{i-1}^{o}]{|}_{\partial \mD^o_{i-1}}, 
\] 
where $\bu_{i-1}^{o}$, $\bu^{o}_{i}$, and thus ${\boldsymbol{\sf u}}^o = \bu^o_{i} - \bu_{i-1}^0$ satisfy $\nabla \exs\sip\exs \bC \exs \colon \! \nabla(\exs\cdot\exs) + \rho \exs \omega^2 (\exs\cdot\exs) = \bzero$ in $\mD^o_{i-1}$. Note that $\bt[{\boldsymbol{\sf u}}^o] = \bzero$ on $\partial \mD^o_{i-1}$ implying per Assumption~\ref{Frac} that $\bu^o_{i} =\bu_{i-1}^o$ in $\mD^o_{i-1}$. Keep in mind that $\bu^o_{i-1}$ (\emph{resp}.~$\bu^o_{i}$) solves~\eqref{ITPo} in $\mD^o_{i-1}$ (\emph{resp}.~$\mD^o_{i}$). Then, observe that $\bu^o_{i} = -\bPsi^{o}$ solves~\eqref{ITPo} within $\mD^o_{i} \!\setminus\! \overline{\mD^o_{i-1}}$ provided that $\bu^o_{i} =\bu_{i-1}^o$ in $\mD^o_{i-1}$. In this setting, the continuity of $\bu^o_{i}$ across $\partial \bu_{i-1}^o \!\setminus\! \partial \bu_{i}^o$ along with the unique continuation principle requite that $\bu^o_{i} = -\bPsi^{o}$ in $\mD^o_{i-1}$ which is a contradiction since $\bu^o_{i}$ is smooth according to~\eqref{ITPo} while $\bPsi^{o}$ has either a singularity at $\bx_\circ$, or a discontinuity across $L$ per~\eqref{RHS}.  
\end{proof}

Theorem~\ref{Lxo} furnishes the main results required to (a) identify the invariants of scattering solutions according to Theorem~\ref{Inv1} which is directly obtained by drawing from~\cite[Theorem 4.3]{AudiDLSM} and \cite[Theorem 4.7]{pour2020}, and (b) establish the validity of differential evolution indicators in~\eqref{EIF} following~\cite[Corollary 1]{AudiDLSM}.

\begin{theorem}\lb{INV1}
Define 
\beq \lb{Inv1}
\begin{aligned}
\chi_{i}(\mathcal{G}_{i-1}, \mathcal{G}_{i}) &:= \big(\, \mathcal{G}_{i}\!-\mathcal{G}_{i-1},\, \Lambda_{{i-1}_\sharp}(\mathcal{G}_{i}\!-\mathcal{G}_{i-1})\big), \qquad \mathcal{G}_{i-1}, \mathcal{G}_{i}\in L^2(\OOd)^3,
\end{aligned}
\eeq
where $(\mathcal{G}_{i-1}, \mathcal{G}_{i})(\bPsi^{o};\gamma)$ are the constructed minimizers of $(\mathfrak{J}^{\gamma}_{i-1}, \mathfrak{J}^{\gamma}_{i})$ in~\eqref{GCf} according to~\eqref{mseq1}. Then, in light of factorization~\eqref{fact} and~\eqref{oH}, Theorem~\ref{Lxo} reads 
\[
\hspace*{-2cm} \bullet\,\,\,\,\text{If}\,\,\, \begin{array}{|ll}
\begin{aligned}
&\!\nxs  L\subset\check{\mD}_{i-1}^\star \cup \check{\mD}_{i-1}^o \cup \check{\Gamma}_{i-1}  \!\!\! \\*[0.0mm] 
& \!\nxs \bx_\circ\in\check{\mD}_{i-1}^\star \cup \check{\mD}_{i-1}^o  \!\!\! 
\end{aligned}
\end{array}.\!\!, \, \text{then} \lim\limits_{\gamma \rightarrow 0}\chi_{i}[\mathcal{G}_{i-1}, \mathcal{G}_{i}](\bPsi^{o};\gamma) = 0.
\]
\[
\hspace*{-0.45cm}  \bullet\,\,\,\, \text{If}\,\,\, \begin{array}{|ll}
\begin{aligned}
&\!\nxs  L\subset  \tilde{\mD}_{i-1}^\star \cup \tilde{\mD}_{i-1}^o \cup {\Gamma}_{i-1} \!\setminus\! \check{\Gamma}_{i-1}  \!\!\! \\*[0.0mm] 
& \!\nxs \bx_\circ\in\tilde{\mD}_{i-1}^\star \cup \tilde{\mD}_{i-1}^o  \!\!\! 
\end{aligned}
\end{array}.\!\!, \, \text{then} \,\, 0 < \lim\limits_{\gamma \rightarrow 0}\chi_{i}[\mathcal{G}_{i-1}, \mathcal{G}_{i}](\bPsi^{o};\gamma) < \infty.
\]
\[
 \bullet\,\,\,\, \text{If}\,\,\, \begin{array}{|ll}
\begin{aligned}
&\!\nxs  L\subset\mE^\star_i \!\setminus\! \overline{\tilde{\mD}_{i-1}^\star} \cup \mE^o_i  \!\setminus\!  \overline{\tilde{\mD}_{i-1}^o} \cup \hat{\Gamma}_{i} \!\setminus\! \overline{\tilde{\mD}_{i-1}^\star}  \!\!\! \\*[0.0mm] 
& \!\nxs \bx_\circ\in \mE^\star_i \!\setminus\! \overline{\tilde{\mD}_{i-1}^\star} \cup \mE^o_i  \!\setminus\!  \overline{\tilde{\mD}_{i-1}^o}  \!\!\! 
\end{aligned}
\end{array}.\!\!, \, \text{then} \, \lim\limits_{\gamma \rightarrow 0}\chi_{i}[\mathcal{G}_{i-1}, \mathcal{G}_{i}](\bPsi^{o};\gamma) = \infty.
\]
\end{theorem}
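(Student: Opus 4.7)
The plan is to reduce $\chi_i$ to a squared seminorm of $\mathcal{S}_{i-1}(\mathcal{G}_i - \mathcal{G}_{i-1})$ in the sampling space and then read off the three behaviors from the strong convergence supplied by Theorem~\ref{GLSM1} combined with the invariance/evolution dichotomy of Theorem~\ref{Lxo}. The entry point is the factorization $\Lambda_{i-1_\sharp} = \mathcal{S}_{i-1}^{*}\,T_{i-1_\sharp}\,\mathcal{S}_{i-1}$ from~\eqref{facts2}; the coercivity~\eqref{co-T} together with the self-adjoint positivity of $T_{i-1_\sharp}$, which yields the upper bound $(T_{i-1_\sharp}\boldsymbol{\Xi},\boldsymbol{\Xi}) \leqslant \|T_{i-1_\sharp}\|\,\|\boldsymbol{\Xi}\|^{2}$, produces constants $c, C > 0$ independent of $\gamma$ with
\[
c\,\|\mathcal{S}_{i-1}(\mathcal{G}_i - \mathcal{G}_{i-1})\|_{\mathfrak{S}(H_{\Delta})}^{2} \,\leqslant\, \chi_i(\mathcal{G}_{i-1},\mathcal{G}_i) \,\leqslant\, C\,\|\mathcal{S}_{i-1}(\mathcal{G}_i - \mathcal{G}_{i-1})\|_{\mathfrak{S}(H_{\Delta})}^{2}.
\]

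For the first two bullets the sampling support lies in $\mD_{i-1}\cup\Gamma_{i-1}$, so Corollary~\ref{GLSM-ITP} gives $\bPsi^{o}\in\mathrm{Range}(\mathcal{V}_{i-1})$; scatterer monotonicity ($\mD_{i-1}\subset\mD_i$ and $\Gamma_{i-1}\setminus\overline{\mD^o_i}\subset\Gamma_i$) likewise places $\bPsi^{o}\in\mathrm{Range}(\mathcal{V}_i)$. Theorem~\ref{GLSM1} then delivers strong convergences $\mathcal{S}_s\mathcal{G}_s \to \boldsymbol{\Xi}_s^{\star}$ in $\mathfrak{S}(H_{\Delta})$ as $\gamma\to 0$, where $\boldsymbol{\Xi}_s^{\star}$ is assembled from the unique solutions to \eqref{ITP*} and~\eqref{ITPo} at $t_s$, $s\in\{i-1,i\}$. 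Since the free field $\bu^{\textrm{f}}$ associated with $\mathcal{G}_i$ is a globally defined $H^1(\mathcal{B})^{3}$ solution of~\eqref{uf}, the trace $\mathcal{S}_{i-1}\mathcal{G}_i$ is simply the restriction of $\mathcal{S}_i\mathcal{G}_i$ to the earlier geometry, with the pieces on $\partial\mD^o_{i-1}\cap\mD^o_i$ and $\Gamma_{i-1}\cap\overline{\mD^o_i}$ recovered by elliptic continuation underwritten by Assumption~\ref{Frac}. In the stationary regime the identity~\eqref{statsc} equates these two limits on the $t_{i-1}$-geometry, so $\mathcal{S}_{i-1}(\mathcal{G}_i - \mathcal{G}_{i-1})\to\bzero$ and the upper bound yields $\chi_i\to 0$. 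In the evolved regime \eqref{Vevsc},~\eqref{Hevsc}, or \eqref{Gevsc} force the two limits to disagree on the evolved subset, so the convergence is to a nonzero element of $\mathfrak{S}(H_{\Delta})$, and the coercive lower bound produces a strictly positive finite limit for $\chi_i$.

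For the third bullet the sampling support lies outside $\mD_{i-1}\cup\Gamma_{i-1}$, so Corollary~\ref{GLSM-ITP} gives $\bPsi^{o}\notin\mathrm{Range}(\mathcal{V}_{i-1})$, and the second implication of Theorem~\ref{GLSM1} forces $(\mathcal{G}_{i-1},\Lambda_{i-1_\sharp}\mathcal{G}_{i-1})\to\infty$. Meanwhile the newly active scatterer contains the support, so $\bPsi^{o}\in\mathrm{Range}(\mathcal{V}_i)$ and the first implication of Theorem~\ref{GLSM1}, together with coercivity of $T_{i_\sharp}$, keeps $\mathcal{S}_i\mathcal{G}_i$ bounded in $\mathfrak{S}(H_{\Delta})$. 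Transferring this bound across the containments $\mD_{i-1}\subset\mD_i$ and $\Gamma_{i-1}\setminus\overline{\mD^o_i}\subset\Gamma_i$, and propagating the regularity of $\bu^{\textrm{f}}$ in the remaining regions through the non-resonance condition of Assumption~\ref{Frac}, keeps $\mathcal{S}_{i-1}\mathcal{G}_i$ bounded as well, whence $(\mathcal{G}_i,\Lambda_{i-1_\sharp}\mathcal{G}_i)$ stays bounded. The reverse triangle inequality for the seminorm induced by the positive self-adjoint operator $\Lambda_{i-1_\sharp}$ then gives $\sqrt{\chi_i}\,\geqslant\,\sqrt{(\mathcal{G}_{i-1},\Lambda_{i-1_\sharp}\mathcal{G}_{i-1})}\,-\,\sqrt{(\mathcal{G}_i,\Lambda_{i-1_\sharp}\mathcal{G}_i)}\,\to\,\infty$.

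The hard part is the transfer step just invoked in Case 3: controlling $\mathcal{S}_{i-1}\mathcal{G}_i$ when only $\mathcal{S}_i\mathcal{G}_i$ is \emph{a priori} bounded and $\|\mathcal{G}_i\|_{L^{2}(\partial\mathcal{B}_t)^{3}}$ may itself blow up as $\gamma\to 0$. The delicate components are traces of $\bu^{\textrm{f}}$ on portions of $\partial\mD^o_{i-1}$ lying strictly inside $\mD^o_i$ and on $\Gamma_{i-1}\cap\overline{\mD^o_i}$; handling them requires transporting the available bounds on $\mD_i^{\star}$ and on $\partial\mD^o_i$ into local $H^{1}$-bounds for $\bu^{\textrm{f}}$ in the missing regions via well-posed auxiliary elliptic problems, with Assumption~\ref{Frac} ruling out the resonances that would obstruct this propagation.
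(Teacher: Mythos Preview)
Your proposal is correct and follows the same route the paper invokes by citation: the paper gives no self-contained proof of Theorem~\ref{INV1}, stating only that it is ``directly obtained by drawing from \cite[Theorem 4.3]{AudiDLSM} and \cite[Theorem 4.7]{pour2020}'' once Theorem~\ref{Lxo} is in hand, and your reduction of $\chi_i$ via the factorization~\eqref{facts2} and the two-sided bound from coercivity~\eqref{co-T} and boundedness of $T_{i-1_\sharp}$ to $\|\mathcal{S}_{i-1}(\mathcal{G}_i-\mathcal{G}_{i-1})\|^2$, followed by the strong convergence of Theorem~\ref{GLSM1} and the dichotomy of Theorem~\ref{Lxo}, is precisely the argument those references carry. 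Your explicit isolation of the transfer step---bounding $\mathcal{S}_{i-1}\mathcal{G}_i$ from $\mathcal{S}_i\mathcal{G}_i$ through the Neumann well-posedness of Assumption~\ref{Frac} on the components of $\partial\mD^o_{i-1}$ absorbed into $\mD^o_i$---actually goes beyond what the paper records and is the correct way to close that gap.
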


\paragraph*{Differential evolution indicators.}
Let us introduce the imaging functionals $\mathfrak{D}_i: L^2(\Omega^3)\times L^2(\Omega^3) \rightarrow \mathbb{R}$ and $\tilde{\mathfrak{D}}_i: L^2(\Omega^3)\times L^2(\Omega^3) \rightarrow \mathbb{R}$ such that given $\Upsilon_{i}(\mathcal{G}_{i}) :=  (\mathcal{G}_{i}, \Lambda_{{i}_\sharp} \mathcal{G}_{i})$, 
\beq\lb{EIF}
\begin{aligned}
& \mathfrak{D}_i(\mathcal{G}_{i-1}, \mathcal{G}_{i}) := \frac{1}{\sqrt{\Upsilon_{i}(\mathcal{G}_{i}) \big{[}1+ \Upsilon_{i}(\mathcal{G}_{i}) \chi_{i}^{-1}(\mathcal{G}_{i-1}, \mathcal{G}_{i})\big{]}}},  \\*[0.25mm]
&\tilde{\mathfrak{D}}_i(\mathcal{G}_{i-1}, \mathcal{G}_{i}) := \frac{1}{\sqrt{\Upsilon_{i-1}(\mathcal{G}_{i-1}) + \Upsilon_{i}(\mathcal{G}_{i}) \big{[}1+ \Upsilon_{i-1}(\mathcal{G}_{i-1})\chi_{i}^{-1}(\mathcal{G}_{i-1}, \mathcal{G}_{i})\big{]}}}.
\end{aligned}
\eeq
Then, it follows that
\[
\bullet\,\,\,\,\,\, \begin{array}{|ll}
\begin{aligned}
&\!\nxs  L \subset \mE^\star_i \cup \tilde{\mD}_{i-1}^\star \cup \mE^o_i \cup \tilde{\mD}_{i-1}^o \cup \hat\Gamma_i \cup {\Gamma}_{i-1} \!\!\setminus\! \check{\Gamma}_{i-1}  \!\!\! \\*[0.0mm] 
& \!\nxs \bx_\circ\in \mE^\star_i \cup \tilde{\mD}_{i-1}^\star \cup \mE^o_i \cup \tilde{\mD}_{i-1}^o  \!\!\! 
\end{aligned}
\end{array}\!\! \,\,\, \iff \,\, \lim\limits_{\gamma \rightarrow 0} \mathfrak{D}_i(\mathcal{G}_{i-1}, \mathcal{G}_{i})(\bPsi^{o};\gamma) > 0.
\]
\[
\hspace*{-2.2cm}\bullet\,\,\,\,\,\, \begin{array}{|ll}
\begin{aligned}
&\!\nxs  L\subset  \tilde{\mD}_{i-1}^\star \cup \tilde{\mD}_{i-1}^o \cup {\Gamma}_{i-1} \!\!\setminus\! \check{\Gamma}_{i-1}  \!\!\! \\*[0.0mm] 
& \!\nxs \bx_\circ\in\tilde{\mD}_{i-1}^\star \cup \tilde{\mD}_{i-1}^o  \!\!\! 
\end{aligned}
\end{array}\!\! \,\,\, \iff \,\, \lim\limits_{\gamma \rightarrow 0} \tilde{\mathfrak{D}}_i(\mathcal{G}_{i-1}, \mathcal{G}_{i})(\bPsi^{o};\gamma) > 0.
\]

In other words, ${\mathfrak{D}}_i$ (\emph{resp.}~$\tilde{\mathfrak{D}}_i$) assumes near-zero values except at the loci of $\tilde{\mD}_{i-1}^\star \cup \tilde{\mD}_{i-1}^o \cup {\Gamma}_{i-1} \!\!\setminus\! \check{\Gamma}_{i-1}$ (\emph{resp.}~$\tilde{\mD}_{i-1}^\star \cup \tilde{\mD}_{i-1}^o \cup {\Gamma}_{i-1} \!\!\setminus\! \check{\Gamma}_{i-1}$) where the indicator increases and remains finite as $\gamma \to 0$. By building on~\cite[Theorem 4.4]{AudiDLSM} and~\cite[Theorem 4.8]{pour2020}, it is quite straightforward to formulate pertinent results for noisy data which for brevity are not included in this paper. To summarize, Fig.~\ref{flowch} provides the steps for the construction of evolution indicators from numerical or laboratory test data.

\begin{figure}[tp!]
\begin{center} 
\includegraphics[width=1\linewidth]{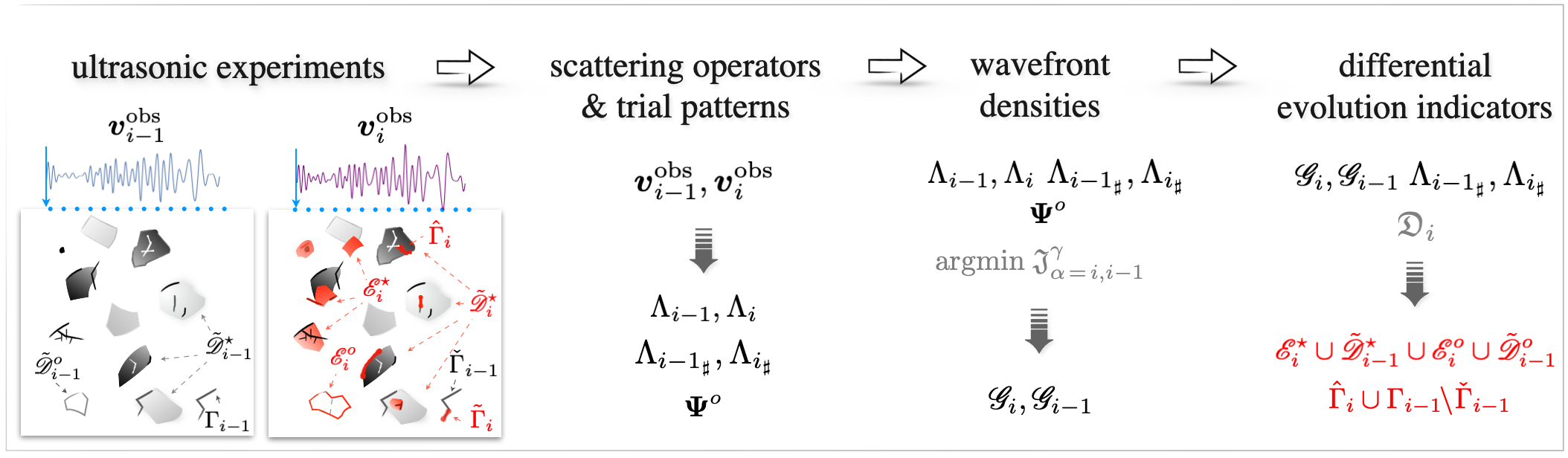}
\end{center} \vspace*{-6mm}
\caption{Differential imaging of evolution from sequential waveform data.}
\label{flowch}
\end{figure}

\section{Synthetic experiments}\label{IR}

The evolution indicators of~\eqref{EIF} are put to test in this section by a set of numerical experiments. The primary focus is on a randomly heterogeneous and discontinuous background with evolving microstructure due to elastic transformation and/or fracturing. The special cases of \emph{monolithic} solids endowed with crack or pore networks are reported in~\cite{pour2020}. In this section, the synthetic scattered fields $\bv\obs_i$, $i = \lbrace \circ,1,2,3 \rbrace$, are simulated via the boundary element method~\cite{Bon1999}, see~\cite{Fate2017} for more on the computational platform.             

With reference to Fig.~\ref{config}, the testing configuration at $t_\circ$ i.e.,~the background domain entails a composite slab of dimensions $2.5$ $\!\times\!$ $2.5$ $\!\times\!$ $0.01$ comprised of an elastic binder endowed with ellipsoidal inclusions of arbitrary distribution and size. The in-plane diameters of scatterers ranges from one to five shear wavelengths $\lambda_s = 0.04$, while their pairwise distances are greater than $2\lambda_s$. The normalized shear modulus, mass density, and Poisson's ratio of the matrix are taken as $\mu_m = 1$, $\rho_m = 1$ and $\nu_m = 0.25$, while that of the scatterers are~$\mu_s = 2$, $\rho_s = \rho_m$ and $\nu_s = \nu_m$. In this setting, the shear and compressional wave speeds in the matrix are $c^s_m = 1$ and $c^p_m = 1.73$. The specimen's microstructural evolution in the following time steps $t_1-t_3$, according to Fig.~\ref{config}, involves (a) multi-step fracturing of the binder and inclusions and their coalescence, (b) elastic transformation of pre-existing inclusions at $t_\circ$, and (c) emergence of new volumetric heterogeneities with shear modulus $\mu^{\text{new}}_s = 1.5$. Note the gradual increase in the evolution complexity, and in particular, the density of scatterers such that at $t_3$: (a) the pairwise distance between scatterers may reduce to a small fraction of $\lambda_s$, and (b) a subset of evolution support is deeply embedded within the stationary scatterers.   

Synthetic experiments are conducted at four time steps $t_\circ-t_3$ when the specimen assumes the geometric configurations shown in Fig.~\ref{config}. Every sensing step entails 2000 forward simulations where in-plane harmonic waves of frequency $\omega = 140$ rad/s are generated at a point source over the specimen's external boundary. The resulting scattered fields $\bv\obs_i$, $i = \lbrace \circ,1,2,3 \rbrace$, are then calculated on the same grid by solving the 3D elastodynamics boundary integral equations. Given that $\lambda_s$ is four times greater than the specimen thickness, the leading contributions to scattered fields are the in-plane components which are then used for data inversion.

The obtained scattered signatures are used to compute the synthetic wavefront densities $\mathcal{G}_i$, $i = \lbrace \circ,1,2,3 \rbrace$, as approximate minimizers of the cost functionals in~\eqref{GCf}. The latter follows the common three steps required for constructing any sampling-based indicator, namely:~(1) forming the discrete scattering operators $\Lambda_{{i}_\sharp}$ at every $t_i$, (2) assembling the trial signatures $\bPsi^o$ of~\eqref{RHS} as the right-hand side of the scattering equation, and (3) solving the latter by minimizing the discretized cost function~\eqref{GCf} by invoking the Morozov discrepancy principle. A detailed account of this process is provided in~\cite{pour2020}. Given $\Lambda_{{i}_\sharp}$ and $\mathcal{G}_i$, one may then evaluate the imaging functionals $\mathfrak{D}_i$ from~\eqref{EIF} which is expected to achieve its highest values at the loci of new and evolved scatterers in each sensing sequence.   

\begin{figure}[tp!]
\vspace*{-5mm}
\begin{center} 
\includegraphics[width=0.95\linewidth]{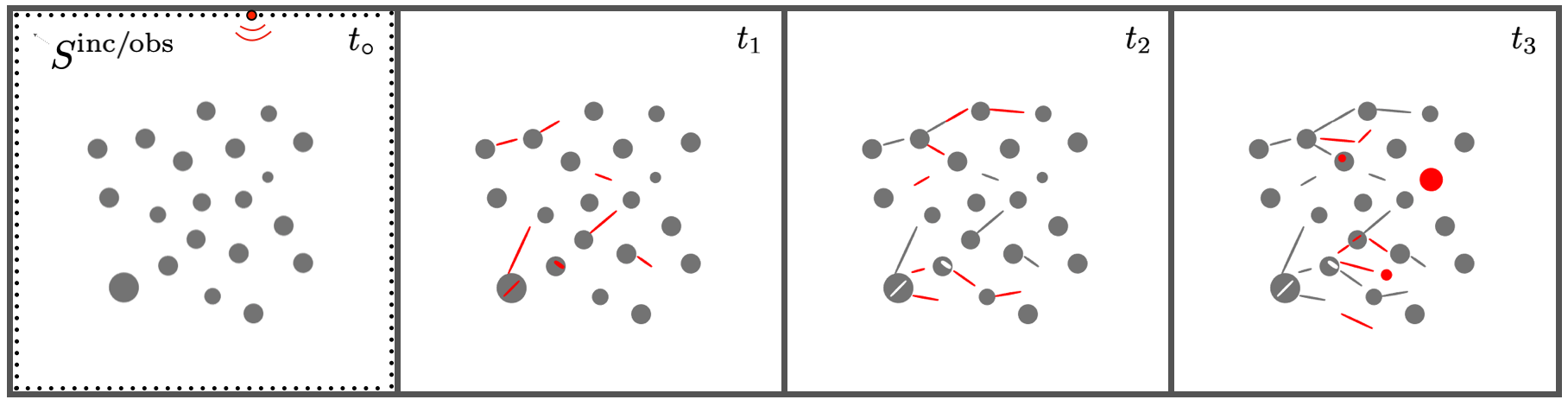}
\end{center} \vspace*{-6.5mm}
\caption{\small{Microstructural geometry of a composite slab with evolving heterogeneities and discontinuities at four sensing steps $t_\circ \!- t_3$. Elastic (in-plane) waves are periodically generated via boundary excitations on $S^\text{inc}$, and the affiliated scattered waveforms are computed over the observation surface $S^\text{obs}$. Here, $S^\text{inc} = S^\text{obs}$ is sampled at $10^3$ points around the model perimeter.}}
\label{config}\vspace*{0mm}
\end{figure}

\begin{figure}[h!]
\vspace*{-0mm}
\begin{center} 
\includegraphics[width=0.65\linewidth]{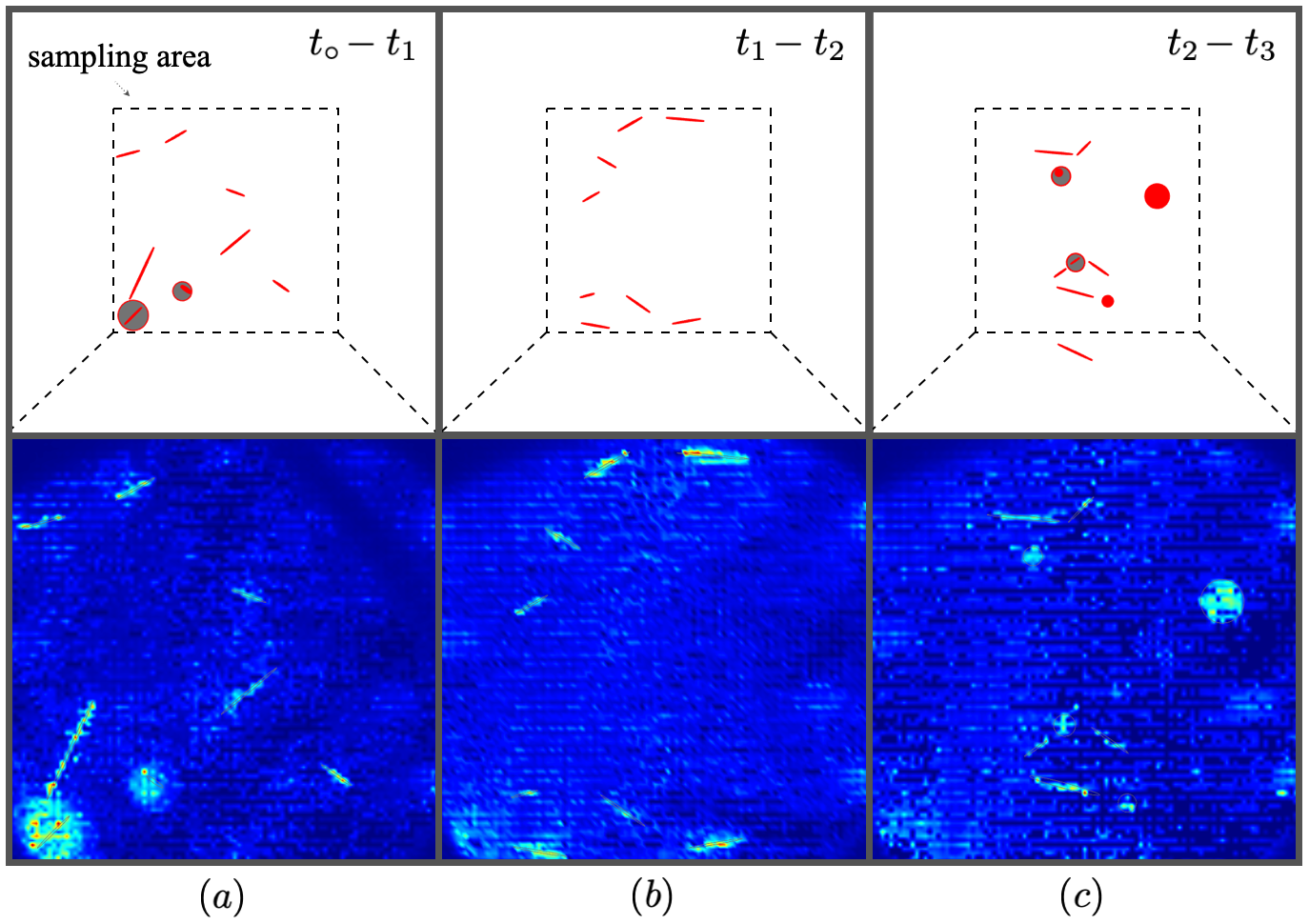}
\end{center} \vspace*{-6.5mm}
\caption{\small{Three-step reconstruction of elastic and interfacial transformations (a-c) of the initial configuration shown in Fig.~\ref{config}~(a):~(top) evolution geometry in the sensing sequence $[t_{i-1} \,\, t_i]$, $i = \lbrace 1, 2, 3 \rbrace$, and~(bottom)~the affiliated indicator map $\mathfrak{D}_i$ of~\eqref{EIF} computed from the observed scattered field data $\bv\obs_{i-1}$ and $\bv\obs_{i}$.}}
\label{recons}\vspace*{-0mm}
\end{figure}

\begin{figure}[h!]
\vspace*{-0mm}
\begin{center} 
\includegraphics[width=0.95\linewidth]{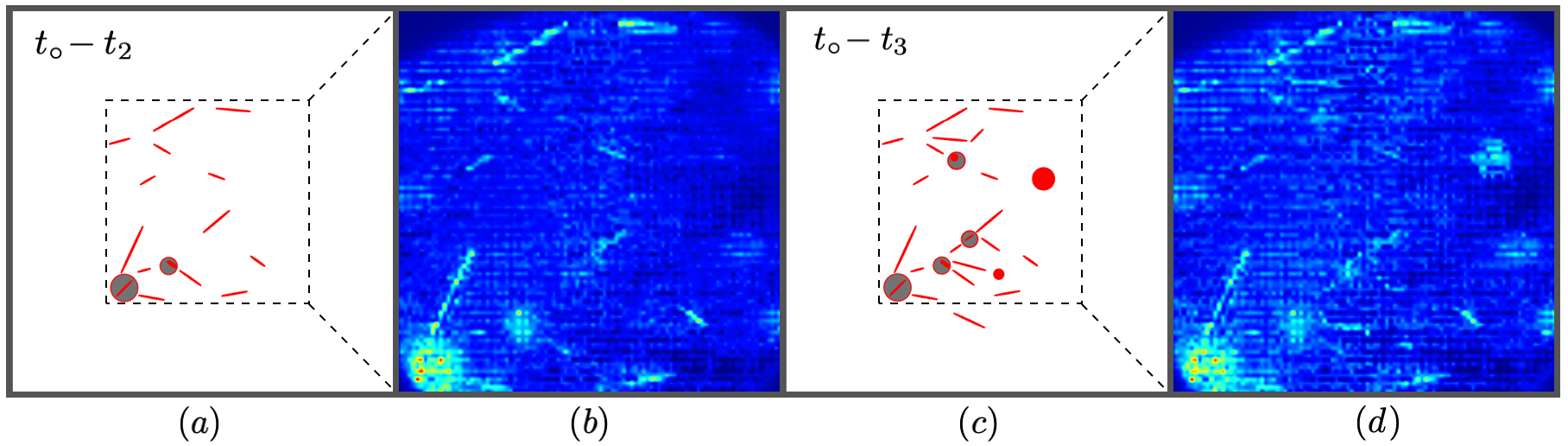}
\end{center} \vspace*{-6.5mm}
\caption{\small{Multi-step reconstruction of elastic and geometric variations:~(a), (c)~true evolution support between $[t_{\circ} \,\, t_2]$ and $[t_{\circ} \,\, t_3]$ respectively, and (b), (d) the associated (superimposed) differential maps.}}
\label{compound}\vspace*{-6mm}
\end{figure} 

Fig.~\ref{recons} illustrates the successive evolution indicators $\mathfrak{D}_j$, $j = \lbrace 1,2,3 \rbrace$, over the sampling area. For each time window $t_{j-1}\nxs- t_{j}$, the "true" support of elastic variations is provided in the top row. Keep in mind that in any sequence, $\mathfrak{D}_j$ is by design insensitive to the scatterers at $t_{j-1}$ provided that they remain unchanged by $t_{j}$. The differential maps within $t_\circ \nxs- t_2$ feature relatively sharp localizations with minimal artifacts which, given the shear wavelength, may be attributed to the rather sparse distribution of scatterers in this timeframe. Some artifacts emerge in the $\mathfrak{D}_3$ map, however, since the elastic variations between $t_2 \nxs- t_3$ occur within a densely packed network of fractures and inclusions which are assumed to be unknown. To recover the evolution support over an extended timeframe as the microstructure becomes progressively complex, one may superimpose the consecutive reconstructions (of Fig.~\ref{recons}) as in Fig.~\ref{compound}.

\section{Conclusion}

This study furnishes the theoretical foundation for differential evolution indicators for ultrasonic imaging of elastic variations within a heterogeneous and discontinuous background of random structure. In this vein, the wellposedness conditions for the forward and inverse scattering problems are established; in light of which, the pairwise relation between scattering solutions -- associated with distinct datasets, is determined. For this purpose, twelve scenarios for microstructural transitions are investigated including (a) fracturing of inclusions, (b) evolution of discontinuity surfaces within each material component or at bimaterial interfaces, (c) elastic transformation and/or expansion of fractured inclusions, (d) conversion of microcracked damage zones, and (e) expansion of pores. In all cases, it is shown that certain measures of the synthetic incident fields constructed based on the scattering solutions are, in the limit, equivalent at the loci of unknown scatterers which remain both geometrically and mechanically invariant between a given pair of time steps. This allows for exclusive reconstruction of evolved features without the knowledge (or need for recovery) of stationary components in the background. This is particularly useful in uncertain environments as showcased by the synthetic experiments -- provided that the illuminating wavelength is sufficiently smaller than the relevant microstructural length scales e.g., pairwise distance between the scatterers. Relaxing such constraints may be possible through time-domain inversion as a potential direction in future studies.               

\section{Acknowledgements}
The corresponding author kindly acknowledges the support provided by the National Science Foundation (Grant No.~1944812). This work utilized resources from the University of Colorado Boulder Research Computing Group, which is supported by the National Science Foundation (awards ACI-1532235 and ACI-1532236), the University of Colorado Boulder, and Colorado State University.

\appendix

\section{Wellposedness of the direct scattering problem}


Observe that $\forall\bv' \in H^1({\mathcal{B}}_\kappa^{-}\! \cup\nxs \mD^\star_\kappa \!\setminus\!  \overline{\Gamma_{\kappa}})^3$, the variational form of~\eqref{vk} reads  
\beq\lb{Wik-GE}
\begin{aligned}
  \int_{{\mathcal{B}}_\kappa^-} & \big[   \nabla \exs \bar{\bv}' \colon\!\xxs \bC \exs\colon\! \nabla \bv^\kappa \exs-\,  \rho \exs  \omega^2 \exs \bar{\bv}' \xxs\sip\exs \bv^\kappa  \big] \, \textrm{d}V_{\bxi} ~+\, \int_{\Gamma_\kappa} \llbracket \bar{\bv}' \rrbracket \xxs\sip\xxs \bK_{\nxs\kappa} \xxs \llbracket \bv^\kappa \rrbracket  \, \textrm{d}S_{\bxi} \,~+\, \\*[0.2mm]  
+\,\, & \int_{\mD^\star_\kappa  \setminus  \overline{\Gamma_{\kappa}}\exs} \big[  \nabla \exs \bar{\bv}' \colon\nxs \bC_\kappa \exs\colon\! \nabla \bv^\kappa \exs-\, \rho_\kappa \omega^2 \exs \bar{\bv}' \xxs\sip\exs {\bv}^\kappa  \big] \, \textrm{d}V_{\bxi}   \,~=~\,  \int_{\partial \mD^o_\kappa\!} \nxs \bar{\bv}' \xxs\sip\exs\xxs \bt^{\textrm{f}} \, \textrm{d}S_{\bxi} \,~+\,  \\*[0.2mm]  
 +\,\, & \int_{\mD^\star_\kappa  \setminus  \overline{\Gamma_{\kappa}}\exs} \big[ (\rho_\kappa - \rho)\omega^2 \exs \bar{\bv}' \xxs\sip\exs \bu^{\textrm{f}}  \,-\,  \nabla \exs \bar{\bv}' \colon\nxs (\bC_\kappa \nxs- \bC\xxs) \exs\colon\! \nabla \bu^{\textrm{f}} \big] \, \textrm{d}V_{\bxi}   ~+\, \int_{\Gamma_\kappa} \llbracket \bar{\bv}' \rrbracket \xxs\sip\exs\xxs \bt^{\textrm{f}}  \, \textrm{d}S_{\bxi} .   
\end{aligned}
\eeq

The sesquilinear form on the left-hand side of~\eqref{Wik-GE} may be decomposed as\\ $A(\bv^\kappa, \bv') + B(\bv^\kappa, \bv')$ where
\beq\lb{AB-GE}
\begin{aligned}
 & A(\bv^\kappa, \bv')  ~=~  \int_{\mD^\star_\kappa  \setminus  \overline{\Gamma_{\kappa}}\exs} \big[  \nabla \exs \bar{\bv}' \colon\nxs \bC_\kappa \exs\colon\! \nabla \bv^\kappa +\exs \bar{\bv}' \xxs\sip\exs {\bv}^\kappa  \big] \, \textrm{d}V_{\bxi}  ~+\, \int_{{\mathcal{B}}_\kappa^-}  \big[   \nabla \exs \bar{\bv}' \colon\!\xxs \bC \exs\colon\! \nabla \bv^\kappa +\, \bar{\bv}' \xxs\sip\exs \bv^\kappa  \big] \, \textrm{d}V_{\bxi}, \\*[0.2mm]  
& B(\bv^\kappa, \bv')  ~=~ -(1 + \rho_\kappa \omega^2) \int_{\mD^\star_\kappa  \setminus  \overline{\Gamma_{\kappa}}} \bar{\bv}' \xxs\sip\exs {\bv}^\kappa   \, \textrm{d}V_{\bxi} ~-\, (1 +  \rho\xxs  \omega^2) \int_{{\mathcal{B}}_\kappa^-} \bar{\bv}' \xxs\sip\exs \bv^\kappa  \, \textrm{d}V_{\bxi} \,~+\,\\*[0.2mm]  
& \hspace{33mm} \,+\,  \int_{\Gamma_\kappa} \llbracket \bar{\bv}' \rrbracket \xxs\sip\xxs \bK_{\nxs\kappa} \llbracket \bv^\kappa \rrbracket  \, \textrm{d}S_{\bxi}, \qquad \forall\bv' \in H^1({\mathcal{B}}_\kappa^{-}\! \cup\nxs \mD^\star_\kappa \!\setminus\!  \overline{\Gamma_{\kappa}})^3.
\end{aligned}
\eeq

In light of the Korn inequality~\cite{Fate2017}, observe that $A(\bv^\kappa, \bv')$ is coercive. Moreover, the antilinear form $B(\bv^\kappa, \bv')$ is compact by the application of Cauchy--Schwarz inequality to $|B(\bv^\kappa, \bv')|$, the compact embedding of $H^1({\mathcal{B}}_\kappa^{-}\! \cup\nxs \mD^\star_\kappa \!\setminus\!  \overline{\Gamma_{\kappa}})^3$ into $L^2({\mathcal{B}}_\kappa^{-}\! \cup\nxs \mD^\star_\kappa \!\setminus\!  \overline{\Gamma_{\kappa}})^3$, and the compactness of the trace operator $\bv^\kappa \to \llbracket \bv^\kappa \rrbracket$ as a map from $H^1({\mathcal{B}}_\kappa^{-}\! \cup\nxs \mD^\star_\kappa \!\setminus\!  \overline{\Gamma_{\kappa}})^3$ to $L^2(\Gamma_\kappa)^3$ owing to the compact embedding of $\tilde{H}^{{1}/{2}}(\Gamma_\kappa)^3$ into $L^2(\Gamma_\kappa)^3$. As a result,~\eqref{vk} is of Fredholm type, and thus, is wellposed as soon as the uniqueness of a solution is guaranteed. Let $(\bu^{\textrm{f}}{|}_{\mD^\star_\kappa  \setminus  \overline{\Gamma_{\kappa}}}, \nabla \bu^{\textrm{f}}{|}_{\mD^\star_\kappa  \setminus  \overline{\Gamma_{\kappa}}}, \exs\bt[\bu^{\textrm{f}}]{|}_{\Gamma_\kappa}, \bt[\bu^{\textrm{f}}]{|}_{\partial \mD^o_\kappa}) = \bzero$, then on setting ${\bv}'  = \bv^\kappa$, observe from~\eqref{Wik-GE} that  
\[
\Im(\int_{\mD^\star_\kappa  \setminus  \overline{\Gamma_{\kappa}}\exs}   \nabla \exs \bar{\bv}^\kappa \colon\nxs \bC_\kappa \exs\colon\! \nabla \bv^\kappa \, \textrm{d}V_{\bxi} \,~+\, \int_{\Gamma_\kappa} \llbracket \bar{\bv}^\kappa \rrbracket \xxs\sip\xxs \bK_{\nxs\kappa} \llbracket \bv^\kappa \rrbracket  \, \textrm{d}S_{\bxi}) ~=~  0,
\]
implying that $\llbracket \bv^\kappa \rrbracket = \bzero$ on ${\Gamma_\kappa}$, and $\bv^\kappa \nxs=\xxs \bzero$ in ${\mD^\star_\kappa  \!\setminus\!  \overline{\Gamma_{\kappa}}\exs}$ owing to Assumption~\ref{CP} and the first of~\eqref{vk}. Note that the jump in $\bv^\kappa\nxs$ vanishes not only on the intersection ${\Gamma_\kappa} \cap \partial \mD^\star_\kappa$, but also on the perfectly continuous interface $\partial \mD^\star_\kappa \!\setminus\! \overline{\Gamma_\kappa}$ according to the fourth of~\eqref{vk}. Thus, the Holmgren's theorem implies that the scattered field $\bv^\kappa\nxs$ vanishes in an open neighborhood of $\partial \mD^\star_\kappa$ which by virtue of the unique continuation theorem leads to $\bv^\kappa(\bxi) = \bzero$ in $\bxi \in {\mathcal{B}} \exs \backslash \overline{\mD^o_\kappa} $. This completes the proof for the uniqueness of a scattering solution in ${\mathcal{B}}_\kappa^{-}\! \cup\nxs \mD^\star_\kappa \!\setminus\!  \overline{\Gamma_{\kappa}}$, and thus, substantiates the wellposedness of the forward problem.

\section{Proof of Lemma~\ref{T-FI0}}\label{SFS}

Let $\hat{\boldsymbol{\sf v}}$ (\emph{resp}.~${\boldsymbol{\sf v}}'$) satisfies~\eqref{vk} for 
\[
\boldsymbol{X} \,=\, (\hat{\boldsymbol{\sf u}} {|}_{\mD^\star_\kappa  \setminus  \overline{\Gamma_{\kappa}}},\nxs \nabla \hat{\boldsymbol{\sf u}} {|}_{\mD^\star_\kappa  \setminus  \overline{\Gamma_{\kappa}}}, \bt[\hat{\boldsymbol{\sf u}}]{|}_{\Gamma_\kappa \cap \exs\overline{\mD^\star_\kappa}} \xxs\oplus\xxs {\boldsymbol{\psi}}, {\boldsymbol{\phi}}), \,\, ({\boldsymbol{\psi}}, {\boldsymbol{\phi}}) \in  H^{-1/2}(\Gamma_\kappa \nxs\!\setminus\! \overline{\mD^\star_\kappa})^3) \nxs\times\nxs {H^{-1/2}(\partial \mD^o_\kappa)^3}, 
\]
\big(\emph{resp.}
\[
\boldsymbol{X}' \,=\, ({\boldsymbol{\sf u}'} {|}_{\mD^\star_\kappa  \setminus  \overline{\Gamma_{\kappa}}},\nxs \nabla {\boldsymbol{\sf u}'} {|}_{\mD^\star_\kappa  \setminus  \overline{\Gamma_{\kappa}}}, \bt[{\boldsymbol{\sf u}'}]{|}_{\Gamma_\kappa \cap \exs\overline{\mD^\star_\kappa}} \xxs\oplus\xxs {\boldsymbol{\psi}'}, {\boldsymbol{\phi}'}), \,\, ({\boldsymbol{\psi}'}, {\boldsymbol{\phi}'}) \in  H^{-1/2}(\Gamma_\kappa \nxs\!\setminus\! \overline{\mD^\star_\kappa})^3) \nxs\times\nxs {H^{-1/2}(\partial \mD^o_\kappa)^3}\big), 
\]
wherein $\nabla \exs\sip\exs \bC \exs \colon \! \nabla \hat{\boldsymbol{\sf u}} \,+ \rho \exs \omega^2 \hat{\boldsymbol{\sf u}} ~=~ \bzero$ (\emph{resp}.~$\nabla \exs\sip\exs \bC \exs \colon \! \nabla {\boldsymbol{\sf u}'} \,+ \rho \exs \omega^2 {\boldsymbol{\sf u}'} ~=~ \bzero$) in $\mD^\star_\kappa$, then 
\beq\lb{SeQ4}
\begin{aligned}
&\langle \xxs{T_\kappa{\boldsymbol{X}},{\boldsymbol{X}'}} \rangle \,=\, -\int_{\mD^\star_\kappa  \setminus  \overline{\Gamma_{\kappa}}} \big[\xxs \nabla \bar{\boldsymbol{\sf u}}' \colon \! (\bC_{\kappa\nxs}-\bC) \xxs\colon \! \nabla (\hat{\boldsymbol{\sf u}} \xxs+\xxs\hat{\boldsymbol{\sf v}}) \xxs + \exs \omega^2(\rho -\rho_\kappa) \xxs\bar{\boldsymbol{\sf u}}' \!\cdot\nxs (\hat{\boldsymbol{\sf u}} \xxs+\xxs\hat{\boldsymbol{\sf v}})  \xxs \big]  \textrm{d}V ~+\,  \\*[0.2mm]   
&   \int_{\Gamma_\kappa \cap \exs\overline{\mD^\star_\kappa}} \bar{\bt}[\xxs{\boldsymbol{\sf u}}'] \nxs\cdot \llbracket \xxs\hat{\boldsymbol{\sf v}}\xxs \rrbracket  \, \textrm{d}S  \,+\, \int_{\Gamma_\kappa \nxs\!\setminus \overline{\mD^\star_\kappa}} \bar{\boldsymbol{\psi}}' \nxs\cdot \llbracket \xxs\hat{\boldsymbol{\sf v}}\xxs \rrbracket \, \textrm{d}S \,+\, \int_{\partial \mD^o_\kappa} \bar{\boldsymbol{\phi}}' \nxs \cdot (\hat{\boldsymbol{\sf u}}_{\boldsymbol{\phi}} \xxs+\xxs\hat{\boldsymbol{\sf v}}) \, \textrm{d}S,
\end{aligned}
\eeq 
where $\hat{\boldsymbol{\sf u}}_{\boldsymbol{\phi}}$ satisfies~\eqref{uphi} with ${\boldsymbol{\phi}}^n = {\boldsymbol{\phi}}$.
In addition, the variational form~\eqref{Wik-GE} with $\bv^\kappa=\hat{\boldsymbol{\sf v}}$ and $\bv' ={\boldsymbol{\sf v}}'$ reads
\beq\lb{Wik-GE*}
\begin{aligned}
& \int_{\mD^\star_\kappa  \setminus  \overline{\Gamma_{\kappa}}} \big[\xxs \nabla \bar{{\boldsymbol{\sf v}}}' \colon \! (\bC_{\kappa\nxs}-\bC) \xxs\colon \! \nabla (\hat{\boldsymbol{\sf u}} \xxs+\xxs\hat{\boldsymbol{\sf v}}) \xxs + \exs \omega^2(\rho -\rho_\kappa)\xxs \bar{{\boldsymbol{\sf v}}}' \!\cdot\nxs (\hat{\boldsymbol{\sf u}} \xxs+\xxs\hat{\boldsymbol{\sf v}})  \xxs \big]  \textrm{d}V   ~-\, \\*[0.2mm]  
-&   \int_{\Gamma_\kappa \cap \exs\overline{\mD^\star_\kappa}} {\bt}[\xxs \hat{\boldsymbol{\sf u}} \xxs] \cdot \llbracket \exs \bar{{\boldsymbol{\sf v}}}' \rrbracket  \, \textrm{d}S  \,-\, \int_{\Gamma_\kappa \nxs\!\setminus \overline{\mD^\star_\kappa}} \boldsymbol{\psi} \nxs\cdot \llbracket \exs\bar{{\boldsymbol{\sf v}}}' \rrbracket \, \textrm{d}S \,-\, \int_{\partial \mD^o_\kappa} \boldsymbol{\phi} \nxs \cdot \bar{{\boldsymbol{\sf v}}}' \, \textrm{d}S ~=\, \\*[0.2mm]  
 -& \int_{{\mathcal{B}}_\kappa^- \cup\xxs  \mD^\star_\kappa\nxs  \setminus  \overline{\Gamma_{\kappa}}}  \big[   \nabla \exs \bar{{\boldsymbol{\sf v}}}' \colon\!\xxs \bC \exs\colon\! \nabla \hat{\boldsymbol{\sf v}} \exs-\,  \rho \xxs  \omega^2 \exs \bar{{\boldsymbol{\sf v}}}' \sip\exs \hat{\boldsymbol{\sf v}}  \big] \, \textrm{d}V ~-\, \int_{\Gamma_\kappa} \llbracket \exs\bar{{\boldsymbol{\sf v}}}' \rrbracket \xxs\sip\xxs \bK_{\nxs\kappa} \xxs \llbracket \xxs \hat{\boldsymbol{\sf v}} \xxs \rrbracket  \, \textrm{d}S. 
\end{aligned}
\eeq

Subtracting~\eqref{SeQ4} from~\eqref{Wik-GE*}, one finds 
\beq\lb{-TXX}
\begin{aligned}
- \langle \xxs{T_\kappa{\boldsymbol{X}},{\boldsymbol{X}'}} \rangle ~=~ & \int_{\mD^\star_\kappa  \setminus  \overline{\Gamma_{\kappa}}} \xxs \nabla (\bar{{\boldsymbol{\sf u}}}'  \nxs+\xxs \bar{{\boldsymbol{\sf v}}}') \colon \! (\bC_{\kappa\nxs}-\bC) \xxs\colon \! \nabla (\hat{\boldsymbol{\sf u}} \xxs+\xxs\hat{\boldsymbol{\sf v}}) \exs \textrm{d}V ~+\, \int_{{\mathcal{B}}_\kappa^- \cup\xxs  \mD^\star_\kappa\nxs  \setminus  \overline{\Gamma_{\kappa}}}  \nabla \exs \bar{{\boldsymbol{\sf v}}}' \colon\!\xxs \bC \exs\colon\! \nabla \hat{\boldsymbol{\sf v}} \exs \textrm{d}V ~-\, \\*[0.2mm]  
-~&   \int_{\Gamma_\kappa \cap \exs\overline{\mD^\star_\kappa}} \big{[} \xxs {\bt}[\xxs \hat{\boldsymbol{\sf u}} \xxs] \cdot \llbracket \exs \bar{{\boldsymbol{\sf v}}}' \rrbracket \xxs+\xxs \bar{\bt}[\xxs{\boldsymbol{\sf u}}'] \nxs\cdot \llbracket \xxs\hat{\boldsymbol{\sf v}}\xxs \rrbracket \xxs \big{]}  \xxs \textrm{d}S  ~-\, \int_{\Gamma_\kappa \nxs\!\setminus \overline{\mD^\star_\kappa}} \big{[} \xxs \boldsymbol{\psi} \nxs\cdot \llbracket \exs\bar{{\boldsymbol{\sf v}}}' \rrbracket \xxs+\xxs\bar{\boldsymbol{\psi}}' \nxs\cdot \llbracket \xxs\hat{\boldsymbol{\sf v}}\xxs \rrbracket \xxs \big{]} \xxs \textrm{d}S ~-\, \\*[0.2mm]  
-~& \int_{\partial \mD^o_\kappa}\big{[} \xxs  \boldsymbol{\phi} \nxs \cdot \bar{{\boldsymbol{\sf v}}}'  \xxs+\xxs \bar{\boldsymbol{\phi}}' \nxs \cdot \hat{\boldsymbol{\sf v}} \xxs+\xxs \bar{\boldsymbol{\phi}}' \nxs \cdot \hat{\boldsymbol{\sf u}}_{\boldsymbol{\phi}} \xxs \big{]} \xxs \textrm{d}S ~+\, \int_{\Gamma_\kappa} \llbracket \exs\bar{{\boldsymbol{\sf v}}}' \rrbracket \xxs\sip\xxs \bK_{\nxs\kappa} \xxs \llbracket \xxs \hat{\boldsymbol{\sf v}} \xxs \rrbracket   \, \textrm{d}S ~+\,  \\*[0.2mm]  
+~& \int_{\mD^\star_\kappa  \setminus  \overline{\Gamma_{\kappa}}} \xxs \omega^2(\rho -\rho_\kappa)\xxs (\bar{{\boldsymbol{\sf u}}}'  \nxs+\xxs \bar{{\boldsymbol{\sf v}}}') \!\cdot\nxs (\hat{\boldsymbol{\sf u}} \xxs+\xxs\hat{\boldsymbol{\sf v}})  \exs   \textrm{d}V \exs-\, \int_{{\mathcal{B}}_\kappa^- \cup\xxs  \mD^\star_\kappa\nxs  \setminus  \overline{\Gamma_{\kappa}}} \rho \xxs  \omega^2 \exs \bar{{\boldsymbol{\sf v}}}' \sip\exs \hat{\boldsymbol{\sf v}} \exs \textrm{d}V, 
\end{aligned}
\eeq

On the other hand, adding~\eqref{SeQ4} to~\eqref{Wik-GE*}, the result can be recast as 
\beq\lb{+TXX}
\begin{aligned}
&\langle \xxs{T_\kappa{\boldsymbol{X}},{\boldsymbol{X}'}} \rangle \,=\, \int_{\mD^\star_\kappa  \setminus  \overline{\Gamma_{\kappa}}}  \nabla \bar{\boldsymbol{\sf u}}'  \nxs \colon \! (\bC-\bC_{\kappa\nxs}) \xxs\colon \!\nxs \nabla \hat{\boldsymbol{\sf u}} \, \textrm{d}V ~+\, \int_{\mD^\star_\kappa  \setminus  \overline{\Gamma_{\kappa}}}  \nabla \bar{{\boldsymbol{\sf v}}}' \colon \nxs \bC_{\kappa} \xxs\colon \! \nabla \hat{\boldsymbol{\sf v}} \, \textrm{d}V ~+ \exs  \\*[0.2mm]   
& \int_{\mD^\star_\kappa  \setminus  \overline{\Gamma_{\kappa}}} \xxs \big{[} \nabla \bar{\boldsymbol{\sf u}}' \colon \! (\bC-\bC_{\kappa\nxs}) \xxs\colon \! \nabla \hat{\boldsymbol{\sf v}} \, \textrm{d}V \xxs-\xxs  \nabla \bar{{\boldsymbol{\sf v}}}' \colon \! (\bC-\bC_{\kappa\nxs}) \xxs\colon \! \nabla \hat{\boldsymbol{\sf u}} \xxs \big{]} \, \textrm{d}V ~+\,  \int_{{\mathcal{B}}_\kappa^-} \nabla \exs \bar{{\boldsymbol{\sf v}}}' \colon\!\xxs \bC \exs\colon\! \nabla \hat{\boldsymbol{\sf v}} \, \textrm{d}V ~+ \exs  \\*[0.2mm]   
+&   \int_{\Gamma_\kappa \cap \exs\overline{\mD^\star_\kappa}} \big{[} \xxs \bar{\bt}[\xxs{\boldsymbol{\sf u}}'] \nxs\cdot \llbracket \xxs\hat{\boldsymbol{\sf v}}\xxs \rrbracket \xxs - \xxs {\bt}[\xxs \hat{\boldsymbol{\sf u}} \xxs] \cdot \llbracket \exs \bar{{\boldsymbol{\sf v}}}' \rrbracket \xxs \big{]}  \, \textrm{d}S  \,+\, \int_{\Gamma_\kappa \nxs\!\setminus \overline{\mD^\star_\kappa}} \big{[} \xxs \bar{\boldsymbol{\psi}}' \nxs\cdot \llbracket \xxs\hat{\boldsymbol{\sf v}}\xxs \rrbracket \xxs - \xxs \boldsymbol{\psi} \nxs\cdot \llbracket \exs\bar{{\boldsymbol{\sf v}}}' \rrbracket \xxs \big{]} \, \textrm{d}S \,+\, \\*[0.2mm] 
+&  \int_{\partial \mD^o_\kappa} \big{[} \xxs \bar{\boldsymbol{\phi}}' \nxs \cdot (\hat{\boldsymbol{\sf u}}_{\boldsymbol{\phi}} \xxs+\xxs\hat{\boldsymbol{\sf v}}) \xxs - \xxs \boldsymbol{\phi} \nxs \cdot \bar{{\boldsymbol{\sf v}}}' \xxs \big{]} \, \textrm{d}S ~+\, \int_{\Gamma_\kappa} \llbracket \exs\bar{{\boldsymbol{\sf v}}}' \rrbracket \xxs\sip\xxs \bK_{\nxs\kappa} \xxs \llbracket \xxs \hat{\boldsymbol{\sf v}} \xxs \rrbracket  \, \textrm{d}S ~-\,  \\*[0.2mm]   
-&\int_{\mD^\star_\kappa  \setminus  \overline{\Gamma_{\kappa}}} \omega^2(\rho -\rho_\kappa) \xxs(\bar{\boldsymbol{\sf u}}' \nxs-\xxs \bar{{\boldsymbol{\sf v}}}') \!\cdot\nxs (\hat{\boldsymbol{\sf u}} \xxs+\xxs\hat{\boldsymbol{\sf v}})  ~-\, \int_{{\mathcal{B}}_\kappa^- \cup\xxs  \mD^\star_\kappa\nxs  \setminus  \overline{\Gamma_{\kappa}}}   \rho \xxs  \omega^2 \exs \bar{{\boldsymbol{\sf v}}}' \sip\exs \hat{\boldsymbol{\sf v}}  \, \textrm{d}V.
\end{aligned}
\eeq 
In light of~\eqref{-TXX} and~\eqref{+TXX}, define $T_\circ^\pm\!\colon {\mathfrak{S}}(H_{\Delta}) \rightarrow \, \tilde{S}(\mD^\star_\kappa \cup \Gamma_\kappa \nxs \cup \partial \mD^o_\kappa)$ such that 
\beq\lb{T0XX}
\begin{aligned}
- \langle \xxs{T_\circ^-{\boldsymbol{X}},{\boldsymbol{X}'}} \rangle ~=~ & \int_{\mD^\star_\kappa  \setminus  \overline{\Gamma_{\kappa}}} \xxs \nabla (\bar{{\boldsymbol{\sf u}}}'  \nxs+\xxs \bar{{\boldsymbol{\sf v}}}') \colon \! (\bC_{\kappa\nxs}-\bC) \xxs\colon \! \nabla (\hat{\boldsymbol{\sf u}} \xxs+\xxs\hat{\boldsymbol{\sf v}}) \exs \textrm{d}V ~+\, \\*[0.2mm]  
+~&\int_{{\mathcal{B}}_\kappa^- \cup\xxs  \mD^\star_\kappa\nxs  \setminus  \overline{\Gamma_{\kappa}}}  \nabla \exs \bar{{\boldsymbol{\sf v}}}' \colon\!\xxs \bC \exs\colon\! \nabla \hat{\boldsymbol{\sf v}} \exs \textrm{d}V \,+\, \int_{\mD^\star_\kappa\nxs  \setminus  \overline{\Gamma_{\kappa}}}  \bar{{\boldsymbol{\sf u}}}' \sip\exs\xxs \hat{\boldsymbol{\sf u}}  \, \textrm{d}V, \\*[1.2mm]  
\langle \xxs{T_\circ^+{\boldsymbol{X}},{\boldsymbol{X}'}} \rangle ~=~ &\int_{\mD^\star_\kappa  \setminus  \overline{\Gamma_{\kappa}}}  \nabla \bar{\boldsymbol{\sf u}}'  \nxs \colon \! (\bC-\bC_{\kappa\nxs}) \xxs\colon \!\nxs \nabla \hat{\boldsymbol{\sf u}} \, \textrm{d}V ~+\, \int_{\mD^\star_\kappa  \setminus  \overline{\Gamma_{\kappa}}}  \nabla \bar{{\boldsymbol{\sf v}}}' \colon \nxs \bC_{\kappa} \xxs\colon \! \nabla \hat{\boldsymbol{\sf v}} \, \textrm{d}V ~+ \exs  \\*[0.2mm]  
\end{aligned}
\eeq 
\beq
\begin{aligned}
+~& \int_{\mD^\star_\kappa  \setminus  \overline{\Gamma_{\kappa}}} \xxs \big{[} \nabla \bar{\boldsymbol{\sf u}}' \colon \! (\bC-\bC_{\kappa\nxs}) \xxs\colon \! \nabla \hat{\boldsymbol{\sf v}} \, \textrm{d}V \xxs-\xxs  \nabla \bar{{\boldsymbol{\sf v}}}' \colon \! (\bC-\bC_{\kappa\nxs}) \xxs\colon \! \nabla \hat{\boldsymbol{\sf u}} \xxs \big{]} \, \textrm{d}V ~+\, \\*[0.2mm] 
+~& \int_{{\mathcal{B}}_\kappa^-} \nabla \exs \bar{{\boldsymbol{\sf v}}}' \colon\!\xxs \bC \exs\colon\! \nabla \hat{\boldsymbol{\sf v}} \, \textrm{d}V \,+\, \int_{\mD^\star_\kappa\nxs  \setminus  \overline{\Gamma_{\kappa}}}  \bar{{\boldsymbol{\sf u}}}' \sip\exs\xxs \hat{\boldsymbol{\sf u}}  \, \textrm{d}V.
\end{aligned}
\eeq
Given Assumption~\ref{ITPA}, it is evident that $\mathfrak{R}(e^{i\theta} T_\circ^-)$ is coercive on ${\mathfrak{S}}(H_{\Delta})$ for $\theta = [0 \,\, \pi/2)$ provided that the first of Assumption~\ref{Tass} holds. In the second case of the latter, however, one may show that $\mathfrak{R}(T_\circ^+)$ is coercive on ${\mathfrak{S}}(H_{\Delta})$ by following the argument used in the proof of~\cite[Theorem 2.47]{cako2016}. Now, by deploying the Rellich compact embeddings along with the regularity of the trace operator, one concludes that   
\[
T_\text{c}  ~\colon\!\! =~ \mathfrak{R}(T_\kappa \xxs-\xxs T_\circ^\pm) \colon {\mathfrak{S}}(H_{\Delta}) \rightarrow \, \tilde{S}(\mD^\star_\kappa \cup \Gamma_\kappa \nxs \cup \partial \mD^o_\kappa)
\]
 is compact.

\bibliography{inverse,composites}

\end{document}